\providecommand {\norm}[1] {\lVert#1\rVert}
\providecommand {\abs}[1] {\lvert#1\rvert}
\providecommand {\inprod}[1]{\langle #1 \rangle}
\providecommand {\set}[1]{\lbrace #1 \rbrace}
\newcommand {\bbN} {\ensuremath{\mathbb{N}}}
\newcommand {\bbR} {\ensuremath{\mathbb{R}}}
\newcommand {\bbZ} {\ensuremath{\mathbb{Z}}}
\newcommand {\bbT} {\ensuremath{\mathbb{T}}}
\newcommand {\bbC} {\ensuremath{\mathbb{C}}}
\newcommand {\bbNd} {\ensuremath{\mathbb{N}^d}}
\newcommand {\bbRd} {\ensuremath{\mathbb{R}^d}}
\newcommand {\bbTd} {\ensuremath{\mathbb{T}^d}}
\newcommand {\bbCd} {\ensuremath{\mathbb{C}^d}}
\newcommand {\bbZd}[1] [d] {\ensuremath{\bbZ^{#1}}}
\newcommand {\mA} {\ensuremath{\mathcal{A}}}
\newcommand {\mD} {\ensuremath{\mathcal{D}}}
\newcommand {\mE} {\ensuremath{\mathcal{E}}}
\newcommand {\mF} {\ensuremath{\mathcal{F}}}
\newcommand {\mJ} {\ensuremath{\mathcal{J}}}
{
\newcommand {\mX} {\ensuremath{\mathcal{X}}}

\newcommand {\mM} {\ensuremath{\mathcal{M}}}

\newcommand {\mB} {\ensuremath{\mathcal{B}}}

\newcommand {\mC} {\ensuremath{\mathcal{C}}}
\newcommand {\mS} {\ensuremath{\mathcal{S}}}
\newcommand {\mT} {\ensuremath{\mathcal{T}}}

%





\newcommand {\bopzd}  {\ensuremath{\mathcal B \bigl( \ell^2(\bbZd)\bigr)}}
\newcommand {\bop}  {\ensuremath{\mathcal B ( \ell^2)}}
\newcommand {\bopn}  {\ensuremath{\mathcal B ( \ell^2(\bbN))}}

\DeclareMathOperator{\Lip}{Lip}

\DeclareMathOperator{\supp}{supp}
\DeclareMathOperator{\spann}{span}
\DeclareMathOperator{\dd}{\mathrm{d}}

\DeclareMathOperator{\spec}{spec}

\newcommand \AT {approximation theory}

\newcommand \BL {bandlimited}
\newcommand \tfae {The following are equivalent:}

\newcommand \cont {continuous}
\newcommand \BS {Banach space}
\newcommand \BA {Banach algebra}
\newcommand \MA {matrix algebra}

\newcommand \odd {off-diagonal decay}

\newcommand \IC {inverse-closed}

\newcommand \HZ {H\"{o}lder-Zygmund}
\newcommand \LP {Littlewood-Paley}
\newcommand \DPU {dyadic partition of unity}
\newcommand {\cexp} [1] [{x t}] {\ensuremath{e^{2 \pi i #1}}}

 \newcommand {\bigo}{{\mathcal{O}}}


\newtheorem{prop}{Proposition} [section]
\newtheorem{cor}[prop]{Corollary}
\newtheorem{thm}[prop]{Theorem}

\newtheorem{lem}[prop]{Lemma}

\theoremstyle{definition}
\newtheorem{defn}[prop]{Definition} 

\theoremstyle{remark}
\newtheorem*{rem}{Remark}
\newtheorem*{rems}{Remarks}
\newtheorem{ex}[prop]{Example}

\numberwithin{equation}{section}

\newcommand {\domd} {\mD(\delta)} 
\newcommand {\one} {\mathbf{1}}

\newcommand{\inv}{^{-1}}

\begin{document}
\title [Noncommutative approximation] {Noncommutative Approximation:\\
  Inverse-Closed Subalgebras and Off-Diagonal Decay of Matrices} \date{\today} \author{Karlheinz Gr\"ochenig}
\author{ Andreas Klotz}
\address{Faculty of Mathematics\\University of Vienna \\
  Nordbergstrasse 15\\A-1090 Vienna, AUSTRIA} \email{karlheinz.groechenig@univie.ac.at}
\email{andreas.klotz@univie.ac.at} \thanks{K.~G.~was supported by the Marie-Curie Excellence Grant MEXT-CT
  2004-517154, A.~K.~ was supported by the grant MA44 MOHAWI of the Vienna Science and Technology Fund (WWTF), 
and partially by the National Research Network S106 SISE of the
Austrian Science Foundation (FWF)}
\keywords{Banach algebra, matrix algebra, smoothness space, inverse closedness, spectral invariance, off-diagonal
  decay, automorphism group, Jackson-Bernstein theorem} \subjclass[2000]{41A65, 42A10, 47B47}

\begin{abstract}
We investigate two systematic constructions of inverse-closed
subalgebras of a given Banach algebra or operator algebra $\mA $, both of
which are inspired by classical principles of approximation theory. 
The first construction requires a closed derivation or a commutative
automorphism group on $\mA $ and yields a family of smooth 
inverse-closed subalgebras of $\mA $ that resemble the usual
H\"older-Zygmund spaces. The second construction starts
with a graded sequence of subspaces of \mA\ and yields a class of
inverse-closed subalgebras that resemble the classical approximation
spaces. We prove a theorem of Jackson-Bernstein type to show that in
certain cases both constructions are equivalent. 

These  results about abstract Banach algebras are applied to algebras
of infinite matrices with off-diagonal decay. In particular, we obtain
new and unexpected conditions of off-diagonal decay that are preserved under
matrix inversion. 
 \end{abstract}
\maketitle

\section{Introduction}

 A remarkable class of results in numerical analysis asserts that the
 off-diagonal decay of an infinite matrix is inherited by its inverse
matrix. The prototype is Jaffard's theorem~\cite{Jaffard90}: If the matrix $A$
with entries $A(k,l)$, $k,l\in \bbZ$, is invertible on $\ell ^2(\bbZ )$ and if,
 for $r>1$, 
 \begin{equation}
   \label{eq:1}
   |A(k,l)| \leq C (1+|k-l|)^{-r} \qquad \text{ for all } k,l \in \bbZ
   \, ,
 \end{equation}
then also 
$$
|(A\inv )(k,l)| \leq C (1+|k-l|)^{-r} \qquad \text{ for all } k,l \in
\bbZ \, .
$$
This result has found many variations and inspired a long line of
research. Off-diagonal decay has been modeled (a) with more general
weight functions ~\cite{Bas90,Baskakov97, Demko84,GL04a}, (b) with weighted
versions of Schur's test~\cite{GL04a},  (c) with convolution-dominated
matrices~\cite{Bas90,GKW89,Gr08,Kur90,Sjo95}, or (d) with  mixtures of
such conditions~\cite{Sun07a}.

Which forms of \odd\ are inherited by the inverse of a 
matrix? The answers  so far mix art and  hard mathematical
work. The art is  to guess  a  suitable decay
condition, the  work is then  to  prove that this
decay condition is preserved under inversion.
Our goal is more ambitious: we aim at a  systematic construction of
decay properties that are inherited by matrix inversion. 

Our  tools are borrowed from approximation theory and from the theory
of operator algebras and Banach algebras.  At first glance, these tools  have nothing to
do with the \odd\ of  infinite matrices. The appearance of operator algebras
becomes plausible when we observe that almost all known  conditions for
off-diagonal decay define Banach algebras. The appearance of
approximation theory (in the form of smoothness spaces and
approximation spaces) is perhaps more surprising. Indeed,  the connection between the
problem of \odd\ and approximation theory is one of the main insights
of this paper. 

To put our problem into an abstract setting, suppose that we are given
a Banach algebra $\mB $. We may think of $\mB $ as an algebra of
infinite matrices whose norm describes some form of \odd . We first try
to find a systematic construction of subalgebras $\mA \subseteq \mB$ such that an
element $a\in \mA $ is invertible in $\mA $ if and only if $a$ is
invertible in the larger algebra $\mB $. In the context of matrices, we think of the
smaller algebra as an algebra describing a stronger decay condition. 
Technically, we say that a unital Banach algebra $\mA $ is \emph{\IC } in
$\mB $, if $a\in \mA $ and $a^{-1} \in \mB $ implies $a^{-1} \in \mA
$. Inverse-closedness occurs under various names (spectral invariance,
Wiener pair, local subalgebra, etc.) in many fields of mathematics,
see the survey~\cite{Gro09}. While often the existence of an
inverse-closed subalgebra is taken for granted, e.g., in
non-commutative geometry~\cite{ Black98, connes}, our interest is in the systematic
construction of inverse-closed subalgebras and their application to
matrix algebras.

We present and investigate two constructions of \IC\ subalgebras of a
Banach algebra, both of which are inspired by ideas from approximation
theory. 

The first idea is the construction of ``smooth'' subalgebras via
derivations. In
its essence this idea is a generalization of the quotient rule for the
derivative. If a continuously differentiable function $f$  on an interval
does not have any zeros, then its inverse $1/f$ is again continuously
differentiable. In the abstract context of a \BA\ $\mA $ the
derivative  is replaced by an unbounded  derivation. Then  the same
algebraic 
manipulations used to  prove the quotient rule show   that the domain of a
derivation is \IC\ in the original algebra. The first result of this
type is due  to
the fundamental work of Bratteli and Robinson~\cite{bratteli75,bratteli76} and can also be found in Connes~\cite{connes}.
For the case of a matrix algebra over $\bbZ $  the relevant derivation is the
commutator with the diagonal matrix $X$ with entries $X(k,l) =
2 \pi i k\delta _{k,l}, k,l \in \bbZ $. The commutator $A \mapsto [X,A]= XA-AX$ is a derivation and
$[X,A]$ has the entries $[X,A](k,l) = 2 \pi i (k-l) A(k,l), k,l\in \bbZ
$.  Clearly, if $[X,A]$ enjoys some \odd , then $A$ itself
has a better \odd .  (Such commutators are used implicitly in Jaffard's
work~\cite{Jaffard90}.)

To demonstrate how far this idea can be pushed we formulate
explicitly a multivariate statement with anisotropic \odd\
conditions. 

\begin{thm}
 Let $A$ be a matrix over $\bbZd$, $r>d$, and $\alpha=(\alpha_1, \dots , \alpha_d)
 \in \bbZd$ with $\alpha_j\geq 0$. If $A$ is invertible on $\ell ^2(\bbZd)$
 and satisfies the anisotropic off-diagonal decay condition
 \begin{equation}
   \label{eq:ch9}
   |A(k,l)| \leq C (1+|k-l|)^{-r} \prod _{j=1}^d (1+|k_j-l_j|)^{-\alpha_j}
   \qquad k,l \in \bbZd \, ,
 \end{equation}
then the entries of the inverse matrix $A^{-1}$ satisfy an estimate of
the same type
$$
|(A^{-1}(k,l)| \leq C' (1+|k-l|)^{-r} \prod _{j=1}^d
(1+|k_j-l_j|)^{-\alpha_j} , 
   \qquad k,l \in \bbZd \, . 
$$
\end{thm}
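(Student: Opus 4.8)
The plan is to reduce the anisotropic estimate to the inverse-closedness of the domain of a family of commuting derivations, with the multivariate form of Jaffard's theorem serving as the base case. For each coordinate $j\in\{1,\dots,d\}$ let $X_j$ be the diagonal matrix with entries $X_j(k,l)=2\pi i k_j\delta_{k,l}$ and set $\delta_j(A)=[X_j,A]=X_jA-AX_j$, so that $\delta_j(A)(k,l)=2\pi i(k_j-l_j)A(k,l)$. Each $\delta_j$ is a derivation ($\delta_j(AB)=\delta_j(A)B+A\delta_j(B)$ and $\delta_j(\one)=0$), and since diagonal matrices commute, the $\delta_j$ commute with one another. Let $\mathcal J_r$ denote the Jaffard class of matrices with $\sup_{k,l}(1+|k-l|)^r|A(k,l)|<\infty$; for $r>d$ this is a Banach $*$-algebra, and it is \IC\ in \bopzd\ by the multivariate form of Jaffard's theorem.

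Next I would record the dictionary between anisotropic decay and smoothness. Writing $\delta^\beta=\delta_1^{\beta_1}\cdots\delta_d^{\beta_d}$ for a multi-index $\beta$, one has $|\delta^\beta(A)(k,l)|=(2\pi)^{|\beta|}\prod_j|k_j-l_j|^{\beta_j}\,|A(k,l)|$. Because $\alpha$ has nonnegative integer entries, the binomial identity $\prod_j(1+|k_j-l_j|)^{\alpha_j}=\sum_{\beta\le\alpha}\binom{\alpha}{\beta}\prod_j|k_j-l_j|^{\beta_j}$ shows that the anisotropic condition \eqref{eq:ch9} holds if and only if $\delta^\beta(A)\in\mathcal J_r$ for every multi-index $\beta\le\alpha$ (componentwise). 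Thus \eqref{eq:ch9} characterizes membership of $A$ in the subalgebra $\mA=\{A:\delta^\beta(A)\in\mathcal J_r\text{ for all }\beta\le\alpha\}$, and the theorem becomes the assertion that $\mA$ is \IC\ in \bopzd.

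The core step is a bootstrap in the order of smoothness. Given $A$ invertible on \liizd\ and satisfying \eqref{eq:ch9}, Jaffard's theorem first yields $A\inv\in\mathcal J_r$, which is the base case $\beta=0$. For the inductive step I would use the quotient rule $\delta_j(A\inv)=-A\inv\delta_j(A)A\inv$, obtained by applying $\delta_j$ to $AA\inv=\one$. Iterating the commuting derivations and expanding by the Leibniz rule gives, for each $\beta\le\alpha$, a finite sum
\[
\delta^\beta(A\inv)=\sum_{\text{finite}}\pm\,A\inv\,\delta^{\gamma_1}(A)\,A\inv\,\delta^{\gamma_2}(A)\cdots A\inv\,\delta^{\gamma_p}(A)\,A\inv,
\]
where each $\gamma_i\neq 0$, $\gamma_i\le\beta$, and $\sum_i\gamma_i=\beta$. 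Every factor lies in $\mathcal J_r$ — the copies of $A\inv$ by the base case, and each $\delta^{\gamma_i}(A)$ by the dictionary since $\gamma_i\le\alpha$ — so, $\mathcal J_r$ being a Banach algebra, each summand and hence $\delta^\beta(A\inv)$ lies in $\mathcal J_r$. As this holds for all $\beta\le\alpha$, we conclude $A\inv\in\mA$, that is, $A\inv$ obeys the anisotropic estimate.

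The main obstacle I anticipate is not the combinatorics of the Leibniz expansion but the verification that the norm estimates close up: one needs $r>d$ precisely so that $\mathcal J_r$ is submultiplicative (the convolution of $(1+|\cdot|)^{-r}$ with itself is dominated by $(1+|\cdot|)^{-r}$ only when $r>d$), which is exactly what forces every product in the displayed sum back into $\mathcal J_r$ with a controlled constant. Tracking the final constant $C'$ through the bounded, $\alpha$-dependent number of summands and the operator norm of $A\inv$ is then routine.
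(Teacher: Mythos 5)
Your overall architecture coincides with the paper's: the theorem is proved there as Proposition~\ref{jaffaniso}, by observing that condition \eqref{eq:ch9} is exactly membership in the domain $\mD(\delta^\alpha,\mJ_r)$ of the commuting derivations $\delta_j=[X_j,\cdot]$, and then quoting Proposition~\ref{prop_comm-deriv} for the inverse-closedness of that domain in $\mJ_r$ and Jaffard's theorem for the inverse-closedness of $\mJ_r$ in $\bopzd$. Your dictionary between \eqref{eq:ch9} and ``$\delta^\beta(A)\in\mJ_r$ for all $\beta\le\alpha$'' is correct, and in fact slightly more explicit than the paper's one-line version.

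The gap is in your core step. You establish $\delta^\beta(A\inv)\in\mJ_r$ by ``applying $\delta_j$ to $AA\inv=\one$'' and invoking the quotient rule. But the Leibniz rule is only available for elements of $\mD(\delta_j)$, and whether $A\inv$ lies in $\mD(\delta_j,\mJ_r)$ is precisely what you are trying to prove; as an abstract Banach-algebra argument this is circular. Read entrywise, the identity $\delta_j(A\inv)=-A\inv\,\delta_j(A)\,A\inv$ amounts to rearranging the double sum $\sum_{m,n}A\inv(k,m)\,(m_j-n_j)\,A(m,n)\,A\inv(n,l)$ into pieces containing factors such as $(m_j-k_j)A\inv(k,m)$, whose absolute convergence is not automatic: at this stage one only knows $\abs{m_j-k_j}\,\abs{A\inv(k,m)}\leq C(1+\abs{k-m})^{-(r-1)}$, and $r-1$ may well be $\leq d$. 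The rearrangement can be salvaged by summing in a suitable order, but you neither notice nor address this; indeed you identify the submultiplicativity of $\mJ_r$ as the main obstacle, which is the routine part. The paper's Theorem~\ref{thm:derivationsIC} resolves exactly this point in the opposite order: it first proves that $\mD(\delta)$ is \IC\ by showing $\rho_{\mD(\delta)}(a)=\rho_{\mA}(a)$ via the estimate $\norm{\delta(a^n)}_\mA\leq n\norm{a}_\mA^{n-1}\norm{\delta(a)}_\mA$ together with Hulanicki's Lemma (using that $\mJ_r$ is symmetric), and only then deduces the quotient rule as a corollary, once $\delta(a\inv)$ is known to exist in the algebra. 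To repair your proof, either import that spectral-radius argument or supply, at each stage of your induction, the absolute-convergence estimates that legitimize the entrywise Leibniz computation.
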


 To fill in the gap between integer rates of decay
(corresponding to $C^k$-functions), we turn to  \AT , which
suggests the concept of 
fractional smoothness and offers the  H\"older-Zygmund spaces. To
treat these constructions in the general context of \BA s, we need
more structure and consider \BA s with a  commutative
automorphism group and the associated generators.

The second idea for the construction of \IC\ matrix algebras is  based
on the intuition that a 
matrix with fast \odd\ can be approximated well by banded
matrices. Approximation theory  offers the concept of approximation spaces in order to
quantify the rate of approximation. It is therefore natural to study
the approximation properties  of matrices by banded matrices. As a
sample result we quote the following statement (cf. Corollary~\ref{cor:apprmas}).
\begin{thm} \label{thm-intro-matic}
  For a matrix $A= (A(k,l))_{k,l\in \bbZ }$   let $A_N$ be the banded
approximation of $A$ of width $N$ defined by the entries $A_N(k,l) =
A(k,l)$ for $|k-l|\leq N$ and $A_N(k,l) = 0$ otherwise. If $A$ is
invertible on $\ell ^2(\bbZ )$ and for some constants $r,C>0$
$$ 
\|A - A_N \|_{\ell ^2 \to \ell ^2} \leq C N^{-r}  \qquad \text{ for
  all } N > 0, 
$$ 
then there exists a sequence $B_N$ of banded matrices of width $N$,
such that 
$$
 \|A^{-1} - B_N \|_{\ell ^2 \to \ell ^2} \leq C' N^{-r}  \qquad \text{ for
  all } N > 0 \, .
$$
\end{thm}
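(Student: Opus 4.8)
The plan is to recognize this statement as the inverse-closedness of an approximation space and to reduce it to the two constructions described in the introduction. Write $\mathcal{T}_N \subseteq \mathcal{B}\bigl(\ell^2(\bbZ)\bigr)$ for the closed subspace of band matrices of width $N$, and set $E_N(A) = \inf_{S \in \mathcal{T}_N}\|A - S\|_{\ell^2 \to \ell^2}$. The family $(\mathcal{T}_N)_{N \ge 0}$ is increasing, contains the identity (in $\mathcal{T}_0$), and is \emph{submultiplicative}, $\mathcal{T}_M \mathcal{T}_N \subseteq \mathcal{T}_{M+N}$, since the product of a band matrix of width $M$ with one of width $N$ is banded of width $M+N$. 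The hypothesis $\|A - A_N\| \le C N^{-r}$ says that $E_N(A) \le C N^{-r}$, i.e.\ $A$ lies in the approximation space $\mathcal{E}^r = \{B : \sup_{N}(1+N)^r E_N(B) < \infty\}$; the desired conclusion is exactly $A\inv \in \mathcal{E}^r$, from which the matrices $B_N$ are obtained as near-best band approximants of $A\inv$. Thus the theorem amounts to the assertion that $\mathcal{E}^r$ is \IC\ in $\mathcal{B}\bigl(\ell^2(\bbZ)\bigr)$.

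First I would record the (easy) algebra property, which is where submultiplicativity enters. Given $A, B \in \mathcal{E}^r$ and $N$, I choose near-best approximants $S \in \mathcal{T}_{\lfloor N/2\rfloor}$ of $A$ and $T \in \mathcal{T}_{\lceil N/2\rceil}$ of $B$; then $ST \in \mathcal{T}_N$ and
\[
\|AB - ST\| \le \|A - S\|\,\|B\| + \|S\|\,\|B - T\|,
\]
with $\|S\| \le \|A\| + E_{\lfloor N/2\rfloor}(A)$ bounded uniformly in $N$. Hence $E_N(AB) = O(N^{-r})$, and $\mathcal{E}^r$ is a \BA\ under a suitable norm.

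The main step, and the main obstacle, is inverse-closedness. Naive truncation is hopeless: the inverse of a band matrix is generically a full matrix, so neither truncating $A\inv$ nor inverting the truncations $A_N$ produces band approximants. The device that breaks the impasse is the derivation $\delta(A) = [X,A]$ from the introduction, with $X(k,l) = 2\pi i k\,\delta_{k,l}$, which generates the commutative automorphism group $\alpha_t(A) = U_t A U_t^*$, where $U_t = \diag\bigl(\cexp[tk]\bigr)$. I would connect the band-approximation scheme to $\delta$ by a Jackson--Bernstein argument. The \emph{Bernstein inequality} $\|\delta(S)\| \le 2\pi N \|S\|$ for $S \in \mathcal{T}_N$ follows from the vector-valued Bernstein inequality applied to the $\mathcal{B}(\ell^2)$-valued trigonometric polynomial $t \mapsto \alpha_t(S)$ of degree $N$, whose sup-norm equals $\|S\|$ because $\alpha_t$ acts by unitary conjugation. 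The \emph{Jackson inequality} comes from averaging the group against a Fej\'er- or de~la~Vall\'ee-Poussin-type kernel, $A \mapsto \int \alpha_t(A)\,K_N(t)\,dt$, which yields a band matrix of width $O(N)$ approximating $A$ at the rate dictated by its $\delta$-smoothness. Together these identify $\mathcal{E}^r$ with the \HZ -type smoothness space $\Lambda^r$ determined by $\delta$.

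Finally, inverse-closedness of $\Lambda^r$ is precisely the first construction of the introduction: $\Lambda^r$ is one of the smooth \IC\ subalgebras built from the derivation $\delta$ (equivalently from the group $\alpha_t$). Concretely, the identity $\delta(A\inv) = -A\inv\,\delta(A)\,A\inv$ together with the iterated Leibniz rule controls all the fractional $\delta$-derivatives of $A\inv$ in terms of those of $A$ and of $\|A\inv\|$, so $A\inv \in \Lambda^r = \mathcal{E}^r$. This gives $E_N(A\inv) \le C' N^{-r}$, and any near-best band approximant $B_N \in \mathcal{T}_N$ of $A\inv$ satisfies the claimed estimate. I expect the technical heart to be the Jackson--Bernstein equivalence $\mathcal{E}^r = \Lambda^r$ --- verifying both inequalities for this particular scheme --- rather than the inverse-closedness of $\Lambda^r$, which is the abstract quotient-rule principle already isolated in the introduction.
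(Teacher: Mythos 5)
Your reduction of the statement to the inverse-closedness of the approximation space $\mE^\infty_r$ for the band-matrix scheme is exactly right, and your route to that inverse-closedness is viable, but it is genuinely different from --- and much heavier than --- the paper's. The paper deduces the theorem from Corollary~\ref{cor:apprmas}, whose proof (Proposition~\ref{appspIC}) uses nothing beyond the inequality you yourself establish in your ``easy'' algebra step: from $E_{2n}(ab)\leq C\bigl(\norm{a}_{\mA}E_n(b)+\norm{b}_{\mA}E_n(a)\bigr)$ one gets $\norm{ab}_{\mE^\infty_r}\leq C\bigl(\norm{a}_{\mE^\infty_r}\norm{b}_{\mA}+\norm{b}_{\mE^\infty_r}\norm{a}_{\mA}\bigr)$, and applying this with $a=b=c^n$ and taking $n$-th roots (Brandenburg's trick) yields $\rho_{\mE^\infty_r}(c)=\rho_{\bop}(c)$; since $\bop$ is a $C^*$-algebra, hence symmetric, Hulanicki's Lemma then gives inverse-closedness immediately. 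So the step you call ``the main obstacle'' is already contained in the estimate you dismissed as routine, and no derivation, automorphism group, or Jackson--Bernstein theorem is needed. Your alternative --- identifying $\mE^\infty_r$ with $\Lambda_r(\bop)$ via Jackson and Bernstein inequalities for the group $\chi_t(A)=M_tAM_{-t}$ and then invoking inverse-closedness of the H\"older--Zygmund algebra --- is precisely the content of Sections~3 and~5 of the paper (Theorems~\ref{thm_HZ_is_ICBA} and~\ref{prop:jacksonbernstein}), and it buys a structural dividend (approximability $=$ smoothness) that the direct argument does not; but be aware that for fractional $r$ the inverse-closedness of $\Lambda_r$ is \emph{not} obtained from the quotient rule for the derivation alone, as you suggest: one needs the difference-operator analogues $\Delta_t(a\inv)=-a\inv\,\Delta_t(a)\,\psi_t(a\inv)$ and its second-order version, since the derivation by itself cannot see non-integer smoothness.
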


For general \BA s one needs a substitute for the banded matrices and
postulates the existence of a graded sequence of subspaces compatible
with the algebraic structure. Then  one may define
approximation spaces and show that they form \IC\ subalgebras of the
original algebra. This line of research has been started in~\cite{Almira01,Almira06}.
  As an application
to operator algebras we construct a  new class of  \IC\ subalgebras
in ultra hyperfinite (UHF) algebras.

A further inspiration taken from  \AT\ is the equivalence of smoothness and
approximability. This principle is omnipresent in \AT\ and we
will add another facet to it. We will prove a 
Jackson-Bernstein theorem that is valid for a general \BA\ with a
commutative automorphism group. We will  show that the construction of
H\"older-Zygmund spaces via derivations and the approximation
properties based on ``bandlimited'' elements are equivalent and lead
to the same algebras. In other words, in a  \BA\  with enough structure
the two construction principles (smooth subalgebras and approximation
spaces) coincide. For matrix algebras we may rephrase the
fundamental paradigm of \AT\ by saying that  the approximability of
matrices by banded matrices is equivalent to off-diagonal decay. Thus the \odd\ of a
matrix describes some form of smoothness.

The abstract \BA\ methods not only yield  an
elegant explanation of some known results, but, more importantly,  we also
obtain new forms of \odd . 
Who would have guessed the following \odd\ condition? We certainly did
not, but derived it from the general theory. 

\begin{thm} \label{thm:baskalgapp}
Assume that $A= (A(k,l))_{k,l \in \bbZ } $  satisfies the following
\odd\ for some $r>0$:
  \begin{equation}
    \sup _{k\in \bbZ } |A(k,k)| <\infty, \quad
    2^{k r}\sum_{2^{k}\leq \abs{l}< 2^{k+1} }  \sup _{m\in
      \bbZ} |A(m,m-l)|  \leq C \quad \text{for all } k\geq 0. 
  \end{equation}
If $A$ is invertible on $\ell ^2(\bbZ )$, then the inverse matrix
satisfies the same form of \odd . 
\end{thm}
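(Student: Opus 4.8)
The plan is to recognize the stated condition as membership in an approximation space (equivalently, a \HZ\ space) of the \emph{Baskakov algebra} of convolution-dominated matrices, and then to run the inverse-closedness machinery of the preceding sections. Write $d_l := \sup_{m\in\bbZ}|A(m,m-l)|$, so that the hypothesis reads $d_0<\infty$ together with $2^{k r}\sum_{2^k\le|l|<2^{k+1}} d_l\le C$ for all $k\ge 0$. Summing the dyadic estimates over $k\ge 0$ and using $r>0$ gives $\sum_{l\in\bbZ} d_l<\infty$; hence $A$ lies in the \BA\ $\mathcal C$ of matrices with finite norm $\|A\|_{\mathcal C}=\sum_{l\in\bbZ} d_l$. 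A short convolution estimate for the diagonals of a product, $d_l^{AB}\le\sum_{l'} d_{l'}^A d_{l-l'}^B$, shows that $\|\cdot\|_{\mathcal C}$ is \smult, so $\mathcal C$ is a unital \BA, and it is classical (Baskakov, \cite{Bas90}) that $\mathcal C$ is \IC\ in $\mathcal B(\ell^2(\bbZ))$.

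Next I would equip $\mathcal C$ with the graded family of band matrices $\mathcal C_N=\{B\in\mathcal C: B(k,l)=0 \text{ for }|k-l|>N\}$. Because a product of band matrices of widths $N_1,N_2$ is banded of width $N_1+N_2$, one has $\mathcal C_{N_1}\mathcal C_{N_2}\subseteq\mathcal C_{N_1+N_2}$, which is exactly the compatibility between algebra and filtration required by the abstract approximation-space construction. Since the $\mathcal C$-norm splits as the sum $\sum_l d_l$ over diagonals, the band matrix of width $N$ nearest to $A$ is its truncation to $|k-l|\le N$, with error $E_N(A)=\sum_{|l|>N} d_l$. A routine \LP\ summation (splitting $\{|l|>N\}$ into dyadic blocks, and conversely estimating each block by a tail) shows that the dyadic condition in the statement is equivalent to $E_N(A)=\mathcal{O}(N^{-r})$, i.e.\ to $A$ belonging to the approximation space $\mathcal E^r(\mathcal C)$. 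Equivalently, the diagonal decomposition is the spectral decomposition for the modulation group $\beta_t(A)(k,l)=e^{2\pi i t(k-l)}A(k,l)$ of isometric automorphisms of $\mathcal C$, and the dyadic condition is its \LP\ (\HZ) description; the Jackson-Bernstein theorem of the previous sections identifies the two.

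Finally I would invoke the main abstract result that $\mathcal E^r(\mathcal C)$ is \IC\ in $\mathcal C$. Inverse-closedness is transitive, and $\mathcal C$ is \IC\ in $\mathcal B(\ell^2(\bbZ))$, so $\mathcal E^r(\mathcal C)$ is \IC\ in $\mathcal B(\ell^2(\bbZ))$. As $A$ is assumed invertible on $\ell^2(\bbZ)$ and lies in $\mathcal E^r(\mathcal C)$, we conclude $A^{-1}\in\mathcal E^r(\mathcal C)$, which by the equivalence above is precisely the claimed \odd\ for $A^{-1}$. The genuine analytic content is imported rather than reproved: the deep input is the inverse-closedness of the Baskakov algebra in $\mathcal B(\ell^2)$, while the structural input is the abstract inverse-closedness of approximation spaces. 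The step most likely to require care is the verification that $(\mathcal C,\{\mathcal C_N\})$ meets the hypotheses (in particular the Bernstein-type estimate $\|\delta B\|_{\mathcal C}\le 2\pi N\|B\|_{\mathcal C}$ for $B\in\mathcal C_N$, where $\delta$ is the generator of $\beta$) of the abstract theorem, together with the clean identification $E_N(A)=\sum_{|l|>N} d_l$ and its equivalence with the dyadic blocks.
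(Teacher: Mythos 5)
Your argument is correct, and the steps you leave as \emph{routine} do go through: solidity of the $\mC_0$-norm gives the exact formula $E_N(A)=\sum_{|l|\ge N}\sup_m|A(m,m-l)|$ for the error of band truncation, the dyadic resummation converting this into the stated condition is elementary, and Proposition~\ref{appspIC} applies because $\mC_0$ is symmetric (Proposition~\ref{fundamental}). You do, however, take a genuinely different route from the paper. The paper identifies condition \eqref{eq:44} with the \HZ\ space $\Lambda_r(\mC_0)$ of the modulation automorphism group $\chi_t$, via the \LP\ decomposition (Proposition~\ref{prop:littl-paley-decomp-3} specialized to solid matrix algebras), and then invokes Theorem~\ref{thm_HZ_is_ICBA}; the equality with the approximation space $\mE^\infty_r(\mC_0)$ is recorded only as a consequence of the Jackson--Bernstein theorem. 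You instead identify the condition directly with $\mE^\infty_r(\mC_0)$ for the band-matrix filtration and invoke the abstract inverse-closedness of approximation spaces (Proposition~\ref{appspIC}, i.e.\ Brandenburg's trick plus Hulanicki's lemma). Your route is more elementary: it bypasses the automorphism group, the spectral characterization of bandlimited elements, and the \LP\ machinery entirely, needing only the approximation-scheme axioms \eqref{eq:2}--\eqref{eq:26} and the explicit best approximation by truncation. What the paper's route buys is the interpretation of the decay condition as H\"older--Zygmund smoothness and the bridge between the two constructions; what yours buys is brevity and fewer hypotheses --- in particular, the Bernstein-type estimate you flag as the step most likely to require care is not actually needed in your argument, since it enters only if one wants the identification with $\Lambda_r(\mC_0)$ rather than the inverse-closedness itself.
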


In our presentation we will argue on three levels. The first level is classical approximation theory, which will
serve us as a motivation. We then switch to the level of abstract
\BA s and define the concepts and tools required for the investigation of
non-commutative approximation theory in \BA s. Finally we 
return to the level of matrix algebras and express the abstract results as
statements about the \odd\ of matrices. 
Our main interest lies in the algebra properties and the invertibility in such spaces. These aspects are rarely addressed in approximation theory.

An operator algebraist will probably not find a new result in
Section~\ref{sec:smooth}, and an approximation theorist will be quite familiar
with the machinery of approximation spaces. However, an operator
algebraist will learn that methods from approximation theory yield new
constructions for smooth subalgebras (and thus something new might be
gained for non-commutative geometry). An  approximation theorist 
might find  a new playground ahead, namely the approximation theory in
operator algebras. 

\emph{Outlook.} Once the connection between inverse-closedness and
approximation theory is understood, one may exploit the entire arsenal
of approximation theory to obtain new constructions of smooth
subalgebras that are inverse-closed in the original algebra. In
particular, for matrices one may  define off-diagonal decay conditions
that amount to Besov smoothness or to
quasi-analyticity~\cite{Klo09}. Such refinements will be the subject
of forthcoming  work. \\

The paper is organized as follows: In Section~2 we collect the
resources from the theory of Banach algebras. In Section~3 we study
concepts of smoothness in general \BA s.  These are based
on the existence of suitable unbounded derivations or on the presence
of $d$-parameter automorphism groups. In Section~4 we pursue the idea
of approximation spaces attached to a \BA\ and study the quantitative
approximation of matrices by banded matrices. Finally, in  Section~5
we prove a theorem of Jackson-Bernstein type and obtain a completely
new set of \odd\ conditions.

\textbf{Acknowledgement:} We would like to thank Palle Jorgensen who
suggested a possible connection between Jaffard's result and the
theory of derivations to one of us  already many years ago. 

A preliminary version of our results was already announced in
``Oberwolfach Reports'', vol. 4(3) (2007), pp.\ 2079. 

\section{Resources}
\label{sec:ressources}
\subsection{Notation}
\label{sec:notions}
For $x$ in \bbRd\ let $\abs{x}$ denote be the $1$-norm of $x$, $\abs{x}_2$ the $2$-norm, and $\abs x_\infty$ the
sup-norm.  The vectors $e_k$, $1 \leq k \leq d$, denote the standard
basis of \bbRd.  A multi-index $\alpha= (\alpha _1, \dots , \alpha _d)
\in \mathbb{N}_0^d$ is a  
$d$-tuple of nonnegative integers. We set
$x^\alpha=x_1^{\alpha_1}\cdots x_d^{\alpha_d}$,  and  $ D^\alpha
f(x)=\frac {\partial^{\alpha_1}}{\partial x_1^{\alpha_1}} \cdots \frac
{\partial^{\alpha_d}}{\partial   x_d^{\alpha_d}} f (x) $ is the
classical partial  derivative.  The degree  of $x^\alpha$ is
$\abs{\alpha} = \sum _{j=1}^d \alpha _j$, and $\beta \leq \alpha $
means that $\beta _j \leq \alpha _j$ for $j=1, \dots ,d$. 

Let $S(\bbRd)$ denote the Schwartz space of rapidly decreasing functions on \bbRd. The Fourier transform of $f
\in S(\bbRd)$ is $\mF f (\omega) = \int_{\bbRd} f(x) e^{-2 \pi i \omega \cdot x} \; dx$. This definition is
extended by duality to $S'(\bbRd)$, the space of tempered
distributions. 

A submultiplicative weight on \bbRd\ (or on $\bbZd$) is a positive function $v:\bbRd \to \bbR$ such that $v(0)=1$
and 
$v(x+y) \leq v(x)v(y)$ for $x,y \in \bbRd$.  The standard submultiplicative weights are the polynomial weights
$v_m(x) = (1+\abs x)^m$ for $m\geq 0$.

 The notation $ f \asymp g$
 means there are constants $C_1, C_2 > 0$ such that $C_1 f \leq g \leq C_2 f$.
Here $f$ and $g$ are two positive functions depending on other
parameters. 
Banach spaces with equivalent
norms are considered as equal. 

The operator norm of a bounded linear mapping $A: X \to Y$ between Banach spaces is denoted by $\norm{A}_{X \to Y}$.
\subsection{Concepts from the Theory of Banach Algebras}
\label{sec:concepts-from-theory}
Besides standard notions from \BA\ theory we will use some less known
concepts. 

  All \BA s will be \emph{unital}. To verify that a \BS\ \mA\ with norm $\norm{\phantom{i}}_\mA$ is a \BA\ we will often prove the weaker property $\norm{ab}_\mA \leq C \norm{a}_\mA \norm{b}_\mA$ for some constant $C$.
The norm $\norm{a}'_\mA=\sup_{\norm{b}_\mA =1}\norm{ab}_\mA$ is then an equivalent norm on \mA\ and satisfies $\norm{ab}'_\mA \leq \norm{a}'_\mA \norm{b}'_\mA $.

\begin{defn}[Inverse-closedness]
  Let $\mA \subseteq \mB$ be  a nested pair of \BA s with a common identity. Then \mA\ is called
  \emph{inverse-closed} in \mB, if
  \begin{equation} \label{eq_3} a \in \mA \text{ and } a^{-1} \in \mB \quad \text{implies}\quad a^{-1} \in \mA.
  \end{equation}
\end{defn}
Inverse-closedness is equivalent to \emph {spectral invariance}. This
means  that  the spectrum 
$ \sigma_\mA(a) =\set{\lambda \in \bbC \colon a-\lambda \text{ not invertible in } \mA} $ of an element  $a \in
\mA$  does not depend on the algebra and so 
\begin{equation*}
  \sigma_\mA(a)= \sigma_\mB (a), \quad \quad \text{ for all } \, a \in \mA.
\end{equation*}
If \mA\ is \IC\ in \mB\, and \mB\ is \IC\ in \mC, then \mA\ is \IC\ in \mC.

\subsubsection*{The Lemma of Hulanicki}
\label{sec:lemma-hulanicki}
The verification of inverse-closedness is often nontrivial. Under
additional conditions this verification is 
sometimes possible by using an argument of Hulanicki~\cite{Hul72, FGL06}.

 Recall that a Banach $*$-algebra is \emph{symmetric},  if the spectrum of positive elements is
non-negative, $\sigma _{\mA } (a^*a)\subseteq [0,\infty)$ for all $a \in\mA$.
Denote the spectral radius of $a \in \mA$ as $\rho_\mA(a)=\sup \set{\abs \lambda : \lambda \in \sigma_\mA(a)}$ .
\begin{prop}[Hulanicki's Lemma] \label{prop:lemma-hulanicki} Let \mB\ be a symmetric \BA, $\mA
  \subseteq \mB$ a $*$-subalgebra with common involution and common  unit element. \tfae
  \begin{enumerate}
  \item \mA\ is \IC\ in \mB .
  \item $\rho_\mA(a)= \rho_\mB(a) \text{ for all } a=a^*$ in \mA.
  \end{enumerate}
\end{prop}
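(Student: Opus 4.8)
The plan is to prove the two implications separately, with (2)$\Rightarrow$(1) carrying all the substance. The implication (1)$\Rightarrow$(2) is immediate: inverse-closedness is equivalent to spectral invariance, so $\sigma_\mA(a)=\sigma_\mB(a)$ for every $a\in\mA$, and taking the supremum of $\abs\lambda$ over these (equal) sets gives $\rho_\mA(a)=\rho_\mB(a)$, in particular for every self-adjoint $a$.

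For (2)$\Rightarrow$(1) I would first reduce to self-adjoint --- indeed positive --- elements. Given $a\in\mA$ invertible in $\mB$, set $b=a^*a\in\mA$. Then $b=b^*$, and $b$ is invertible in $\mB$ as a product of invertibles; since $\mB$ is symmetric, $\sigma_\mB(b)\subseteq[0,\infty)$, and invertibility forces $0\notin\sigma_\mB(b)$, so $\sigma_\mB(b)$ is a compact subset of some interval $[c,d]$ with $0<c\le d=\rho_\mB(b)$. If I can show that $b$ is invertible in $\mA$, then $a^{-1}=(a^*a)^{-1}a^*$ lies in $\mA$ --- the right-hand side is a genuine inverse because $a$ is already invertible in $\mB$ --- and inverse-closedness follows.

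The heart of the argument is to establish invertibility of $b$ in $\mA$ using only the hypothesis on spectral radii of \emph{elements of $\mA$}. The obstacle is that the natural candidate $b^{-1}$ is a priori known to lie only in $\mB$, so condition (2) cannot be applied to it directly. I would circumvent this by an affine rescaling: choose the real parameter $t=2/(c+d)$ and set $u=e-tb\in\mA$, where $e$ denotes the common identity. Then $u=u^*$ and $\sigma_\mB(u)=1-t\,\sigma_\mB(b)\subseteq[-\theta,\theta]$ with $\theta=(d-c)/(d+c)<1$, so that $\rho_\mB(u)<1$. Now condition (2) applies to the self-adjoint element $u\in\mA$ and yields $\rho_\mA(u)=\rho_\mB(u)<1$.

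Finally, since $\rho_\mA(u)<1$, the spectral radius formula $\rho_\mA(u)=\lim_n\norm{u^n}_\mA^{1/n}$ gives $\norm{u^n}_\mA\le C\,\gamma^n$ for some $\gamma<1$, so the Neumann series $\sum_{n\ge0}u^n$ converges in $\mA$ and inverts $e-u=tb$. Hence $b^{-1}\in\mA$, which completes the reduction and thus the proof. I expect the only delicate point to be the verification that the rescaling places $\rho_\mB(u)$ strictly below $1$ while keeping $u$ self-adjoint and inside $\mA$; everything else is a routine application of the Neumann series together with the symmetry of $\mB$.
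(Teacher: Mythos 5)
Your proof is correct. There is, however, nothing in the paper to compare it against: the authors state Hulanicki's Lemma without proof, citing Hulanicki and Fendler--Gr\"ochenig--Leinert, so this proposition is imported from the literature rather than proved. What you have written is exactly the classical argument: the easy direction via spectral invariance; then, for (2)$\Rightarrow$(1), the reduction to the positive element $b=a^*a$, the use of symmetry of $\mB$ to place $\sigma_\mB(b)$ in an interval $[c,d]$ with $c>0$, the affine rescaling $u=e-tb$ with $t=2/(c+d)$ so that $u=u^*\in\mA$ and $\rho_\mB(u)\le (d-c)/(c+d)<1$, the transfer $\rho_\mA(u)=\rho_\mB(u)<1$ by hypothesis, the Neumann series for $(e-u)^{-1}$ in $\mA$, and finally $a^{-1}=(a^*a)^{-1}a^*\in\mA$. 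All of these steps are sound. The one point you should make explicit is that the Neumann-series step and Gelfand's formula $\rho_\mA(u)=\lim_n\norm{u^n}_\mA^{1/n}$ require $\mA$ to be complete in its own (submultiplicative) norm; the proposition as stated says only ``$*$-subalgebra,'' but completeness of $\mA$ is implicit in the paper's usage and is part of the standard hypotheses of Hulanicki's lemma, so this is a matter of stating an assumption rather than a gap in the argument.
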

In particular, if \mA\ is a closed $*$-subalgebra of \mB , then \mA\ is \IC\ in \mB.

\subsubsection*{Brandenburg's trick \cite{Brandenburg75}}
\label{sec:method-brandenburg}
This method is sometimes  used to prove the equality of spectral radii. Let $\mA\subseteq\mB$ be two \BA s with the
same identity. Assume that the norms satisfy
\begin{equation}
  \label{eq:25}
  \norm{ab}_{\mA} \leq C
  (\norm{a}_{\mA}\norm{b}_{\mB}+\norm{b}_{\mA}\norm{a}_{\mB}) \qquad  \text{ for all } a,b \in \mA.
\end{equation}
Applying (\ref{eq:25}) with $a=b=c^n$ yields
\begin{equation*}
  \norm{c^{2n}}_\mA \leq 2C \norm{c^n}_\mA \norm{c^n}_\mB.
\end{equation*}
Taking $n$-th roots and the limit $n \to \infty$ gives $ \rho_\mA(c) \leq \rho_\mB(c)$. Since the reverse
inequality is always true for $\mA \subseteq \mB$, we obtain the equality of spectral radii. By
Proposition~\ref{prop:lemma-hulanicki} $\mA $ is inverse-closed in $\mB $.

\subsection{Examples of Smoothness and Matrix Algebras}
\label{sec:standard-examples}
We will use two classes of examples. The smoothness spaces $C^k$ and
$\Lambda_r$ on \bbRd\ serve to motivate some 
abstract concepts, and the  matrix algebras serve as the fundamental
Banach algebras to which we will apply the general theory.

\subsubsection*{Smoothness Spaces}
These are the spaces $C^k(\bbRd)$ with norms
\begin{equation}
  \norm{f}_{C^k}=\sum_{\abs \alpha \le k}\norm{D^\alpha f}_\infty.
\end{equation}
Using the translation operator $T_t$,  $T_tf(x)=f(x-t), t,x\in \bbRd$, the H\"older-Zygmund spaces $\Lambda_r(\bbRd)$
are defined with the help of the seminorms
\begin{equation}
  \label{lip*}
  \abs{f}_{\Lambda_r}= \sup_{\abs t \neq 0 } \abs{t}^{-r}{\norm{T_tf - 2 f +  T_{-t}f}_\infty},\quad 0 < r \leq 1  .
\end{equation}
For $r=k+\eta$, $k \in \bbN_0$, $0< \eta \le1$ the norm
\begin{equation}
  \label{eq:holderzygmund}
  \norm{f}_{\Lambda_r} = \norm{f}_{C^k}+ \max \set{\abs{D^\alpha f}_{\Lambda_\eta} \colon \abs \alpha =k}
\end{equation}
defines the H\"older-Zygmund space $\Lambda_r (\bbRd) $ of order $r$.
\subsubsection*{Matrix Algebras}
\label{sec:matalg}
One of the main insights of this paper is the striking similarity between trigonometric approximation and
approximation of matrices by banded matrices. To describe the most common forms of off-diagonal decay, let us fix some
notation. An infinite matrix $A$ over \bbZd\ is a function $A:\bbZd \times \bbZd \to \bbC$.  The $m$-th side
diagonal of $A$ is the matrix $\hat A (m)$ given by
\begin{equation}
  \label{eq:8}
  \hat A(m)(k,l)=\begin{cases}
    A(k,l),& \quad k-l=m,\\
    0,     & \quad \text{otherwise}.
  \end{cases}
\end{equation}
With this notation a matrix $A$ is \emph{banded} with bandwidth $N$, if
\begin{equation}
  \label{eq:bandmatrix}
  A=\sum_{\abs m \leq N} \hat A (m).
\end{equation}
Let us define the most common examples of matrix algebras over \bbZd.
\\
The \emph{Jaffard algebra} $\mJ_r$, $ r>d$, is defined by the norm
\begin{equation}
  \label{eq:jaffnorm}
  \norm{A}_{\mJ_r}=\sup_{k,l \in \bbZd}\abs{A(k,l)}v_r(k-l), \quad r>d.
\end{equation}
Explicitly, $A \in \mJ_r \Leftrightarrow |A(k,l)| \leq C (1+|k-l|)^{-r} $, so the norm of $\mJ_r$ describes
polynomial decay off the diagonal.  Writing the norm in terms of the
side-diagonals, we will often use that
\begin{equation}
  \label{eq:ch2}
  \|A\|_{\mJ_r} = \sup _{k\in \bbZ ^d} \|\hat{A}(k)\|_{\ell^2 \to \ell^2}  (1+\abs{k})^{r} \, .  
\end{equation}
The \emph{algebra of convolution-dominated matrices} $\mC_r, r\geq0$,  (sometimes called the
Baskakov-Gohberg-Sj\"ostrand algebra) consists of all matrices $A$, such that the norm
\begin{equation}
  \label{eq:basknorm}    
  \norm{A}_{\mC_r}= \sum_{k \in \bbZd}\sup_{l \in \bbZd} \abs{A(l,l-k)} v_r(k) = \sum _{k\in
    \bbZd} \|\hat A (k) \|_{\ell^2 \to \ell^2} \, (1+|k|)^r 
\end{equation}
is finite. This is the weighted $\ell^1$-norm of the suprema of the side diagonals.
\\
The \emph{Schur algebra} $\mS_r$, $r\geq0$, is defined by the norm
\begin{equation}
  \label{eq:schurnorm}
  \norm{A}_{\mS_r} = \max \Big\{ \sup_{k\in \bbZd} \sum_{l\in \bbZd}  \abs{A(k,l)}v_r(k-l),
  \sup _{l\in \bbZd}\sum_{k\in \bbZd} \abs{A(k,l)}v_r(k-l)\Big\}.
\end{equation}

We note that the norms above depend only on the absolute values of the matrix entries. Precisely, we say that a
matrix norm on $\mA $ is \emph{solid}, if $B \in \mA$ and $\abs{A(k,l)} \leq \abs{B(k,l)}$ for all $k,l$ implies
$A \in \mA$ and $\norm{A}_{\mA}\leq\norm{B}_{\mA}$.  

The following result summarizes the main properties of the
matrix classes $\mC , \mJ, \mS $. See \cite{Bas90, GL04a,Jaffard90} for proofs.



\begin{prop} \label{fundamental} Let $\mA $ be one of the matrix
  classes   $\mJ _r$ for $r>d$, $\mS _r $ for
  $r>0$, and $\mC _r$ for $r\geq 0$. 

(i) Then   $\mA $ is  a solid Banach $*$-algebra  with respect to
matrix multiplication 
  and taking adjoints as the involution. 

(ii) Every  $\mA $  is continuously embedded into $\mB (\ell ^p(\bbZd))
$,$ 1\leq p \leq\infty$.

(iii)  Every  $\mA $ is inverse-closed in $\bopzd $. In particular, $\mA $ is symmetric. 
\end{prop}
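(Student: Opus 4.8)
The plan is to treat the three assertions in order of increasing depth: (i) is essentially bookkeeping, (ii) reduces every case to the action of a single side-diagonal, and (iii) is the only part carrying real analytic weight, which I would funnel through Hulanicki's Lemma (Proposition~\ref{prop:lemma-hulanicki}) and Brandenburg's trick \eqref{eq:25}.

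\emph{Part (i).} Solidity is immediate, since each of the norms \eqref{eq:jaffnorm}, \eqref{eq:basknorm}, \eqref{eq:schurnorm} is assembled only from the moduli $\abs{A(k,l)}$ through monotone (weighted $\ell^\infty$-, weighted $\ell^1$-, and row/column-sum) operations, and completeness of these weighted sequence spaces is standard. The involution $A^*(k,l)=\overline{A(l,k)}$ leaves every norm invariant because $v_r(-x)=v_r(x)$ and $\norm{\hat A(-m)^*}_{\ell^2\to\ell^2}=\norm{\hat A(-m)}_{\ell^2\to\ell^2}$ (in the Schur case the two terms of the maximum are merely swapped). The only substantial point is sub-multiplicativity, treated case by case: for $\mC_r$ one uses $\widehat{AB}(m)=\sum_k\hat A(k)\hat B(m-k)$ together with $v_r(m)\le v_r(k)v_r(m-k)$ to factor a weighted $\ell^1$-convolution, giving $\norm{AB}_{\mC_r}\le\norm{A}_{\mC_r}\norm{B}_{\mC_r}$; for $\mS_r$ one distributes $v_r(k-l)\le v_r(k-j)v_r(j-l)$ inside the row and column sums and factors the resulting double sum (valid for all $r\ge0$); for $\mJ_r$ one is reduced to the convolution inequality $\sum_j(1+\abs{k-j})^{-r}(1+\abs{j-l})^{-r}\le C(1+\abs{k-l})^{-r}$, proved by splitting the index range according to whether $\abs{k-j}$ or $\abs{j-l}$ exceeds $\abs{k-l}/2$. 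The summability of $(1+\abs m)^{-r}$ over $\bbZd$ that this argument needs is exactly the hypothesis $r>d$.

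\emph{Part (ii).} Every side-diagonal acts as a weighted shift, $(\hat A(m)x)(k)=A(k,k-m)x(k-m)$, so $\norm{\hat A(m)}_{\ell^p\to\ell^p}=\sup_k\abs{A(k,k-m)}$ for every $1\le p\le\infty$. Hence $\mC_r\hookrightarrow\bopzd$ on every $\ell^p$ with $\norm{A}_{\ell^p\to\ell^p}\le\sum_m\norm{\hat A(m)}_{\ell^p\to\ell^p}\le\norm{A}_{\mC_0}\le\norm{A}_{\mC_r}$. For $\mS_r$, the row-sum bound gives $\norm{A}_{\ell^\infty\to\ell^\infty}\le\norm{A}_{\mS_r}$ and the column-sum bound gives $\norm{A}_{\ell^1\to\ell^1}\le\norm{A}_{\mS_r}$, and Riesz--Thorin interpolation fills in $1\le p\le\infty$. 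Finally, when $r>d$ one has $\mJ_r\hookrightarrow\mC_0$ continuously (again by $\sum_m(1+\abs m)^{-r}<\infty$), so the Jaffard case follows from the convolution-dominated one.

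\emph{Part (iii).} By (i) and (ii) each $\mA$ is a $*$-subalgebra of the $C^*$-algebra $\bopzd$ with the same unit and involution, and a $C^*$-algebra is symmetric; so by Proposition~\ref{prop:lemma-hulanicki} inverse-closedness is equivalent to $\rho_\mA(a)=\rho_{\bopzd}(a)$ for self-adjoint $a$, and once this holds the symmetry of $\mA$ is inherited from $\bopzd$. To get equality of spectral radii I would use Brandenburg's trick \eqref{eq:25}. The structural input is that $v_r$ is sub-additive up to a constant, $v_r(x+y)\le 2^r(v_r(x)+v_r(y))$; substituting this for sub-multiplicativity in the estimates of (i) converts each product bound into a Leibniz-type inequality $\norm{AB}_\mA\le C(\norm{A}_\mA\norm{B}_{\mA_0}+\norm{A}_{\mA_0}\norm{B}_\mA)$ in which the weight survives on only one factor, with an unweighted companion algebra $\mA_0$. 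One can take $\mA_0=\mC_0$ for both $\mC_r$ and $\mJ_r$ (for $\mJ_r$ after estimating a full column sum $\sum_j\abs{B(j,l)}\le\norm{B}_{\mC_0}$). Brandenburg's trick then yields $\rho_\mA=\rho_{\mA_0}$, and by transitivity of inverse-closedness the problem collapses onto the single statement that the unweighted base algebra is inverse-closed in $\bopzd$.

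\textbf{The main obstacle} is precisely this base case: the Wiener-type lemma that the unweighted convolution-dominated algebra $\mC_0$ is inverse-closed in $\bopzd$ (\cite{Bas90,Sjo95}), and, for the Schur family in the full range $0<r\le d$ where the reduction to $\mC_0$ is unavailable, the direct spectral-radius estimate of Gr\"ochenig--Leinert~\cite{GL04a}. These are the deep, non-elementary inputs; they cannot be produced by the Brandenburg reduction alone, whereas everything surrounding them is reduction and routine estimation.
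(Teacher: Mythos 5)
Your proposal is correct, but it is worth noting that the paper does not prove Proposition~\ref{fundamental} at all: it defers entirely to the literature (``See \cite{Bas90, GL04a, Jaffard90} for proofs''). Your parts (i) and (ii) are complete and elementary --- the side-diagonal convolution $\widehat{AB}(m)=\sum_k \hat A(k)\hat B(m-k)$ for $\mC_r$, the weighted Schur test for $\mS_r$, the convolution inequality $\sum_j v_r(k-j)^{-1}v_r(j-l)^{-1}\le C v_r(k-l)^{-1}$ for $\mJ_r$ with $r>d$, and the observation $\norm{\hat A(m)}_{\ell^p\to\ell^p}=\sup_k\abs{A(k,k-m)}$ are exactly the standard computations, and the constant $C$ in submultiplicativity is harmless by the renorming convention stated in Section~\ref{sec:concepts-from-theory}. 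In part (iii) your use of the quasi-subadditivity $v_r(x+y)\le 2^r\bigl(v_r(x)+v_r(y)\bigr)$ to produce the Leibniz-type estimate \eqref{eq:25} with $\mB=\mC_0$ (resp.\ $\mS_0$), followed by Brandenburg's trick and Hulanicki's Lemma, is sound and is in the same spirit as the paper's own later reductions (Proposition~\ref{prop-solidmazd} and the remark following Corollary~\ref{jaffident}, which reduce Jaffard's theorem to a small range of exponents). You correctly isolate the two inputs that no such reduction can supply: the Wiener-type inverse-closedness of the unweighted algebra $\mC_0$ in $\bopzd$ \cite{Bas90,Sjo95}, and the Gr\"ochenig--Leinert argument \cite{GL04a} for $\mS_r$ in the range $0<r\le d$ where $\mS_r\not\subseteq\mC_0$; these are precisely the results the paper cites in lieu of a proof. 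The only trade-off worth flagging is that your route to the Jaffard case goes through Baskakov's theorem for $\mC_0$, whereas Jaffard's original proof (and \cite{GL04a}) treats $\mJ_r$ directly; both are legitimate, and yours has the merit of making the logical dependencies explicit.
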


  In the sequel we will construct algebras that are inverse-closed in one of the standard
algebras $\mC _r, \mJ _r, \mS _r$. These ``derived'' algebras will then be automatically inverse-closed in
$\bop$. In this sense Proposition~\ref{fundamental} is fundamental.

For a more general description of the \odd\ we use weight functions. Let $\mA $ be a matrix algebra and $v$ an
even weight function on $\bbZ ^d$. Then
\begin{equation} \label{eq:1a} \mA_v=\set{A \in \mX : \bigl(A(k,l)v(k-l)\bigr)_{k,l, \in \bbZd} \in \mA}.
\end{equation}
Writing $\tilde A (k,l)=A(k,l) v(k-l)$, the norm on $\mA _v$ is given by $\|A\|_{\mA _v} = \|\tilde{A} \|_{\mA
}$.

With this definition, the standard matrix algebras of Proposition~\ref{fundamental} are just weighted versions of
the basic types $\mC, \mJ$, and $\mS $. Specifically, using the polynomial weight $v_r(k) = (1+|k|)^r$, we have
\begin{equation}
  \label{eq:ch8}
  \mC _r =   (\mC _0)_{v_r}, \,\, \mS _r =   (\mS _0)_{v_r}, \,\, \text{
    and  } \,\, \mJ _{s +r} = (\mJ _s ) _{v_r} \, . 
\end{equation}

\begin{prop}
  \label{prop:solid-ma}
  If \mA\ is a solid \MA\ and $v$ is a submultiplicative weight, then $\mA_v$ is a solid \MA.
\end{prop}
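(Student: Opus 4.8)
The plan is to reduce every assertion about $\mA_v$ to the corresponding fact about $\mA$ through the \emph{weighting map} $A \mapsto \tilde A$, where $\tilde A(k,l) = A(k,l)\,v(k-l)$. By the definition of $\mA_v$ this map is a linear isometry of $\mA_v$ onto $\mA$, and since $v$ is positive it is a bijection with inverse $\tilde A(k,l) \mapsto \tilde A(k,l)/v(k-l)$. Consequently $\mA_v$ inherits completeness from $\mA$ and is a \BS. The unit survives because $v(0)=1$: the identity matrix $I$ satisfies $\tilde I = I \in \mA$, so $\mA_v$ is unital. Solidity is equally immediate — if $|A(k,l)| \le |B(k,l)|$ with $B \in \mA_v$, then multiplying by the positive factor $v(k-l)$ gives $|\tilde A(k,l)| \le |\tilde B(k,l)|$, and solidity of $\mA$ yields $\tilde A \in \mA$ with $\|\tilde A\|_\mA \le \|\tilde B\|_\mA$, i.e.\ $A \in \mA_v$ with $\|A\|_{\mA_v} \le \|B\|_{\mA_v}$. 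Applying solidity in both directions also records the fact $\|\,|C|\,\|_\mA = \|C\|_\mA$ for every $C \in \mA$, where $|C|$ denotes the matrix of entrywise absolute values; I will use this below.

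The substantive step is to verify that $\mA_v$ is closed under matrix multiplication with a submultiplicative norm. For $A,B \in \mA_v$ I would start from
\[
  \widetilde{AB}(k,l) = \Big( \sum_m A(k,m)\,B(m,l) \Big)\,v(k-l),
\]
and split the weight by submultiplicativity, $v(k-l) = v\bigl((k-m)+(m-l)\bigr) \le v(k-m)\,v(m-l)$. Combined with the triangle inequality this produces the entrywise domination
\[
  |\widetilde{AB}(k,l)| \le \sum_m |A(k,m)|\,v(k-m)\,|B(m,l)|\,v(m-l) = \bigl(|\tilde A|\,|\tilde B|\bigr)(k,l),
\]
where the right-hand side is the \emph{ordinary} matrix product of the absolute-value matrices $|\tilde A|$ and $|\tilde B|$. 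Now I would chain three facts about $\mA$: solidity turns the pointwise bound into $\|\widetilde{AB}\|_\mA \le \|\,|\tilde A|\,|\tilde B|\,\|_\mA$; the algebra property of $\mA$ gives $\|\,|\tilde A|\,|\tilde B|\,\|_\mA \le C\,\|\,|\tilde A|\,\|_\mA\,\|\,|\tilde B|\,\|_\mA$ for its multiplication constant $C$; and the identity $\|\,|\tilde A|\,\|_\mA = \|\tilde A\|_\mA$ removes the absolute values. The result is
\[
  \|AB\|_{\mA_v} = \|\widetilde{AB}\|_\mA \le C\,\|\tilde A\|_\mA\,\|\tilde B\|_\mA = C\,\|A\|_{\mA_v}\,\|B\|_{\mA_v},
\]
which is exactly the (weak form of the) algebra inequality for $\mA_v$.

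I do not expect a genuine obstacle here: the weighting map makes the problem almost mechanical, and the one idea carrying the proof is that the \smult\ property of $v$ transfers the single weight $v(k-l)$ attached to the product onto the two weights $v(k-m)$ and $v(m-l)$ attached to the factors, which is precisely what is needed to recognise $|\tilde A|\,|\tilde B|$ on the right. The only place demanding attention is the systematic appeal to solidity: it is needed both to convert the entrywise estimate into a norm estimate and to pass between $\|\,|\tilde A|\,\|_\mA$ and $\|\tilde A\|_\mA$, since the natural bound lands on the product $|\tilde A|\,|\tilde B|$ of absolute-value matrices rather than on $\tilde A\,\tilde B$ itself.
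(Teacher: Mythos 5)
Your proposal is correct and follows essentially the same route as the paper: both reduce everything to the weighting map $A\mapsto\tilde A$ and prove submultiplicativity by splitting $v(k-l)\le v(k-m)v(m-l)$ to dominate $|\widetilde{AB}|$ entrywise by $|\tilde A|\,|\tilde B|$, then invoking solidity and the algebra property of $\mA$. Your version is in fact slightly more careful than the paper's, which elides the distinction between $\|\,|\tilde A|\,|\tilde B|\,\|_\mA$ and $\|\tilde A\tilde B\|_\mA$ that you handle explicitly via the identity $\|\,|C|\,\|_\mA=\|C\|_\mA$.
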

\begin{proof}
  The only nontrivial part is to verify that the $\mA_v$-norm is submultiplicative. Let $A,B$ be in $\mA_v$. We
  write $\tilde A (k,l)=A(k,l) v(k-l)$ and $\abs A$ for the matrix with entries $\abs{A(k,l)}$, then
  \begin{equation*}
    \begin{split}
      \abs{\widetilde{AB} }(k,l) &= \Bigl \lvert {\sum_m A(k,m)B(m,l)v(k-l)} \Bigr \rvert \\
      &\leq \sum_m \abs{A(k,m)v(k-m)} \; \abs{B(m,l)v(m-l)}=(\abs{\tilde A }\abs{\tilde B })(k,l)\, .
    \end{split}
  \end{equation*}
  Consequently, $\|AB\|_{\mA _v} = \|\widetilde{AB}\|_{\mA } \leq \|\widetilde{A}\widetilde{B}\|_{\mA }\leq \|\widetilde{A}\|_{\mA } \|\widetilde{B}\|_{\mA
  } = \|A\|_{\mA _v} \|B\|_{\mA _v} $.
\end{proof}
We do not know if the proposition remains true for non solid \MA s.
\section{Smoothness in Banach Algebras}
\label{sec:smooth}
In classical analysis the smoothness is measured by derivatives and by higher order difference operators. In this
section we the identify corresponding structures for \BA s: these are unbounded derivations, groups of
automorphisms, and algebra-valued H\"older spaces. The standard literature (e.g. \cite{bratteli96, bratrob87}) is
formulated for $C^*$-algebras and densely defined derivations, whereas we work mostly with Banach $*$-algebras
and derivations without dense domain. We are therefore obliged to be especially careful before adopting a result
for our purpose and will provide streamlined proofs where necessary.

\subsection{Derivations}
\label{sec:derivations}
For real functions the smoothness is measured by derivatives. The corresponding concept for \BA s are unbounded
derivations.  A \emph{derivation} $\delta$ on a \BA\ \mA\ is a closed linear mapping $\delta \colon \mD \to \mA$,
where the \emph{domain} $\mD=\mD(\delta)=\mD(\delta,\mA)$ is a (not necessarily closed or dense) subspace of \mA,
and $\delta$ fulfills the Leibniz rule
\begin{equation}
  \label{eq:derivation}
  \delta(ab) =a \delta(b) + \delta(a) b \qquad \text{for all} \,\,
  a,b \in \domd. 
\end{equation}
If \mA\ possesses an involution, we assume that the derivation and the
domain are  symmetric, i.e., $\mD=\mD^*$ and
$\delta(a^*) =\delta(a)^*$ for all $a \in \mD$. The domain is normed with the graph norm
$\norm{a}_{\domd}=\norm{a}_\mA+\norm{\delta(a)}_\mA$.
Equation~(\ref{eq:derivation}) implies that $\domd$ is a (not necessarily unital) Banach algebra, and the
canonical mapping $\domd \to \mA$ is a \cont\ embedding.
\begin{ex} [Derivations on $L^\infty$]
  The classical derivative $\frac{d}{dx}: f\mapsto f'$ is a closed, symmetric derivation on the von Neumann-algebra
  $L^\infty(\bbR)$. The domain of $\frac{d}{dx}$ in $L^\infty(\bbR)$ consists of all Lipschitz functions with
  essentially bounded derivative.
  Clearly, $\mD(\delta,L^\infty)$ is not dense in $L^\infty$.
\end{ex}
\begin{ex}[Derivations on Matrix Algebras] \label{ex:derivations-MAs} Let \mA\ be a \MA\ over \bbZ.  Define the
  diagonal matrix $X$ by $X(k,k)= 2 \pi i k$. Then the formal commutator
$$
\delta_X(A)=[X,A]=XA-AX
$$
has the entries $ [X,A](k,l)=2 \pi i (k-l)A(k,l)$ for $ k,l \in \bbZ $, and $\delta _X$ defines a closed,
symmetric derivation on \mA.

This derivation is closely related to the weighted matrix algebra
$A_{v_1}$, at least for \emph{solid} \MA s. 
\begin{prop} \label{prop:solid_MA_domain} Let \mA\ be a solid \MA\ over \bbZ. Then $\mD(\delta,\mA)=\mA_{v_1}$,
  and the norms $\norm{\phantom{I}}_{\domd}$ and $ \norm{\phantom{I}}_{\mA_{v_1}}$ are
  equivalent.  
\end{prop}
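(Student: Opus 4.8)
The plan is to reduce the statement to a pointwise comparison of matrix entries and then to invoke solidity. Write $\tilde A(k,l) = A(k,l)v_1(k-l) = (1+|k-l|)\,A(k,l)$ for the weighted matrix, and recall from Example~\ref{ex:derivations-MAs} that $\delta_X(A)(k,l) = 2\pi i\,(k-l)\,A(k,l)$. The engine of the proof consists of the two pointwise bounds
\[
  |A(k,l)| \le |\tilde A(k,l)|, \qquad |\delta_X(A)(k,l)| = 2\pi|k-l|\,|A(k,l)| \le 2\pi\,|\tilde A(k,l)|,
\]
together with the exact pointwise identity
\[
  |\tilde A(k,l)| = |A(k,l)| + \tfrac{1}{2\pi}\,|\delta_X(A)(k,l)| .
\]
Everything else is bookkeeping with the solid norm.

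First I would prove $\mA_{v_1} \subseteq \domd$. If $A \in \mA_{v_1}$, that is $\tilde A \in \mA$, then the two pointwise bounds together with solidity give at once $A \in \mA$ and $\delta_X(A) \in \mA$, with $\norm{A}_{\mA} \le \norm{\tilde A}_{\mA}$ and $\norm{\delta_X(A)}_{\mA} \le 2\pi\norm{\tilde A}_{\mA}$. Adding these yields $A \in \domd$ and $\norm{A}_{\domd} \le (1+2\pi)\norm{A}_{\mA_{v_1}}$.

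For the converse $\domd \subseteq \mA_{v_1}$, suppose $A \in \mA$ and $\delta_X(A) \in \mA$. Since the norm is solid, the entrywise absolute-value matrices $|A|$ and $|\delta_X(A)|$ again belong to $\mA$ with unchanged norm, hence so does the nonnegative matrix $M = |A| + \tfrac{1}{2\pi}|\delta_X(A)|$, and $\norm{M}_{\mA} \le \norm{A}_{\mA} + \tfrac{1}{2\pi}\norm{\delta_X(A)}_{\mA} \le \norm{A}_{\domd}$. The identity above reads exactly $|\tilde A(k,l)| = M(k,l)$, so a final application of solidity gives $\tilde A \in \mA$, i.e.\ $A \in \mA_{v_1}$, with $\norm{A}_{\mA_{v_1}} = \norm{\tilde A}_{\mA} \le \norm{M}_{\mA} \le \norm{A}_{\domd}$. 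The two norm estimates establish the claimed equivalence.

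There is no deep obstacle here; the one point to watch, and the reason solidity is indispensable, is the mismatch between the derivation multiplier $|k-l|$, which vanishes on the diagonal, and the weight $v_1(k-l) = 1+|k-l|$, which does not. The splitting $v_1 = 1 + |\cdot|$ recovers the missing diagonal contribution from $\norm{A}_{\mA}$ itself, which is precisely the role of the summand $|A|$ in $M$. Without solidity one could not pass from the entrywise identity $|\tilde A| = M$ back to membership in $\mA$ and a norm bound, since a general matrix norm need not be monotone under entrywise domination.
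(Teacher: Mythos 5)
Your proof is correct and follows essentially the same route as the paper's, which compresses both directions into the single chain $\|\tilde A\|_{\mA}\le \|A\|_{\mA}+\|[X,A]\|_{\mA}=\|A\|_{\mD(\delta)}\le 2\cdot 2\pi\|\tilde A\|_{\mA}$ using exactly the pointwise splitting $v_1=1+|\cdot|$ and solidity that you describe. Your version merely spells out the bookkeeping (and is, if anything, slightly sharper in the constants), so there is nothing to add.
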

\begin{proof}
  Recall that $\tilde A (k,l) = A(k,l) v_1(k-l) = A(k,l) (1+|k-l|)$. Since the norm of $A\in \mA$ depends only on
  the absolute values of the entries of $A$, we obtain that
$$
\|A\|_{\mA _{v_1}} = \|\tilde A \|_{\mA } \leq \|A \|_{\mA } + \|\, [X,A] \|_{\mA } = \|A \|_{\mD (\delta )} \leq
2\cdot 2 \pi  \|A\|_{v_1} \, ,
$$
as claimed.
\end{proof}
\end{ex}
If $\delta$ is a densely defined $*$-derivation of a $C^*$-algebra \mA , then by a result in ~\cite{bratteli75}
$\one \in \domd$ and $\domd$ is \IC\ in \mA.  In \cite{MR1221047} this result was extended to densely defined
derivations on arbitrary \BA s without involution structure. We need
an extension for derivations that are not necessarily densely defined.
\begin{thm} \label{thm:derivationsIC} Let \mA\ be a symmetric \BA, and
  $\delta$ a symmetric  derivation on
  \mA. 
  If $\one \in \domd$, then $\domd$ is \IC\ in \mA\ and $\domd$ is a
  symmetric \BA .  Then the quotient rule
  \[
  \delta(a^{-1})=- a^{-1}\delta(a)a^{-1}
  \]
  is valid, and yields the explicit norm estimate
  \[
  \norm{a^{-1}}_{\mD(\delta)} \leq \norm{a^{-1}}^2_\mA \norm{a}_{\mD(\delta)}.
  \]
\end{thm}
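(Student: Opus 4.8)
The plan is to obtain inverse-closedness from the symmetric structure of \mA\ by combining Hulanicki's Lemma (Proposition~\ref{prop:lemma-hulanicki}) with Brandenburg's trick, and then to read off the quotient rule and the norm estimate as elementary consequences of that.

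First I would record the preliminary algebraic facts. Applying $\delta$ to $\one=\one\cdot\one$ and using the Leibniz rule~(\ref{eq:derivation}) gives $\delta(\one)=2\delta(\one)$, hence $\delta(\one)=0$; here the hypothesis $\one\in\domd$ is essential. Since $\delta$ is closed, $\domd$ equipped with the graph norm is complete, and by the remark following~(\ref{eq:derivation}) it is a Banach algebra (indeed the graph norm is submultiplicative); the symmetry of $\delta$ ($\mD=\mD^*$ and $\delta(a^*)=\delta(a)^*$) makes $\domd$ a Banach $*$-subalgebra of \mA\ with common unit and involution. This completeness, which rests precisely on the closedness of $\delta$, is what permits the abstract machinery to apply.

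The core step is to verify the Brandenburg inequality~(\ref{eq:25}) for the graph norm. Expanding $\|ab\|_{\domd}=\|ab\|_\mA+\|\delta(ab)\|_\mA$ and bounding $\|\delta(ab)\|_\mA\leq\|a\|_\mA\|\delta(b)\|_\mA+\|\delta(a)\|_\mA\|b\|_\mA$ via Leibniz, one checks directly that
\[
\|ab\|_{\domd}\leq\|a\|_{\domd}\|b\|_\mA+\|b\|_{\domd}\|a\|_\mA
\]
for all $a,b\in\domd$, so~(\ref{eq:25}) holds with $C=1$. Applying this with $a=b=c^n$ (legitimate since $\domd$ is an algebra) and taking $n$-th roots as in Brandenburg's trick yields $\rho_{\domd}(c)=\rho_\mA(c)$ for every $c\in\domd$, in particular for $c=c^*$. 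Hulanicki's Lemma then shows that $\domd$ is \IC\ in \mA; and since \mA\ is symmetric, spectral invariance forces $\sigma_{\domd}(a^*a)=\sigma_\mA(a^*a)\subseteq[0,\infty)$, so $\domd$ is itself a symmetric \BA .

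Finally, for invertible $a\in\domd$ with $a\inv\in\mA$, inverse-closedness gives $a\inv\in\domd$; applying $\delta$ to $a\,a\inv=\one$ and using $\delta(\one)=0$ yields $a\,\delta(a\inv)=-\delta(a)a\inv$, whence the quotient rule $\delta(a\inv)=-a\inv\delta(a)a\inv$. Taking $\mA$-norms gives $\|a\inv\|_{\domd}=\|a\inv\|_\mA+\|a\inv\delta(a)a\inv\|_\mA\leq\|a\inv\|_\mA+\|a\inv\|_\mA^2\|\delta(a)\|_\mA$, and since $1\leq\|a\|_\mA\|a\inv\|_\mA$ this is bounded by $\|a\inv\|_\mA^2\|a\|_{\domd}$, as claimed. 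I expect the only genuine subtlety to be the reliance on the symmetry of \mA: it is what converts the trivial Leibniz-type inequality into inverse-closedness through Hulanicki's Lemma, and if one wished to drop the involution one would instead have to invoke the more delicate approximation argument of~\cite{MR1221047}.
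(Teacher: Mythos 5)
Your proof is correct and follows the same overall route as the paper: both reduce inverse-closedness to the equality of spectral radii $\rho_{\domd}=\rho_\mA$ on self-adjoint elements, invoke Hulanicki's Lemma (Proposition~\ref{prop:lemma-hulanicki}) to get \IC ness and the symmetry of $\domd$, and then obtain the quotient rule and the norm bound from the Leibniz rule exactly as you do. The only difference is the mechanism for the spectral-radius step: you verify the two-norm inequality $\norm{ab}_{\domd}\leq\norm{a}_{\domd}\norm{b}_\mA+\norm{b}_{\domd}\norm{a}_\mA$ and apply Brandenburg's trick, whereas the paper proves $\norm{\delta(a^n)}_\mA\leq n\norm{a}_\mA^{n-1}\norm{\delta(a)}_\mA$ by induction and takes $n$-th roots directly; both are valid, and your variant is precisely the argument the authors themselves deploy later for the approximation spaces $\mE_r^p(\mA)$.
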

\begin{proof}
  The proof in \cite{bratteli75} uses functional calculus and could be adapted to the setting of the theorem. We
  prefer a short conceptual argument based on Hulanicki's Lemma (Proposition~\ref{prop:lemma-hulanicki}). We show
  that $\rho_{\domd}(a)=\rho_{\mA}(a)$ for any $a=a^*$ in $\domd$. Using the inequality
  \[
  \norm{\delta (a^n)}_\mA \leq n \norm{a}^{n-1}_\mA \norm{\delta(a)}_\mA
  \]
  which can be established by induction, we estimate the norm of $a^n$ by
  \begin{equation*}
    \norm{a^n}_{\domd}=\norm{a^n}_\mA+\norm{\delta(a^n)}_\mA \leq \norm{a}_\mA^n+n \norm{a}_\mA^{n-1}\norm{\delta (a)}_\mA.
  \end{equation*}
  Taking $n$-th roots on both sides and letting $n$ go to infinity, we obtain $ \rho_{\domd}(a) \le
  \norm{a}_\mA$, and consequently $ \rho_{\domd}(a) \le \rho_\mA(a)$. The reverse inequality $ \rho_{\mA}(a) \leq
  \rho_{\domd}(a)$ is always true for \BA s, since $\mD (\delta , \mA
  )\subseteq \mA $,  so Proposition~\ref{prop:lemma-hulanicki} implies
  that $\mD(\delta)$ is  \IC\ in \mA. Consequently $\sigma _{\domd }
  (a^*a) = \sigma _{\mA }(a^*a)\subseteq [0,\infty )$ for all $a\in
  \domd  $,  and thus $\domd $ is a symmetric \BA . 

  Thus, if $a\in \mD (\delta )$ and $a^{-1} \in \mA $, then $a^{-1}
  \in \mD (\delta )$ and so $\delta (a^{-1})$ is well-defined in
  $\mA$. Therefore  the 
  quotient rule and the norm inequality follow from  the Leibniz
  rule $0
  =\delta(\one)=\delta(a a^{-1})= \delta (a) a^{-1} + a \delta
  (a^{-1})$. 
\end{proof}
\begin{rems}
  Theorem~\ref{thm:derivationsIC} is remarkable because it yields an explicit norm control of the inverse in the
  subalgebra $\domd$. Results of this type are very  rare, see
  \cite{Nikolski99} for typical no-go results.
\end{rems}

\subsection*{Commuting Derivations}
\label{sec:comm-deriv}
The formulation of \IC ness results for matrices over \bbZd ,   and the
definition of higher orders of smoothness 
require  derivations for each ``dimension'' of the index set \bbZd.  

Let $\{\delta_1,\cdots, \delta_d\}$ be a set of commuting derivations
on the \BA\ \mA. Since products of unbounded operators and their
domains are a subtle and rather technical subject with many
pathologies, we will make the following  assumptions and thus
avoid many technicalities.

The domain of a finite product $\delta _{r_1} \delta _{r_2} \dots
\delta _{r_n}$, $1\leq r_j \leq d$ is defined by induction as 
$$
\mD (\delta _{r _1} \delta _{r _2} 
\dots \delta _{r _n}) = \mD (\delta _{r _1}, \mD (\delta _{r _2} 
\dots \delta _{r _n})) \, .
$$

We will assume througout that the operator $\delta _{r_1} \delta _{r_2} \dots
\delta _{r_n}$ and its  domain \linebreak[4] $\mD (\delta _{r _1} \delta _{r _2} 
\dots \delta _{r _n})$ are independent of the order of the factors
$\delta _{r_j}$. 

Then for every multi-index $\alpha$ the operator $\delta^\alpha
=\prod_{1 \leq k \leq d}\delta_k^{\alpha_k}$ and 
its domain $\mD(\delta^\alpha)$ are well defined.
  In analogy to $C^k(\bbRd)$ we equip $\mD(\delta^\alpha)$ with the norm
$$
\norm{a}_{\mD(\delta^\alpha)} =  \sum_{ \beta \leq 
  \alpha}\norm{\delta^\beta(a)}_\mA\, .$$

Since $\delta _j$ is assumed to be a closed operator on $\mA $, it
follows that $\delta _j$ is a closed operator on $\mD (\delta ^\alpha
)$. 


\begin{defn}
  Let \mA\ be a \BA\ and $k$ a nonnegative integer. 
  The \emph{derived space of order k} is
  \begin{equation*}
    \mA^{(k)}=\bigcap_{\abs{\alpha} \leq k} \mD(\delta^\alpha),
    \qquad \text{ and }  \qquad     \mA^{(\infty)}=\bigcap_{k=0}^\infty \mA^{(k)}.
  \end{equation*}
\end{defn}
We summarize the results on commuting derivations.
\begin{lem}
  Let $\{\delta_k : 1 \leq k \leq d\}$, be a set of commuting derivations on the \BA\ \mA.
\begin{enumerate}
  \item[(i)] Then $\mD(\delta^\alpha)$ is a (not necessarily unital)
    subalgebra of \mA\ for every  $\alpha \in \mathbb{N}_0^d$.

  \item[(ii)] Let $\mathcal{R} \subseteq \bbNd _0$ be an arbitrary finite index set and set
  \[
  \mD_{\mathcal{R}}(\delta)=\bigcap_{\alpha \in \mathcal{R}} \mD(\delta^\alpha) \, .
  \]
  Then $\mD_{\mathcal{R}}(\delta) $ is a Banach-subalgebra of \mA\ with the norm 
  $\norm{a}_{\mD_{\mathcal{R}}(\delta)}= \sum\limits_{\alpha \in \mathcal{R}}\norm{a}_{\mD(\delta^\alpha)}$. In
  particular $\mA^{(k)}$ is a Banach-subalgebra of \mA.
\end{enumerate}
\end{lem}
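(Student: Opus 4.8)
The plan is to reduce everything to a multivariate Leibniz rule and then read off the norm and completeness properties. The backbone is the identity
\[
\delta^\alpha(ab) = \sum_{\beta \leq \alpha} \binom{\alpha}{\beta}\, \delta^\beta(a)\,\delta^{\alpha - \beta}(b), \qquad \binom{\alpha}{\beta} = \prod_{k=1}^d \binom{\alpha_k}{\beta_k},
\]
valid for $a,b \in \mD(\delta^\alpha)$, which simultaneously shows $ab \in \mD(\delta^\alpha)$ and delivers the quantitative bound. I would prove this formula by induction on $\abs\alpha$, the base case $\abs\alpha = 1$ being the ordinary Leibniz rule, which already tells us that each $\mD(\delta_k)$ is a subalgebra of \mA. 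Note also that $a \in \mD(\delta^\alpha)$ forces $a \in \mD(\delta^\beta)$ for every $\beta \leq \alpha$ — this is where the standing order-independence assumption enters, since it lets me factor $\delta^\alpha = \delta^{\alpha-\beta}\delta^\beta$ with matching domains — so the norm $\norm{a}_{\mD(\delta^\alpha)} = \sum_{\beta \leq \alpha}\norm{\delta^\beta(a)}_\mA$ is well defined.

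For the inductive step I would write $\delta^\alpha = \delta_j\,\delta^{\alpha - e_j}$ for some $j$ with $\alpha_j \geq 1$, expand $\delta^{\alpha - e_j}(ab)$ by the inductive hypothesis, and check that each summand $\delta^\gamma(a)\,\delta^{\alpha - e_j - \gamma}(b)$ lies in $\mD(\delta_j)$: indeed $\gamma + e_j \leq \alpha$ and $\alpha - \gamma \leq \alpha$ give $\delta^\gamma(a),\,\delta^{\alpha - e_j - \gamma}(b) \in \mD(\delta_j)$, and $\mD(\delta_j)$ is a subalgebra. Applying the single-variable Leibniz rule termwise and reassembling via Pascal's identity for multi-indices yields the claimed formula; since a finite sum of elements of $\mD(\delta_j)$ remains in $\mD(\delta_j)$, we conclude $\delta^{\alpha - e_j}(ab) \in \mD(\delta_j)$, i.e. $ab \in \mD(\delta^\alpha)$. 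This proves (i).

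The norm estimate follows by taking $\mA$-norms in the Leibniz formula: for $\beta \leq \alpha$,
\[
\norm{\delta^\beta(ab)}_\mA \leq \sum_{\gamma \leq \beta} \binom{\beta}{\gamma} \norm{\delta^\gamma(a)}_\mA \norm{\delta^{\beta - \gamma}(b)}_\mA \leq C_\alpha\, \norm{a}_{\mD(\delta^\alpha)}\,\norm{b}_{\mD(\delta^\alpha)},
\]
using $\norm{\delta^\gamma(a)}_\mA \leq \norm{a}_{\mD(\delta^\alpha)}$ whenever $\gamma \leq \alpha$. Summing over $\beta \leq \alpha$ gives $\norm{ab}_{\mD(\delta^\alpha)} \leq C'_\alpha \norm{a}_{\mD(\delta^\alpha)}\norm{b}_{\mD(\delta^\alpha)}$, which by the renormalization remark of Section~\ref{sec:concepts-from-theory} makes $\mD(\delta^\alpha)$ a \BA\ once completeness is known. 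Completeness I would obtain by induction on $\abs\alpha$ from the closedness of each $\delta_j$ on $\mD(\delta^\alpha)$: a sequence Cauchy in the graph-type norm has all derivatives $\delta^\beta(a_n)$ convergent in \mA, and closedness identifies the limits as the derivatives of $\lim a_n$.

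For part (ii), since $\mathcal{R}$ is finite, $\mD_{\mathcal{R}}(\delta)$ is a finite intersection of the Banach algebras $\mD(\delta^\alpha)$ under the sum norm. Closure under multiplication is immediate from (i) applied to each $\alpha \in \mathcal{R}$; submultiplicativity up to a constant follows by summing the per-$\alpha$ estimates and bounding $\norm{a}_{\mD(\delta^\alpha)} \leq \norm{a}_{\mD_{\mathcal{R}}(\delta)}$; and completeness is the standard fact that a finite intersection of Banach spaces under the sum of their norms is complete. Taking $\mathcal{R} = \set{\alpha : \abs\alpha \leq k}$ gives the assertion for $\mA^{(k)}$. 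I expect the one genuinely delicate point to be the domain bookkeeping in the inductive step of the Leibniz rule — verifying that every intermediate product of derivatives actually lands in $\mD(\delta_j)$ so that $\delta_j$ may legitimately be applied — which rests squarely on the order-independence hypothesis; the norm and completeness arguments are routine once the formula is in hand.
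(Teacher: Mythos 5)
Your proof is correct and follows essentially the same route as the paper's: the general multivariate Leibniz rule $\delta^\alpha(ab)=\sum_{\beta\leq\alpha}\binom{\alpha}{\beta}\delta^\beta(a)\delta^{\alpha-\beta}(b)$ is the key identity, taking norms gives submultiplicativity, and part (ii) reduces to the fact that a finite intersection of Banach algebras is a Banach algebra. You merely spell out in more detail the induction on $\abs{\alpha}$, the domain bookkeeping via the order-independence assumption, and the completeness via closedness of the $\delta_j$, all of which the paper leaves implicit.
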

  \begin{proof}
We first remark that the Leibniz rule \eqref{eq:derivation} implies
the general Leibniz rule
\begin{equation}
  \label{leib}
  \delta ^\alpha (ab) = \sum _{\beta \leq \alpha }
  \binom{\alpha}{\beta} \delta ^\beta (a) \delta ^{\alpha-\beta } (b)
  \, .
\end{equation}
If $a,b \in \mD (\delta ^\alpha )$, i.e., $\delta ^\beta (a), \delta
^\beta (b) \in \mA $ for $\beta \leq \alpha $, then clearly $ab \in
\mD (\delta ^\alpha )$ and the norm inequality $\|ab\|_{\mD (\delta
  ^\alpha )} \leq C \|a\|_{\mD (\delta   ^\alpha )} \|b\|_{\mD (\delta
  ^\alpha )} $ follows  after taking norms in \eqref{leib}. Since the
finite intersection of \BA s is a \BA , $\mA ^{(k)}$ and $\mD
_{\mathcal{R} }(\delta )$ are \BA s.  
  \end{proof}
  \begin{prop} \label{prop_comm-deriv} Assume that \mA\ is a symmetric
    Banach algebra with a set of commuting 
    symmetric derivations $\{ \delta _k : 1\leq k \leq d\} $ satisfying $\one \in \mD(\delta_k), 1\leq k \leq d$.
    Then $\mD(\delta ^\alpha )$ is \IC\ in \mA.  Furthermore, the \BA\ $\mD_{\mathcal{R}}(\delta)$ is \IC\
    in \mA , and $\mA^{(\infty)}$ is a Fr\'{e}chet algebra that is \IC\ in \mA.
  \end{prop}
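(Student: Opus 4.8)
The plan is to reduce everything to the single-derivation result of Theorem~\ref{thm:derivationsIC}, using two closure properties of inverse-closedness. The first is transitivity, recorded in Section~\ref{sec:concepts-from-theory}. The second is stability under finite intersections: if $\mA_1$ and $\mA_2$ are both \IC\ in $\mB$ and $a\in\mA_1\cap\mA_2$ satisfies $a^{-1}\in\mB$, then $a^{-1}$ lies in each $\mA_i$ and hence in $\mA_1\cap\mA_2$, so any finite intersection of subalgebras that are \IC\ in $\mB$ is again \IC\ in $\mB$.

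For $\mD(\delta^\alpha)$ I would build a chain of algebras by peeling off one derivation at a time. Fix any enumeration $\delta_{r_1}\cdots\delta_{r_n}$, $n=\abs{\alpha}$, of the factors of $\delta^\alpha$; by the order-independence assumption each partial product $\delta_{r_{n-j+1}}\cdots\delta_{r_n}$ is again of the form $\delta^{\gamma_j}$, where the multi-indices run $\gamma_0=0,\gamma_1,\dots,\gamma_n=\alpha$, each obtained from the previous by adding the unit vector $e_{r_{n-j+1}}$. Setting $\mA_j=\mD(\delta^{\gamma_j})$ yields a chain $\mA=\mA_0\supseteq\mA_1\supseteq\cdots\supseteq\mA_n=\mD(\delta^\alpha)$ in which, by the inductive definition of the domain, $\mA_j$ is the domain of $\delta_{r_{n-j+1}}$ computed inside $\mA_{j-1}$. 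The inductive claim is that $\delta_{r_{n-j+1}}$ restricts to a closed symmetric derivation on the symmetric \BA\ $\mA_{j-1}$ with $\one$ in its domain; granting this, Theorem~\ref{thm:derivationsIC} shows both that $\mA_j$ is \IC\ in $\mA_{j-1}$ and that $\mA_j$ is again a symmetric \BA, which reproduces the hypothesis. Transitivity then gives that $\mD(\delta^\alpha)=\mA_n$ is \IC\ in $\mA$.

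Verifying this inductive step is the only real obstacle. The Leibniz rule for $\delta_{r_{n-j+1}}$ on $\mA_{j-1}$ is the restriction of the rule valid on $\mA$, and its image lands in $\mA_{j-1}$ because $\mA_j$ is already known, from the preceding Lemma, to be an algebra; closedness on $\mA_{j-1}$ is the assumption recorded just before the definition of the derived spaces; symmetry of the derivation and of its domain propagates because Theorem~\ref{thm:derivationsIC} makes each $\mA_{j-1}$ a symmetric \BA\ and order-independence renders the iterated domains symmetric; and $\one\in\mA_j$ follows from $\delta_k\one=0$, which places $\one$ in the domain of every $\delta^\beta$. The last point is that the graph norm $\|a\|_{\mA_{j-1}}+\|\delta_{r_{n-j+1}}a\|_{\mA_{j-1}}$ is equivalent to the intrinsic norm $\sum_{\beta\le\gamma_j}\|\delta^\beta a\|_\mA$ on $\mA_j$, which is a direct comparison after expanding both sums and absorbing constants.

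The two remaining assertions then follow formally. Both $\mD_{\mathcal{R}}(\delta)=\bigcap_{\alpha\in\mathcal{R}}\mD(\delta^\alpha)$ and $\mA^{(k)}=\bigcap_{\abs{\alpha}\le k}\mD(\delta^\alpha)$ are finite intersections of algebras that are \IC\ in $\mA$, and they are \BA s by the preceding Lemma, so the intersection property makes them \IC\ in $\mA$. Finally $\mA^{(\infty)}=\bigcap_{k\ge0}\mA^{(k)}$ is the projective limit of the Banach algebras $\mA^{(k)}$ along the continuous embeddings $\mA^{(k+1)}\hookrightarrow\mA^{(k)}$; it is thus a Fr\'echet space, complete since a Cauchy sequence converges in each $\mA^{(k)}$ to compatible limits, and multiplication is continuous because it is continuous for each defining norm, so $\mA^{(\infty)}$ is a Fr\'echet algebra. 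Its inverse-closedness is immediate: if $a\in\mA^{(\infty)}$ and $a^{-1}\in\mA$, then $a\in\mA^{(k)}$ with $\mA^{(k)}$ \IC\ in $\mA$, so $a^{-1}\in\mA^{(k)}$ for every $k$, and intersecting over $k$ gives $a^{-1}\in\mA^{(\infty)}$.
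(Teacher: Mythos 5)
Your proof is correct and follows essentially the same route as the paper: an induction that peels off one derivation at a time, applying Theorem~\ref{thm:derivationsIC} at each stage to keep the intermediate domains symmetric and inverse-closed, then transitivity, and finally the observation that finite or infinite intersections of inverse-closed subalgebras are inverse-closed. Your verification of the inductive hypotheses (closedness, the restricted Leibniz rule, $\one$ in each domain, the norm comparison) and of the Fr\'echet-algebra structure of $\mA^{(\infty)}$ is more detailed than what the paper writes out, but the argument is the same.
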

  \begin{proof}
    Let $\delta^\alpha = \delta_{r_n} \dotsm \delta_{r_1}$ with  $n=\abs
    \alpha$ and  $1 \leq r_j \leq d$ for all 
    $j$. By Theorem~\ref{thm:derivationsIC} $\mD (\delta _1, \mA )$ is
    a symmetric \BA\ and inverse-closed in $\mA $. Now we argue by
    induction and assume that $\mD (\delta _{r_j} \dots \delta
    _{r_1})$ is symmetric and inverse-closed in  $\mA$.  Since by
    definition $\mD(\delta _{r_{j+1}} \dots 
    \delta _{1})=\mD(\delta_{r_{j+1}}, \mD (\delta _{r_j} \dots \delta
    _{r_{r_1}}))$ and $\delta _{r_{j+1}}$ is a  closed derivation
    on the symmetric \BA\  $ \mD(\delta_{r_j} \cdots
    \delta_{r_1})$, Theorem~\ref{thm:derivationsIC} asserts
    that $\mD(\delta _{r_{j+1}} \dots     \delta _{r_1})$ is symmetric
    and \IC\ in $\mD (\delta _{r_j} \dots \delta
    _{r_1})$ and thus \IC\ in $\mA $ by transitivity. 
We repeat this argument $n$ times and find  that $\mD (\delta ^\alpha )
= \mD (\delta _{r_n} \dots \delta _{r_1})$ is symmetric and 
\IC\ in $\mA $. 

Finally,  the finite or infinite intersection of inverse-closed
subalgebras of $\mA$ is again \IC\ in $\mA $. Specifically, if $a \in 
  \mD_{\mathcal{R}}(\delta)=\bigcap_{\alpha \in \mathcal{R}} \mD(\delta^\alpha)$
  and $a$ is invertible, then the argument above shows that $a^{-1}
  \in \mD (\delta ^\alpha, \mA )$ for each $\alpha \in \mathcal{R}$, whence
  $a^{-1} \in \mD_{\mathcal{R}}(\delta)$. The argument for $\mA
  ^{(\infty )}$ is the same. 
  \end{proof}

\begin{rem} The \IC ness of $\mA^{(\infty)}$ in \mA\ is implicit in~\cite{beals77}. \end{rem}
  \begin{ex} [Matrix algebras over \bbZd] \label{MAsonbzd}
    If \mA\ is a \MA\ over \bbZd, then we define the derivations $\delta_j(A)(k,l)= [X_j,A](k,l) = 2 \pi i
    (k_j-l_j)A(k,l) , 1\leq j \leq d$. These derivations are symmetric and commute with each other, and $\one \in
    \mD(\delta_j)$ for all $j$.  An application of Proposition ~\ref{prop_comm-deriv} gives that all spaces
    $\mD_{\mathcal{R}}(\delta)$ are \IC\ subalgebras of \mA.

    If \mA\ is solid there is an immediate generalization of Proposition \ref{prop:solid_MA_domain} to matrix
    algebras over the index set $\bbZd$.
    \begin{prop} \label{prop-solidmazd}
      Let \mA\ be a solid \MA\ over \bbZd. Then $\mA^{(m)}= \mA_{v_m}$ . In particular, $\mA_{v_m}$ is an
      \IC\ subalgebra of \mA.
    \end{prop}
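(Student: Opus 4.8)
The plan is to reduce everything to the entrywise action of the derivations and to exploit the solidity of \mA\ exactly as in the one-dimensional Proposition~\ref{prop:solid_MA_domain}; the only genuinely new ingredient is a multinomial comparison of weights. First I would record that, because the $\delta_j$ commute and act diagonally on the entries, $\delta^\alpha(A)(k,l)=(2\pi i)^{\abs\alpha}(k-l)^\alpha A(k,l)$ for every multi-index $\alpha$, where $(k-l)^\alpha=\prod_{j=1}^d(k_j-l_j)^{\alpha_j}$. As in Proposition~\ref{prop:solid_MA_domain}, for solid \mA\ the norm depends only on the absolute values of the entries, so $A\in\mD(\delta^\alpha)$ is equivalent to finiteness of $\norm{P_\alpha A}_\mA$, where $P_\alpha A$ denotes the matrix with nonnegative entries $\abs{(k-l)^\alpha}\,\abs{A(k,l)}$, and moreover $\norm{\delta^\alpha A}_\mA=(2\pi)^{\abs\alpha}\norm{P_\alpha A}_\mA$.

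The combinatorial heart is the elementary weight equivalence
\[
(1+\abs x)^m \;\asymp\; \sum_{\abs\alpha\le m}\;\prod_{j=1}^d\abs{x_j}^{\alpha_j}, \qquad x\in\bbRd,
\]
with constants depending only on $m$ and $d$. This follows at once from the multinomial theorem applied to $(1+\abs{x_1}+\dots+\abs{x_d})^m$: the expansion is a positive linear combination of precisely the monomials $\prod_j\abs{x_j}^{\alpha_j}$ with $\abs\alpha\le m$, so the two sides dominate each other up to the largest and smallest multinomial coefficients. Applied pointwise at $x=k-l$, this writes the weight $v_m(k-l)=(1+\abs{k-l})^m$ entrywise as a positive combination $\sum_{\abs\alpha\le m}c_\alpha\abs{(k-l)^\alpha}$.

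With these two facts the equality $\mA^{(m)}=\mA_{v_m}$ follows from two solidity estimates. For $\mA^{(m)}\subseteq\mA_{v_m}$, the entrywise identity $\abs{\tilde A}=\sum_{\abs\alpha\le m}c_\alpha P_\alpha A$ and the triangle inequality give $\norm{A}_{\mA_{v_m}}=\norm{\tilde A}_\mA\le\sum_{\abs\alpha\le m}c_\alpha(2\pi)^{-\abs\alpha}\norm{\delta^\alpha A}_\mA$. For the reverse inclusion, $\abs\alpha\le m$ forces $\abs{(k-l)^\alpha}\le\abs{k-l}^{\abs\alpha}\le(1+\abs{k-l})^m$, so each $P_\alpha A$ is dominated entrywise by $\abs{\tilde A}$, whence by solidity $\norm{\delta^\alpha A}_\mA=(2\pi)^{\abs\alpha}\norm{P_\alpha A}_\mA\le(2\pi)^m\norm{A}_{\mA_{v_m}}$; summing over $\abs\alpha\le m$ controls $\norm{A}_{\mA^{(m)}}$. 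I would also note that the intersection norm $\norm{A}_{\mA^{(m)}}=\sum_{\abs\alpha\le m}\sum_{\beta\le\alpha}\norm{\delta^\beta A}_\mA$ is comparable to $\sum_{\abs\alpha\le m}\norm{\delta^\alpha A}_\mA$, each relevant $\delta^\beta$-term occurring a bounded number of times. Together these yield $\mA^{(m)}=\mA_{v_m}$ with equivalent norms.

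Inverse-closedness is then inherited for free: $\mA^{(m)}=\mD_{\mathcal{R}}(\delta)$ for the finite set $\mathcal{R}=\set{\alpha:\abs\alpha\le m}$, which is \IC\ in \mA\ by Example~\ref{MAsonbzd} (an application of Proposition~\ref{prop_comm-deriv}), and transporting this across the norm equivalence shows $\mA_{v_m}$ is \IC\ in \mA; that $\mA_{v_m}$ is itself a solid \MA\ is Proposition~\ref{prop:solid-ma}. The only point requiring care — and the main obstacle — is the bookkeeping with solidity and the nested intersection norm, namely using the domination $P_\alpha A\le\abs{\tilde A}$ and the reconstruction $\abs{\tilde A}=\sum_\alpha c_\alpha P_\alpha A$ on the correct sides; all analytic content is already packaged in Proposition~\ref{prop:solid_MA_domain} and the multinomial identity.
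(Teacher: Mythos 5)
Your proof is correct and follows essentially the same route as the paper, which simply says the identity $\mA^{(m)}=\mA_{v_m}$ "is proved as in Proposition~\ref{prop:solid_MA_domain}" and cites Proposition~\ref{prop_comm-deriv} for inverse-closedness; you merely make explicit the multinomial comparison $(1+\abs{x})^m\asymp\sum_{\abs\alpha\le m}\prod_j\abs{x_j}^{\alpha_j}$ and the solidity bookkeeping that the authors leave to the reader. No gaps.
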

    \begin{proof}
      The identity $\mA^{(m)} = \mA_{v_m}$ is proved as in Proposition~\ref{prop:solid_MA_domain}. The
      inverse-closedness follows from Proposition~\ref{prop_comm-deriv}.
    \end{proof}

    Using the characterization of the standard \MA s over \bbZd\ by weights (\ref{eq:ch8}) we spell out the preceeding result for these algebras.
    \begin{cor}
      For $k \in \bbN$ the algebra $\mC_k$ is \IC\ in $\mC_0$. Likewise, $\mS_k$ is \IC\ in $\mS_0$, and
      $\mJ_{r+k}$ is \IC\ in $\mJ_r$ for every $r>d$.
    \end{cor}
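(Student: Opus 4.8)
The plan is to read the corollary off directly from Proposition~\ref{prop-solidmazd}, once the standard algebras are recognized as solid and identified with the appropriate weighted algebras via (\ref{eq:ch8}). There is essentially no new work: the corollary is the concrete shadow, cast on the standard decay scales, of the abstract statement ``$\mA_{v_m}$ is \IC\ in $\mA$ for solid $\mA$''.

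First I would verify that the base algebras $\mC_0$, $\mS_0$, and $\mJ_r$ (for $r>d$) are solid matrix algebras over $\bbZd$. That they are Banach $*$-algebras is part~(i) of Proposition~\ref{fundamental}, and solidity is immediate from the defining norms (\ref{eq:basknorm}), (\ref{eq:schurnorm}), and (\ref{eq:jaffnorm}): each of these depends on the entries only through their absolute values $\abs{A(k,l)}$, so an entrywise domination $\abs{A(k,l)} \leq \abs{B(k,l)}$ forces $\norm{A}_\mA \leq \norm{B}_\mA$. This is precisely the solidity condition recorded just after (\ref{eq:schurnorm}). I would also note that the polynomial weight $v_m(x) = (1+\abs x)^m$ is an even submultiplicative weight for $m \geq 0$, so each $\mA_{v_m}$ is defined and is itself a solid matrix algebra by Proposition~\ref{prop:solid-ma}.

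Next I would apply Proposition~\ref{prop-solidmazd}: for any solid matrix algebra $\mA$ over $\bbZd$ and any $m \in \bbN$ it gives $\mA^{(m)} = \mA_{v_m}$ together with the assertion that $\mA_{v_m}$ is \IC\ in $\mA$. Finally, invoking the weight identities (\ref{eq:ch8}): the choice $\mA = \mC_0$, $m=k$ yields $\mC_k = (\mC_0)_{v_k}$, hence \IC\ in $\mC_0$; the choice $\mA = \mS_0$ yields $\mS_k = (\mS_0)_{v_k}$, \IC\ in $\mS_0$; and for fixed $r>d$ the choice $\mA = \mJ_r$ yields $\mJ_{r+k} = (\mJ_r)_{v_k}$, \IC\ in $\mJ_r$. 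The only point meriting explicit verification — and it is a line, not an obstacle — is the solidity of the three base algebras, since that is the sole hypothesis of Proposition~\ref{prop-solidmazd}; everything else is bookkeeping that matches the standard off-diagonal decay scales to the abstract weighted construction.
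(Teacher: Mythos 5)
Your proposal is correct and follows exactly the paper's route: the corollary is stated there as an immediate consequence of Proposition~\ref{prop-solidmazd} combined with the weight identities (\ref{eq:ch8}), with the solidity of $\mC_0$, $\mS_0$, and $\mJ_r$ already recorded in Proposition~\ref{fundamental}(i). Your explicit check of solidity is the only addition, and it is the right hypothesis to verify.
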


  \end{ex}

  The value of Proposition~\ref{prop_comm-deriv} lies in its potential
  to treat anisotropic decay conditions. As 
  an example we state the following anisotropic generalization of Jaffard's theorem.

\begin{prop}\label{jaffaniso}
  Let $A$ be a matrix over $\bbZd$, $r>d$, and $\alpha=(\alpha_1,
  \dots , \alpha_d) \in \bbNd _0$. If $A$ is
  invertible on $\ell ^2(\bbZd)$ and satisfies the anisotropic off-diagonal decay condition
  \begin{equation}
    \label{eq:ch9a}
    |A(k,l)| \leq C (1+|k-l|)^{-r} \prod _{j=1}^d
    (1+|k_j-l_j|)^{-\alpha_j}\, ,    \qquad k,l \in \bbZd \, ,
  \end{equation}
  then the entries of the inverse matrix $A^{-1}$ satisfy an estimate of the same type
$$
|(A^{-1}(k,l)| \leq C' (1+|k-l|)^{-r} \prod _{j=1}^d
(1+|k_j-l_j|)^{-\alpha_j}\, , \qquad k,l \in \bbZd \, .
$$
\end{prop}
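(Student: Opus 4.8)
The plan is to recognize the anisotropic decay condition \eqref{eq:ch9a} as membership in a weighted Jaffard algebra, to identify that algebra with a finite intersection of derived spaces, and then to let the abstract machinery of Proposition~\ref{prop_comm-deriv} do the work. Throughout I take the base algebra to be $\mA=\mJ_r$, which by Proposition~\ref{fundamental} is a solid Banach $*$-algebra that is \IC\ in $\bopzd$, and I use the commuting symmetric derivations $\delta_j(A)(k,l)=2\pi i(k_j-l_j)A(k,l)$ from Example~\ref{MAsonbzd}, which satisfy $\one\in\mD(\delta_j)$ for every $j$.

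First I would observe that the hypothesis \eqref{eq:ch9a} is exactly the statement $A\in(\mJ_r)_w$, where $w$ is the anisotropic weight $w(k)=\prod_{j=1}^d(1+|k_j|)^{\alpha_j}$. Indeed, by the definition \eqref{eq:1a} the relation $A\in(\mJ_r)_w$ means $|A(k,l)|\,w(k-l)\,(1+|k-l|)^{r}\leq C$, which rearranges precisely to \eqref{eq:ch9a}. Note that $w$ is even and submultiplicative, so $(\mJ_r)_w$ is already a solid matrix algebra by Proposition~\ref{prop:solid-ma}; what remains is to produce its inverse-closedness.

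The key step is to identify $(\mJ_r)_w$ with a finite intersection of derived spaces. Let $\mathcal{R}=\set{\beta\in\bbNd _0:\beta\leq\alpha}$ and consider $\mD_{\mathcal{R}}(\delta)=\bigcap_{\beta\leq\alpha}\mD(\delta^\beta)$. Since $\delta^\beta$ multiplies the $(k,l)$-entry by $(2\pi i)^{|\beta|}\prod_j(k_j-l_j)^{\beta_j}$, solidity of $\mJ_r$ gives that $A\in\mD(\delta^\beta,\mJ_r)$ is equivalent to $|A(k,l)|\prod_j|k_j-l_j|^{\beta_j}\leq C_\beta(1+|k-l|)^{-r}$. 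The binomial identity $\prod_j(1+|k_j-l_j|)^{\alpha_j}=\sum_{\beta\leq\alpha}\binom{\alpha}{\beta}\prod_j|k_j-l_j|^{\beta_j}$, together with the elementary bound $\prod_j|k_j-l_j|^{\beta_j}\leq w(k-l)$ valid for $\beta\leq\alpha$, shows that the whole family of conditions over $\beta\leq\alpha$ is equivalent, with equivalent norms, to the single condition $A\in(\mJ_r)_w$; hence $\mD_{\mathcal{R}}(\delta)=(\mJ_r)_w$. This is the anisotropic analogue of Proposition~\ref{prop-solidmazd}, and it is the only place where a genuine (if routine) computation enters. I expect this reduction to require the most care, since one must check that the norm equivalence is uniform and that solidity is invoked correctly in both directions.

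Finally I would invoke the abstract results. By Proposition~\ref{prop_comm-deriv} the algebra $\mD_{\mathcal{R}}(\delta)=(\mJ_r)_w$ is \IC\ in $\mJ_r$; since $\mJ_r$ is \IC\ in $\bopzd$ by Proposition~\ref{fundamental}(iii), transitivity of inverse-closedness yields that $(\mJ_r)_w$ is \IC\ in $\bopzd$. As $A$ satisfies \eqref{eq:ch9a} we have $A\in(\mJ_r)_w$, and the invertibility of $A$ on $\ell^2(\bbZd)$ means $A^{-1}\in\bopzd$. Inverse-closedness then forces $A^{-1}\in(\mJ_r)_w$, which is exactly the claimed decay estimate for the entries of $A^{-1}$.
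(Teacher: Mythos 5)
Your proposal is correct and follows essentially the same route as the paper: recognize condition \eqref{eq:ch9a} as membership in $\mD(\delta^\alpha,\mJ_r)$ (your $\mD_{\mathcal{R}}(\delta)$ with $\mathcal{R}=\set{\beta\leq\alpha}$ is exactly this space, given that the paper norms $\mD(\delta^\alpha)$ by $\sum_{\beta\leq\alpha}\norm{\delta^\beta(\cdot)}$), and then apply Proposition~\ref{prop_comm-deriv} together with Proposition~\ref{fundamental}(iii) and transitivity. Your identification step, carried out via the binomial expansion of $\prod_j(1+|k_j-l_j|)^{\alpha_j}$ and solidity in both directions, is in fact somewhat more careful than the paper's one-line assertion that \eqref{eq:ch9a} is equivalent to $\delta^\alpha A\in\mJ_r$ alone.
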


\begin{proof}
 The off-diagonal decay condition is equivalent to saying that the matrix $\tilde A$ with entries $\tilde A(k,l)
  = \prod _{j=1}^d (k_j-l_j)^{\alpha_j} A(k,l)$ is in the Jaffard algebra $\mJ _r$.  But $\tilde A$ is just a multiple of $\prod
  _{j=1}^d \delta _j ^{\alpha_j} A = \delta ^\alpha A$, where $\delta_j(A) $ is defined in Example~\ref{MAsonbzd}. Since $\mD (\delta ^\alpha, \mJ_r)$ is inverse-closed in $\mJ _r$ by
  Proposition~\ref{prop_comm-deriv} and $\mJ _r$ is inverse-closed in $\bop$, $A^{-1}$ is again in $\mD (\delta
  ^\alpha, \mJ_r)$, which is nothing but the off-diagonal decay stated.
\end{proof}
\subsection{Automorphism Groups and Continuity}
\label{Automorphism groups and continuity}
Our next step is to treat the algebras $\mA_{v_r}$ with non-integer parameter $r$ in analogy to spaces with
fractional smoothness.  Two natural approaches are fractional powers of the generators or automorphism groups and
the associated H\"older-Zygmund continuity. We choose the latter approach and introduce a new structure, namely
automorphism groups. This choice is also  motivated by the failure to distinguish between the spaces $\mD(\frac{d}{dx},
L^\infty(\bbT))=\set{ f \in \Lip(\bbT): f' \in L^\infty(\bbT)}$ and $\mD(\frac{d}{dx}, C(\bbT))=C^1(\bbT)$ by means of
derivations alone. To explain this difference, we need to consider derivations that are generators of groups of
automorphisms.

An \emph{automorphism group}, more precisely a $d$-parameter
automorphism group acting on \mA ,  is a set of \BA\ automorphisms $\Psi=\set{\psi_t}_{t \in \bbRd}$ of
\mA\ with the group properties
\begin{equation}
  \psi_s \psi_t=\psi_{s+t} \quad \text{for all} \quad s,t \in \bbR^d.
\end{equation}
If \mA\ is a $*$-algebra, we assume that $\Psi$ consists of $*$-automorphisms.  In addition, we assume that
$\Psi$ is a \emph{uniformly bounded} automorphism group, that is,
\[
M_\Psi= \sup_{t\in \bbRd}\norm{\psi_t}_{\mA \to \mA} < \infty \, .
\]
This is all we need, but  clearly the abstract theory works for much more general group actions
~\cite{grushka07,Torba08}. 

An element $a$ of \mA\ is \emph{\cont}, if
\begin{equation}
  \label{strongcont}
  \norm{\psi_t(a)-a}_\mA \to 0 \text{ for } t \to 0.
\end{equation}
The set of \cont\ elements of \mA\ is denoted by $C(\mA)$.

\begin{ex}
  The classical example is the translation group $\set{T_x:x \in \bbRd}$.  
  For $\mA =L^\infty(\bbRd)$ the \cont\ elements are
  the functions in 
  $C(L^\infty(\bbRd)) = C_u(\bbRd)$, where $C_u(\bbRd)$
  denotes the space of bounded uniformly \cont\ functions on $\bbRd$.
\end{ex}

For $t \in \bbRd\setminus\set{0}$ the \emph{generator} $\delta_t$ is
\begin{equation}\label{eq:generator}
  \delta_t (a) = \lim_{h \to 0} \frac{\psi_{ht}(a)-a }{h}
\end{equation}
The domain of $\delta_t$ is the set of all $a \in \mA$ for which this limit exists. 
The \emph{canonical generators} of $\Psi$ are
$\delta_{e_k}$ and $\Psi$ is called the \emph{automorphism group generated by} $(\delta_{e_k})_{1\le k\le d}$.
Each generator $\delta_t$, $t \in \bbRd \setminus \set{0}$, is a closed derivation. If \mA\ is a
Banach $*$-algebra, then $\delta_t$ is a $*$-derivation~\cite{bratrob87}.

\begin{rems}
  (1) In a $C^*$-algebra all automorphisms are isometries. This is no longer true for symmetric algebras.

  (2) In the theory of operator algebras it is usually assumed that $\Psi $ is strongly continuous on all of \mA
  , i.e. $\mA = C(\mA )$.  This is no longer true for most  matrix algebras, and $C(\mA )$ is an
  interesting space in its own right. 
\end{rems}
\begin{defn}
  Let $M_t, t\in \bbR^d$, be the modulation operator $M_t x (k)= \cexp [k \cdot t] x(k)$, $ k\in \bbZd$.
  Then \begin{equation*} \chi_t(A) = M_t A M_{-t},\; \chi_t(A)(k,l)=\cexp [(k-l) \cdot t]A(k,l) \quad k,l \in
    \bbZd \, ,
  \end{equation*}
  defines a group action on matrices. 
\end{defn}
The derivations $\delta_k(A)=[X_k,A]$, $k=1, \dotsc , d,$ defined in Example \ref{MAsonbzd} are
just the canonical generators for the automorphism group $\chi$.
This automorphism group is uniformly bounded on 
each of the matrix algebras $\mJ_r, \mS_r, \mC_r$, and \bop, and on every solid \MA.

The following proposition states the \BA\ properties of $C(\mA)$.
\begin{prop} \label{prop:autom-groups-cont-IC} Let \mA\ be a \BA\  and $\Psi$ a uniformly bounded automorphism
  group acting on \mA. Then $C(\mA)$ is a closed and \IC\ subalgebra of $\mA$.  If \mA\ is a $*$-algebra, so is
  $C(\mA)$.
\end{prop}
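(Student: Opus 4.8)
The plan is to verify the four asserted properties of $C(\mA)$ in turn, in each case using the uniform bound $M_\Psi=\sup_{t}\norm{\psi_t}_{\mA\to\mA}<\infty$ as the single quantitative input. First I would check that $C(\mA)$ is a unital subalgebra. That it is a linear subspace and contains $\one$ is immediate, since each $\psi_t$ is linear and fixes the identity. For products, if $a,b\in C(\mA)$ I would use multiplicativity of $\psi_t$ to split
\[
\psi_t(ab)-ab=\psi_t(a)\bigl(\psi_t(b)-b\bigr)+\bigl(\psi_t(a)-a\bigr)b,
\]
and then bound $\norm{\psi_t(a)}_\mA\le M_\Psi\norm{a}_\mA$ so that both summands tend to $0$ as $t\to0$ (the first since $b\in C(\mA)$, the second since $a\in C(\mA)$); hence $ab\in C(\mA)$.

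Next I would prove that $C(\mA)$ is closed by a standard $3\varepsilon$-argument, where uniform boundedness is again essential. Given $a_n\to a$ in $\mA$ with $a_n\in C(\mA)$, I would estimate
\[
\norm{\psi_t(a)-a}_\mA\le M_\Psi\norm{a-a_n}_\mA+\norm{\psi_t(a_n)-a_n}_\mA+\norm{a_n-a}_\mA .
\]
Choosing $n$ large makes the first and third terms small uniformly in $t$ (precisely because of $M_\Psi$), and then letting $t\to0$ annihilates the middle term since $a_n$ is continuous. Thus $\limsup_{t\to0}\norm{\psi_t(a)-a}_\mA$ is arbitrarily small, so $a\in C(\mA)$.

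The conceptual heart of the proposition is the inverse-closedness, and this is the step I expect to carry the one genuine idea. The key observation is that an automorphism commutes with inversion: if $a\in C(\mA)$ and $a^{-1}\in\mA$, then $\psi_t(a^{-1})=(\psi_t a)^{-1}$, since $\psi_t(a)\psi_t(a^{-1})=\psi_t(aa^{-1})=\psi_t(\one)=\one$. The resolvent identity then gives
\[
\psi_t(a^{-1})-a^{-1}=(\psi_t a)^{-1}-a^{-1}=-(\psi_t a)^{-1}\bigl(\psi_t a-a\bigr)a^{-1},
\]
and, crucially, uniform boundedness controls the dangerous first factor through $\norm{(\psi_t a)^{-1}}_\mA=\norm{\psi_t(a^{-1})}_\mA\le M_\Psi\norm{a^{-1}}_\mA$. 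Consequently
\[
\norm{\psi_t(a^{-1})-a^{-1}}_\mA\le M_\Psi\norm{a^{-1}}_\mA^2\,\norm{\psi_t a-a}_\mA\longrightarrow0
\]
as $t\to0$, because $a\in C(\mA)$. Hence $a^{-1}\in C(\mA)$, which is precisely inverse-closedness.

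Finally, for the $*$-statement, if \mA\ is a Banach $*$-algebra and the $\psi_t$ are $*$-automorphisms, then $\psi_t(a^*)=(\psi_t a)^*$, so $\norm{\psi_t(a^*)-a^*}_\mA=\norm{(\psi_t a-a)^*}_\mA$, which tends to $0$ since the involution is (isometric, or at least) continuous; thus $a\in C(\mA)$ forces $a^*\in C(\mA)$. I do not anticipate a serious obstacle anywhere: all four parts reduce to elementary estimates once one notes that $\psi_t$ preserves products, the identity, inverses, and the involution, and that $M_\Psi$ uniformly tames every factor of the form $\psi_t(\cdot)$. In particular no spectral-invariance machinery such as Hulanicki's Lemma (Proposition~\ref{prop:lemma-hulanicki}) is needed here, in contrast to Theorem~\ref{thm:derivationsIC}; the uniform boundedness of the group does all the work.
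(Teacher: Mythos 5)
Your proposal is correct and follows essentially the same route as the paper: the same splitting $\psi_t(ab)-ab=\psi_t(a)(\psi_t(b)-b)+(\psi_t(a)-a)b$ for the algebra property, the same $3\varepsilon$-argument for closedness, and the same resolvent identity $\psi_t(a^{-1})-a^{-1}=\psi_t(a^{-1})(a-\psi_t(a))a^{-1}$ with the bound $M_\Psi\norm{a^{-1}}_\mA^2\norm{a-\psi_t(a)}_\mA$ for inverse-closedness. Your explicit treatment of the $*$-statement and of $\psi_t(a^{-1})=(\psi_t a)^{-1}$ merely fills in details the paper leaves implicit.
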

\begin{proof}
  First we proof that $C(\mA)$ is an algebra. Let $a,b \in C(\mA)$. Then
  \begin{equation} \label{eq:CisIC} \norm{\psi_t(ab)-ab}_\mA \le \norm{\psi_t(a)}_\mA \norm{\psi_t(b)-b}_\mA +
    \norm{\psi_t(a)-a}_\mA \norm{b}_\mA.
  \end{equation}
  As $\norm{\psi_t}_{\mA \to \mA} \leq M_\Psi$ this expression tends to zero for $t\to 0$, so $ab \in C(\mA)$.
  For the completeness of $C(\mA)$ let $a_n \in C(\mA)$ for all $n$, and $a_n \to a$ in \mA. Then
  \begin{equation*}
    \norm{\psi_t (a)-a}_\mA \leq \norm{\psi_t (a-a_n)}_\mA + \norm{\psi_t (a_n)-a_n}_\mA + \norm{a_n-a}_\mA.
  \end{equation*}
  The first and the third term can be made arbitrarily small by choosing $n$ sufficiently large. Since $a_n \in
  C(\mA ) $, the second term can be made small.  Thus $a \in C(\mA)$.
  \\
  To show the \IC ness, let $a \in C(\mA)$ and assume that $a$ is invertible in \mA. Then (as in the proof of the
  quotient rule) the algebraic identity
  \begin{equation} \label{eq:Quotientrule} \psi_t(a^{-1})-a^{-1} = \psi_t(a^{-1}) (a-\psi_t(a)) a^{-1}
  \end{equation}
  yields that
  \begin{equation*}
    \norm{ \psi_t(a^{-1})-a^{-1}}_\mA \leq M_\Psi \,
    \norm{a^{-1}}_{\mA}^2\, \norm{a-\psi_t(a)}_\mA \to 0 \qquad \text{
      for } t \to 0, 
  \end{equation*}
  and thus $a^{-1} \in C(\mA)$.
\end{proof}

\subsection*{Generators and Smoothness}
Before defining the spaces $C^k(\mA)$, some technical preparations are needed, because generators commute only
under some additional conditions (similar to partial
derivatives).
\begin{prop}[~\cite{Butzer67, hille57}] \label{prop:density}
  \noindent \begin{itemize}
  \item[(i)]Let $\delta$ be the generator of a one-parameter group. Then the domain $\mD(\delta)$ is dense in
    $C(\mA)$.
  \item[(ii)]Let $\Psi$ be a $d$-parameter  automorphism group acting
    on \mA. Then $\Psi $  and the  generators commute, whenever 
    defined, i.e.,
    \begin{equation}
      \label{eq:9}
      \psi_s (\delta_t (a)) = \delta_t( \psi_s( a)) \text{ for } a \in \mD(\delta_t,\mA), s,t \in \bbRd .
    \end{equation}
  \item[(iii)] Derived spaces consist of continuous elements: $\mA^{(1)}=\bigcap_{k=1}^d \mD(\delta_k,\mA) \subseteq
    C(\mA)$.
  \item[(iv)] Let $\mD_{s,t}= \mD(\delta_s, C(\mA)) \cap \mD(\delta_t, C(\mA)) \cap \mD(\delta_s\delta_t,
    C(\mA))$. Then for $s,t \neq 0$
    \begin{equation*}
      \mD_{s,t}=\mD_{t,s}, \text{ and } \delta_s\delta_t=\delta_t\delta_s \text{ on } \mD_{s,t}.
    \end{equation*}
  \end{itemize}
\end{prop}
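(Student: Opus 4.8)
The plan is to treat the four parts in order, since each builds on the preceding ones, and to carry out all the analytic work inside $C(\mA)$, where the restriction of $\Psi$ is a uniformly bounded, strongly continuous group and the classical one-parameter semigroup machinery applies.

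For (i) I would use the standard mollification trick. Fixing the one-parameter group $\psi_s$ with generator $\delta$, for $a \in C(\mA)$ and $\phi \in C_c^\infty(\bbR)$ I form the Bochner integral $a_\phi = \int_\bbR \phi(s)\,\psi_s(a)\,\dd s$, which is well defined because $s \mapsto \psi_s(a)$ is continuous and $\norm{\psi_s(a)}_\mA \le M_\Psi \norm{a}_\mA$. The change of variables $\psi_h(a_\phi) = \int_\bbR \phi(s-h)\,\psi_s(a)\,\dd s$ shows that the difference quotient converges, so $a_\phi \in \mD(\delta)$ with $\delta(a_\phi) = -a_{\phi'}$. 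Taking $\phi = \phi_\varepsilon$ an approximate identity and estimating $\norm{a_{\phi_\varepsilon} - a}_\mA \le \int \phi_\varepsilon(s)\,\norm{\psi_s(a)-a}_\mA\,\dd s$ gives $a_{\phi_\varepsilon} \to a$, which proves density.

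For (ii) the point is simply that each $\psi_s$ is bounded and, because $\Psi$ is abelian, commutes with the one-parameter group $\set{\psi_{ht}}_{h\in\bbR}$; hence $\psi_s$ may be pulled through the limit defining $\delta_t$, which simultaneously shows that $\psi_s$ preserves $\mD(\delta_t)$ and yields the stated commutation. For (iii) I would use the group law $\psi_t = \psi_{t_1 e_1}\cdots \psi_{t_d e_d}$ to telescope $\psi_t(a) - a$ into a sum of terms $\psi_{t_1 e_1}\cdots \psi_{t_{j-1}e_{j-1}}(\psi_{t_j e_j}(a) - a)$; bounding each composite group element by $M_\Psi$ gives $\norm{\psi_t(a)-a}_\mA \le M_\Psi \sum_{j=1}^d \norm{\psi_{t_j e_j}(a)-a}_\mA$, and each summand tends to $0$ for $a \in \mD(\delta_j)$ since convergence of the difference quotient forces the numerator to vanish. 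Thus $a \in C(\mA)$.

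The real content, and the step I expect to be the main obstacle, is (iv): promoting the one-sided hypothesis $\delta_t(a) \in \mD(\delta_s)$ to the symmetric identity $\delta_s\delta_t = \delta_t\delta_s$. The key is to exploit the closedness of the generator $\delta_t$. For $a \in \mD_{s,t}$ I set $x_h = h^{-1}(\psi_{hs}(a) - a)$; by (ii) each $x_h$ lies in $\mD(\delta_t)$ and $\delta_t(x_h) = h^{-1}(\psi_{hs}(\delta_t a) - \delta_t a)$. As $h \to 0$ we have $x_h \to \delta_s(a)$, while the right-hand side converges to $\delta_s\delta_t(a)$ precisely because $\delta_t(a) \in \mD(\delta_s)$. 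Since $\delta_t$ is closed on $C(\mA)$, it follows that $\delta_s(a) \in \mD(\delta_t)$ and $\delta_t\delta_s(a) = \delta_s\delta_t(a)$; in particular $a \in \mD_{t,s}$. Interchanging the roles of $s$ and $t$ gives the reverse inclusion, so $\mD_{s,t} = \mD_{t,s}$ with equal mixed generators. The delicate points to watch are that all difference quotients genuinely lie in $C(\mA)$, so that the closedness of the \emph{restricted} generator may be invoked, and that the convergence $\delta_t(x_h) \to \delta_s\delta_t(a)$ uses exactly the membership $\delta_t(a) \in \mD(\delta_s)$ built into the definition of $\mD_{s,t}$.
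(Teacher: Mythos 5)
The paper offers no proof of this proposition at all --- it is stated with a citation to Butzer--Berens and Hille--Phillips --- and your argument is precisely the standard semigroup-theoretic one from those sources: mollification with $a_\phi=\int\phi(s)\psi_s(a)\,\dd s$ for (i), pulling the bounded operator $\psi_s$ through the defining limit for (ii), telescoping over the coordinate subgroups for (iii), and closedness of the generator restricted to $C(\mA)$ for (iv). All four steps are correct, and you rightly identify the one delicate point, namely that the difference quotients $x_h$ and their images $\delta_t(x_h)$ stay inside $C(\mA)$, where $\Psi$ is strongly continuous and the restricted generator is genuinely closed, so the closed-operator argument in (iv) is legitimate.
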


\begin{defn} \label{def:cnspace}
  For $k\in \bbN _0$  the spaces $C^k(\mA)$ and $C^\infty(\mA)$ are defined as
  \begin{equation*}
    C^k(\mA)=\bigcap_{\abs{\alpha} \leq k} \mD(\delta^\alpha,C(\mA)) \quad
    \text{and} \quad  C^\infty(\mA)=\bigcap_{\alpha\geq 0}
    \mD(\delta^\alpha,C(\mA)) \, .
  \end{equation*}
The   norm on $C^k(\mA)$ is $ \norm{a}_{C^k(\mA)}=\sum_{\abs{\alpha}\leq k}\frac{1}{\alpha!}\norm{\delta^\alpha a}_\mA.  $ For
  $k=0$ we set $C^0(\mA)=C(\mA)$.
\end{defn}
Proposition \ref{prop:density} shows that this definition does not depend on the ordering of the standard basis.

It is a (trivial but) important fact that the smoothness spaces consist of the \cont\ elements of the derived
spaces, i.e., 
\begin{equation}
  \label{eq:12}
  C(\mA^{(1)})=C^1(\mA).
\end{equation}
Algebra properties and \IC ness of the spaces $C^k(\mA)$ are summarized in the following proposition. Note that in contrast to Theorem \ref{thm:derivationsIC} we do not need any further assumptions on \mA.
\begin{prop} \label{prop:smooth_IC} Each $C^k(\mA)$ is an \IC\ Banach subalgebra of $\mA$. $C^\infty(\mA)$ is an
  \IC\ Fr\'{e}chet subalgebra of $\mA$. 
\end{prop}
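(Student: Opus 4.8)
The plan is to reduce everything to the single-generator case and then bootstrap with the Leibniz rule. Since inverse-closedness is transitive and $C(\mA)$ is already \IC\ in $\mA$ by Proposition~\ref{prop:autom-groups-cont-IC}, it suffices to prove that each $C^k(\mA)$ is \IC\ in $\mathcal{B}:=C(\mA)$. The decisive structural point is that on $\mathcal{B}$ the group $\Psi$ is strongly continuous, so each canonical generator $\delta_j$ is a closed derivation whose domain $\mD(\delta_j,\mathcal{B})$ is \emph{dense} in $\mathcal{B}$ by Proposition~\ref{prop:density}(i); moreover $\delta_j(\one)=0$, so $\one\in\mD(\delta_j,\mathcal{B})$. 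This density is precisely what allows us to dispense with the symmetry hypothesis of Theorem~\ref{thm:derivationsIC}.

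First I would settle the single-generator base case. Because $\delta_j$ is a closed, densely defined derivation on the \BA\ $\mathcal{B}$ with $\one$ in its domain, the result of~\cite{MR1221047} for densely defined derivations on arbitrary Banach algebras applies and shows that $\mD(\delta_j,\mathcal{B})$ is a Banach subalgebra, \IC\ in $\mathcal{B}$, on which the quotient rule $\delta_j(a^{-1})=-a^{-1}\delta_j(a)a^{-1}$ holds. No involution enters here, which is exactly why no further assumption on $\mA$ is needed.

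Next I would promote this to arbitrary multi-indices by induction on $|\alpha|$, proving simultaneously that $\mD(\delta^\alpha,\mathcal{B})$ is \IC\ in $\mathcal{B}$ and that for invertible $a$ there is a higher quotient rule expressing $\delta^\alpha(a^{-1})$ as a finite sum of products $\pm\, a^{-1}\delta^{\gamma_1}(a)a^{-1}\cdots a^{-1}\delta^{\gamma_m}(a)a^{-1}$ with $\gamma_i>0$ and $\sum_i\gamma_i=\alpha$. Writing $\alpha=\beta+e_j$, the induction hypothesis gives $a^{-1}\in\mD(\delta^\beta,\mathcal{B})$ together with the formula for $\delta^\beta(a^{-1})$, and I then differentiate this lower-order expression once more with $\delta_j$. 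The key observation is that every factor is in $\mD(\delta_j,\mathcal{B})$: the factors $a^{-1}$ by the base case, and each $\delta^{\gamma_i}(a)$ because $\gamma_i\le\beta$ forces $\gamma_i+e_j\le\alpha$, so $\delta_j\delta^{\gamma_i}(a)=\delta^{\gamma_i+e_j}(a)\in\mathcal{B}$. Since $\mD(\delta_j,\mathcal{B})$ is an algebra, the single-generator Leibniz rule applies term by term and yields $\delta^\alpha(a^{-1})\in\mathcal{B}$, i.e. $a^{-1}\in\mD(\delta^\alpha,\mathcal{B})$; the commutation of generators in Proposition~\ref{prop:density}(iv) makes the result independent of the chosen $j$, and the general Leibniz rule~\eqref{leib} keeps the bookkeeping consistent.

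Finally, $C^k(\mA)=\bigcap_{|\alpha|\le k}\mD(\delta^\alpha,\mathcal{B})$ is a finite intersection of subalgebras each \IC\ in $\mathcal{B}$, and a finite or countable intersection of \IC\ subalgebras is again \IC; completeness in the norm $\sum_{|\alpha|\le k}\frac1{\alpha!}\norm{\delta^\alpha a}_\mA$ follows from closedness of the operators $\delta^\alpha$, and submultiplicativity from~\eqref{leib} with the factorial normalization. Hence $C^k(\mA)$ is an \IC\ Banach subalgebra of $\mathcal{B}$, and thus of $\mA$ by transitivity, while $C^\infty(\mA)=\bigcap_k C^k(\mA)$ is the corresponding countable intersection, an \IC\ Fr\'echet subalgebra. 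I expect the main obstacle to be the inductive bookkeeping of the higher quotient rule, namely verifying at each stage that every factor still lies in the domain of the next generator so that the single-generator Leibniz and quotient rules may be invoked term by term. A conceptually cleaner but technically heavier alternative is to iterate the theorem of~\cite{MR1221047} directly on the intermediate derived algebras $\mD(\delta_{r_{n-1}}\cdots\delta_{r_1},\mathcal{B})$; there the obstacle migrates to showing that $\delta_{r_n}$ is densely defined on each such algebra, which one obtains from the density of the common smooth vectors $C^\infty(\mA)$ via regularization of the strongly continuous group.
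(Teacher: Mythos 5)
Your proof is correct, and it diverges from the paper's at the decisive step in a way worth noting. The paper disposes of the proposition in two lines: $C^k(\mA)$ is \IC\ in $C(\mA)$ ``by Proposition~\ref{prop_comm-deriv}'', and $C(\mA)$ is \IC\ in $\mA$ by Proposition~\ref{prop:autom-groups-cont-IC}, so transitivity finishes. But Proposition~\ref{prop_comm-deriv} (and the underlying Theorem~\ref{thm:derivationsIC}, which rests on Hulanicki's Lemma and a spectral-radius computation) carries a \emph{symmetry} hypothesis that $C(\mA)$ need not satisfy for a general \BA\ $\mA$, so the paper's literal citation does not by itself substantiate its own remark that ``we do not need any further assumptions on $\mA$.'' Your argument supplies exactly the missing ingredient: on $C(\mA)$ the group is strongly continuous, hence by Proposition~\ref{prop:density}(i) each generator is a closed \emph{densely defined} derivation with $\one$ in its domain, and the Kissin--Shulman result of~\cite{MR1221047} for densely defined derivations on arbitrary Banach algebras replaces the symmetry-based Theorem~\ref{thm:derivationsIC} as the base case. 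Your subsequent induction on $|\alpha|$ via the higher quotient rule is sound (the point that $\gamma_i\le\beta$ forces $\gamma_i+e_j\le\alpha$, so every factor stays in $\mD(\delta_j,C(\mA))$, is exactly the bookkeeping that makes it work, granted the paper's standing assumption that $\delta^\alpha$ and its domain are order-independent), and it differs in flavour from the paper's induction, which instead iterates the inverse-closedness theorem on the nested algebras $\mD(\delta_{r_{j+1}},\mD(\delta_{r_j}\cdots\delta_{r_1}))$ --- the alternative you sketch at the end, where, as you say, the burden shifts to checking density of the domain at each stage (available here via regularization, Lemma~\ref{lem:smoothappr}). In short: same overall architecture (reduce to $C(\mA)$, handle one generator, induct, intersect), but your choice of key lemma is the one that actually delivers the stated generality, at the price of a more explicit combinatorial induction.
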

\begin{proof}
  By Proposition~\ref{prop_comm-deriv} $C^k(\mA )$ is inverse-closed in $C(\mA )$ and $C(\mA ) $ is
  inverse-closed in $\mA $, whence $C^k(\mA ) $ is inverse-closed in $\mA $. If $a\in C^\infty (\mA )\subseteq
  C^k(\mA )$, $k\geq 0$, is invertible in $\mA $, then $a^{-1} \in C^k(\mA )$ for all $k\geq 0$ and thus $a^{-1}
  \in C^\infty (\mA )$.
\end{proof}

We summarize the inclusion relations between the derived spaces $\mA^{(k)}$ and the spaces $C^k(\mA)$.
\begin{equation}
  \label{eq:smoothincl}
  \mA \supseteq C(\mA) \supseteq \mA^{(1)} \supseteq C^1(\mA)=C(\mA^{(1)}) \supseteq \mA^{(2)} \supseteq \cdots \supseteq C^\infty (\mA)=\mA^{(\infty)}
\end{equation}
In general, $C(\mA^{(k)})$ is not dense in $\mA^{(k)}$, but $C^\infty(\mA)$ is dense in $C(\mA)$. The inclusions follow from Proposition~\ref{prop:density}(iii) and (\ref{eq:12}).

\subsection*{Smoothness in Matrix Algebras}
\label{sec:smoothn-matr-algebr}
We now identify the derived spaces $\mA^{(k)}$ and the spaces $C^k(\mA)$ for some of the \MA s of
Section~\ref{sec:standard-examples} with respect to the automorphism group $\{\chi _t\}
$. 
\begin{prop} \hspace{1cm} 
  \begin{enumerate}
  \item [(i)]Let $r\geq0$ and $\mA$ be one of the algebras $\mJ_r,\mS_r, \bopzd$. Then $C(\mA) \neq \mA$.
  \item [(ii)]$C^k(\mC_s)=\mC_{k+s}$,  $k\in \bbN_0, s\geq 0$.
  \item [(iii)]$A \in C(\mJ_r) \Leftrightarrow \lim_{k \to \infty} \norm{\hat A (k)}_{\mJ_r}= \lim _{k \to \infty}
  \|\hat A (k)\|_{\ell^2\to \ell^2} (1+|k|)^r = 0 $.
  \end{enumerate}
\end{prop}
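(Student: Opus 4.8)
The three parts all rest on the elementary identity $(\chi_t(A)-A)(k,l)=(e^{2\pi i (k-l)\cdot t}-1)\,A(k,l)$, which says that on the $m$-th side diagonal $\chi_t(A)-A$ is simply the scalar $e^{2\pi i m\cdot t}-1$ times $\hat A(m)$. Everything then reduces to weighing these scalar factors against the diagonal norms, and the whole difficulty lies in the interplay between ``$t$ small'' and ``the diagonal index large''.

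For (i) I would exhibit one non-continuous element in each algebra. For $\mJ_r$ and $\mS_r$ I place a single entry $a_m=(1+\abs m)^{-r}$ on each side diagonal $m\in\bbZd$, choosing the positions to have pairwise distinct rows and pairwise distinct columns (easily arranged inductively, since at each step only finitely many indices are forbidden). Then every row and column carries at most one entry, so $\norm{\hat A(m)}_{\ell^2\to\ell^2}=\abs{a_m}$ and both the Jaffard and the Schur norm equal $\sup_m\abs{a_m}(1+\abs m)^r=1$; hence $A\in\mJ_r\cap\mS_r$. In either norm $\norm{\chi_t(A)-A}=\sup_m\abs{e^{2\pi i m\cdot t}-1}$, and for any small $t\neq0$ one can pick an index $m$ with $m\cdot t$ within $O(\abs t)$ of $\tfrac12$, keeping this supremum bounded below near $2$; thus $A\notin C(\mJ_r)$ and $A\notin C(\mS_r)$. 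For $\bopzd$ the single-diagonal construction collapses under the weaker operator norm, so instead I would take a Fourier multiplier: for $f\in L^\infty(\bbTd)$ let $A_f$ be the convolution operator with symbol $f$. Since $\chi_t$ translates the symbol, $\norm{\chi_t(A_f)-A_f}_{\ell^2\to\ell^2}=\norm{f(\cdot+t)-f}_{L^\infty(\bbTd)}$, which tends to $0$ exactly when $f\in C(\bbTd)$; any $f\in L^\infty(\bbTd)\setminus C(\bbTd)$ therefore yields $A_f\in\bopzd\setminus C(\bopzd)$.

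For (ii) the point is that $\chi$ is strongly continuous on all of $\mC_s$. Writing $c_m=\norm{\hat A(m)}_{\ell^2\to\ell^2}(1+\abs m)^s$, the norm $\norm{A}_{\mC_s}=\sum_m c_m$ is a convergent series and $\norm{\chi_t(A)-A}_{\mC_s}=\sum_m\abs{e^{2\pi i m\cdot t}-1}\,c_m$; since each summand tends to $0$ and is dominated by the summable $2c_m$, dominated convergence gives $\norm{\chi_t(A)-A}_{\mC_s}\to0$, i.e. $C(\mC_s)=\mC_s$. Whenever $C(\mA)=\mA$ the definition of the smoothness spaces collapses to $C^k(\mA)=\bigcap_{\abs\alpha\le k}\mD(\delta^\alpha,\mA)=\mA^{(k)}$, so $C^k(\mC_s)=(\mC_s)^{(k)}$. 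Finally $\mC_s$ is a solid matrix algebra, so Proposition~\ref{prop-solidmazd} gives $(\mC_s)^{(k)}=(\mC_s)_{v_k}$, and $(\mC_s)_{v_k}=((\mC_0)_{v_s})_{v_k}=(\mC_0)_{v_{s+k}}=\mC_{s+k}$ because $v_sv_k=v_{s+k}$; combining the two identities yields $C^k(\mC_s)=\mC_{s+k}$.

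For (iii) put $d_k=\norm{\hat A(k)}_{\ell^2\to\ell^2}(1+\abs k)^r$, so $\norm{A}_{\mJ_r}=\sup_k d_k$ and $\norm{\chi_t(A)-A}_{\mJ_r}=\sup_k\abs{e^{2\pi i k\cdot t}-1}\,d_k$ (the equality of the two displayed limits is immediate, as $\hat A(k)$ lives on a single diagonal). For ``$\Leftarrow$'', assuming $d_k\to0$ as $\abs k\to\infty$, I split the supremum: for $\abs k$ large $\abs{e^{2\pi i k\cdot t}-1}d_k\le2d_k$ is small uniformly in $t$, while the finitely many remaining terms are controlled by $\abs{e^{2\pi i k\cdot t}-1}\le2\pi\abs k\abs t\to0$. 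For the converse I would use contraposition: if $d_{k_n}\ge\varepsilon$ along a sequence with $\abs{k_n}\to\infty$, I choose a coordinate $j_n$ with $\abs{(k_n)_{j_n}}\ge\abs{k_n}/d$ and set $t_n=\tfrac1{2(k_n)_{j_n}}e_{j_n}$; then $t_n\to0$ but $k_n\cdot t_n=\tfrac12$, so $\abs{e^{2\pi i k_n\cdot t_n}-1}=2$ and $\norm{\chi_{t_n}(A)-A}_{\mJ_r}\ge2\varepsilon$, contradicting continuity. The only genuinely non-routine step is this last construction, namely tuning perturbations $t_n\to0$ that resonate ($k_n\cdot t_n\equiv\tfrac12$) with diagonals escaping to infinity; the same resonance idea drives (i), and once one sees that $\bopzd$ needs the multiplier picture rather than a single diagonal, the rest is bookkeeping together with the dominated-convergence estimate for (ii).
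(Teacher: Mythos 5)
Your argument is correct, and in parts (ii) and (iii) it is essentially the paper's proof: the same dominated-convergence/``direct calculation'' for $C(\mC_s)=\mC_s$ followed by Proposition~\ref{prop-solidmazd}, and the same resonance trick (choosing $t_n\to 0$ with $k_n\cdot t_n=\tfrac12$) for the forward implication in (iii) — the paper takes $t=k/(2\abs{k}_2^2)$ where you take $t_n$ along a coordinate axis, an immaterial difference. The only genuine divergence is in (i). The paper uses a single witness for all three algebras: the anti-diagonal matrix $\Gamma_r(k,-k)=(1+\abs{2k})^{-r}$, which is a special case of your ``one entry per side diagonal, distinct rows and columns'' construction, and whose $r=0$ instance $\Gamma_0$ is a unitary (permutation) matrix on $\ell^2$, so the same computation $\limsup_{t\to0}\norm{\chi_t(\Gamma_0)-\Gamma_0}_{\ell^2\to\ell^2}=2$ disposes of $\bopzd$ as well. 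Your claim that the single-diagonal construction ``collapses'' for the operator norm is true only because you kept the decaying entries $(1+\abs{m})^{-r}$; with unit entries it survives, and that is exactly what the paper exploits. Your alternative via Fourier multipliers — $\norm{\chi_t(A_f)-A_f}_{\ell^2\to\ell^2}=\norm{T_tf-f}_{L^\infty(\bbTd)}$ for $f\in L^\infty(\bbTd)\setminus C(\bbTd)$ — is also perfectly valid and arguably more conceptual (it identifies $C(\bopzd)$ restricted to Laurent operators with $C(\bbTd)$ inside $L^\infty(\bbTd)$), at the cost of needing a separate construction for the third algebra where the paper needs none.
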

\begin{proof}
  (i) Define the anti-diagonal matrix $\Gamma_r$ by $\Gamma_r(k,-k)=
  (1+\abs {2k})^{-r}, k\in \bbZd $ and $\Gamma _r (k,l) = 0$ for
  $l\neq -k$.  Then $\Gamma_r \in \mJ_r$ and  $\Gamma _r \in \mS_r$, and in fact $\|\Gamma _r\|_{\mJ_r} =
  \|\Gamma _r\|_{\mS_r} = 1$. Likewise, $\Gamma _0$ is unitary in $
  \bop$. The matrix 
$\chi_t(\Gamma_r)-\Gamma_r$ has the non-zero entries on the anti-diagonal
  \begin{equation*} \label{eq:41} 
(\chi_t(\Gamma_r)-\Gamma_r)(k,-k)=\abs{\cexp [(k+ k)\cdot t] -1}\,
\Gamma_r(k,-k)=2 \, \abs{\sin(2 \pi k \cdot
      t)}\, (1 +\abs{2k})^{-r}, \quad k\in \bbZd \, .
  \end{equation*}
  The norm in $\mJ_r$ and $\mS_r$ is thus
  \[
  \norm{\chi_t(\Gamma_r)-\Gamma_r}_{\mJ_r}=\norm{\chi_t(\Gamma_r)-\Gamma_r}_{\mS_r}=2 \sup_{k\in \bbZd} \abs{\sin(2 \pi k \cdot t)},
  \]
  and so $\limsup _{\abs t\to 0}
  \norm{\chi_t(\Gamma_r)-\Gamma_r}_{\mJ_r} = \limsup _{\abs t\to 0}
  \norm{\chi_t(\Gamma_r)-\Gamma_r}_{\mS_r}= 2$. Similarly, $\limsup 
  _{\abs t\to 0} \norm{\chi_t(\Gamma_0)-\Gamma_0}_{\ell^2 \to \ell^2}=2$.  So $\Gamma _r\not \in C(\mJ _r ) \cup C(\mS _r )$ and $\Gamma _0 \not \in
  C(\bop ) $.

  (ii) We first verify that $C(\mC_r)=\mC_r$ for all $r \geq 0$ by a
  direct calculation (or by applying
  Proposition~\ref{prop_DeLeeuw}). Consequently $C^k(\mC _r) = (\mC
  _r)^{(k)}$ according to Definition~\ref{def:cnspace}.  Now Proposition
  \ref{prop-solidmazd} and ~(\ref{eq:ch8}) imply that   $
  \mC_r^{(k)}=(\mC_r)_{v_k}=\mC_{r+k}$.

  (iii) 
  First let $A \in C(\mJ_r)$. Then for every $\epsilon>0$ there is a $\tau = \tau(\epsilon )$ such that
  \[
\|\chi_t(A)-A \|_{\mJ _r} = 2  \sup _{k\in \bbZd}  \abs{\sin \pi k \cdot t} \, \norm{\hat A (k)}_{\mJ_r} < \epsilon
  \]
 for all $t$ with $\abs t < \tau $. If $\abs k _2 > (2\tau )^{-1}$ and
   $t=\frac{k}{2\abs k_2^2}$, then  $ \norm{\hat A (k)}_{\mJ_r} <
   \epsilon$, and so $\lim _{k\to \infty } \|\hat 
  A(k)\|_{\mJ _r} = 0$.

  For the converse implication write
  \begin{equation*}
    \norm{\chi_t(A)-A}_{\mJ_r} \leq \max_{\abs k < N} \norm{\hat A
      (k)}_{\mJ_r} \abs{\cexp [k \cdot t] -1} + 2 \sup_{\abs k \geq N}\norm{
      \hat A (k)} _{\mJ_r}\, .
  \end{equation*}
  This expression can be made arbitrarily small by choosing $N$ sufficiently large first and then letting $t$ tend to
  zero. Consequently, $A\in C(\mJ _r)$.
\end{proof}

Without proof we mention that
a matrix $A$ is in $C(\mS_0)$ if and only if
\begin{equation}
  \lim_{N \to \infty} \sup_{k\in \bbZd}  \sum_{\abs s > N} \abs{A(k,k-s)} =0 \text{ and }
  \lim_{N \to \infty} \sup _{k\in \bbZd}  \sum_{\abs s > N} \abs{A(k-s,k)} =0.
\end{equation}
This can be shown by hand, but will follow immediately from Corollary~\ref{Cor_perWeierstrass}.

\subsection{\HZ\ Spaces and Generalized Smoothness}
\label{sec:lipsch-spac-gener}
In analogy with the H\"{o}lder-Zygmund spaces on \bbRd\ we now define the \HZ\ spaces related to the
\BA\ \mA. This concept is well known for semigroups acting on \BS s, see \cite[Ch.~3]{Butzer67},
\cite{Engel00}. 

We gather some notation. Let $\Psi$ be an  automorphism group  on
\mA. For  $t \in \bbRd$ the
finite differences of $a \in \mA$  are defined as
\[
\Delta_t a = \psi_t(a)-a, \quad \Delta^k_t a = \Delta_t \Delta^{k-1}_t a,\quad k\geq1.
\]
The $k$-th modulus of smoothness is given by
\[
\omega^{(k)}_h(a)= \sup_{\abs t \leq h}\norm{\Delta^k_t a}_\mA, \quad h>0.
\]
We set $\omega_h(a)=\omega^{(1)}_h(a)$.
For $0 < r \leq 1$ the \emph{\HZ\ seminorm} of $a\in \mA$ is
\begin{equation}
  \label{lip*a}
  \abs{a}_{\Lambda_r}= \sup_{\abs t \neq 0 } \abs{t}^{-r}{\norm{\Delta^2_t a}_\mA}.  
\end{equation}
It is easily seen to be equivalent to $ \sup_{\abs t \neq 0 }\, \abs{t}^{-r}\omega^{(2)}_{\abs t}(a)$.
\begin{defn} \label{def:lipsch-spac-gener} Given $0 \leq r < \infty$
  with $r=k+\eta$, $k\in \bbN _0$ and $ 0< \eta 
  \leq 1$, the H\"older-Zygmund space $\Lambda_r(\mA)$ consists of all $a \in \mA$ for which
  \begin{equation}
    \label{eq:Hoeldzygabst}
    \norm{a}_{\Lambda_r(\mA)}= \norm{a}_{C^k(\mA)}+ \sum _{|\alpha|= k}       \norm{\delta^\alpha (a)}_{\Lambda_{\eta}} < \infty \, .
  \end{equation}
  The subspace $\lambda_r(\mA)$ consists of all $a \in C^k(\mA)$, such that
  \begin{equation}
    \lim_{ t \to 0 } \abs{t}^{-\eta}{\norm{\Delta^2_t\delta^\alpha (a)
      }_\mA} =0   \qquad \text{ for all } \,  \alpha, \abs \alpha =k \, .
  \end{equation}

\end{defn}
\begin{rems}
  For $\mA = C(\bbRd)$ and the translation group $\Psi=\set{T_t}$ the spaces $\Lambda_r(C(\bbRd))$ coincide with
  the classical H\"older-Zygmund spaces.

  The ``small'' \HZ\ space $\lambda_r(\mA)$ can be identified with $C(\Lambda_r(\mA))$.

  There are many equivalent definitions of \HZ\ spaces on $\bbRd$.  These carry over to $\Lambda_r (\mA)$. 
We will need the following characterizations.
\end{rems}
\begin{lem} \label{lemma:weakdeff} \hspace{1cm}
\begin{enumerate}
\item[(i)] Weak definition: For $a \in C(\mA)$ and $a' \in \mA'$ (the dual of \mA) we define
  \begin{equation} \label{eq:45} G_{a',a} (t)=\inprod{a',\psi_t(a)},
  \end{equation}
where $\inprod{\,,\,}$ denotes the dual pairing of $\mA' \times \mA$.  Then for $r >0$
  \[
  \norm{a}_{\Lambda_r(\mA)} \asymp \sup_{\norm{a'}_{\mA'}\leq 1} \norm{G_{a',a}}_{\Lambda_r(\bbRd)} .
  \]
\item[(ii)] First order differences: If $0 < r <1$, then
 the expressions $\sup_{\abs t \neq 0
  }\abs{t}^{-r}{\norm{\Delta_t a }_\mA} $
and $\sup_{\abs t \neq 0}\abs{t}^{-r}\omega_{\abs t}(a)$ are
  equivalent seminorms on $\Lambda_r(\mA)$.
\item[(iii)] Higher order differences: Let $k \in \bbN$, $0 <r <k$. Then $\sup_{\abs t \neq
    0}\abs{t}^{-r}\norm{\Delta^k_t a}_\mA$ and $\sup_{\abs t \neq 0}\abs{t}^{-r}{\omega^{(k)}_{\abs t}( a)}_\mA$ are
  equivalent seminorms on $\Lambda_r(\mA)$.
\end{enumerate}
\end{lem}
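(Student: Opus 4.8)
The plan is to establish the three equivalences by transporting the corresponding classical facts about scalar-valued Hölder--Zygmund spaces $\Lambda_r(\bbRd)$ to the Banach-algebra setting via the weak characterization in part (i). The key observation is that for fixed $a \in C(\mA)$ and any functional $a' \in \mA'$, the scalar function $G_{a',a}(t) = \inprod{a',\psi_t(a)}$ satisfies $\Delta^k_s G_{a',a}(t) = \inprod{a', \psi_t(\Delta^k_s a)}$, because $\psi_{t+s} = \psi_s\psi_t$ and the group acts by bounded automorphisms. Evaluating at $t=0$ gives $\Delta^k_s G_{a',a}(0) = \inprod{a', \Delta^k_s a}$, which links finite differences of the vector-valued object to finite differences of the scalar function. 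Taking the supremum over the unit ball of $\mA'$ and using the Hahn--Banach theorem to recover the $\mA$-norm, one obtains the norm equivalence in (i); this is the device that lets every subsequent statement be reduced to a known scalar statement.

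For part (i) itself, I would first treat the base case $0 < r \leq 1$ directly from the definition~\eqref{lip*a}, using the identity above to show $\abs{a}_{\Lambda_r} = \sup_{\norm{a'}\leq 1}\abs{G_{a',a}}_{\Lambda_r(\bbRd)}$, and then combine with the uniform bound $\sup_t \norm{G_{a',a}}_\infty \leq M_\Psi \norm{a}_\mA$ to control the full norm. For $r = k+\eta > 1$ one must also match the derivative terms: here I would invoke Proposition~\ref{prop:density}(ii) (commutation of $\psi_s$ with the generators) to check that $G_{a', \delta^\alpha a}(t)$ equals the classical derivative $D^\alpha G_{a',a}(t)$ whenever $a \in C^k(\mA)$, so that the $C^k(\mA)$-norm and the Hölder seminorms of the top-order generators correspond exactly to the $C^k$ and Hölder pieces of the classical norm~\eqref{eq:holderzygmund}. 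This reduces (i) to the known equivalence of norms for $\Lambda_r(\bbRd)$.

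Parts (ii) and (iii) then follow quickly. Both assert that one may replace the second-order difference $\Delta^2_t$ in the seminorm by first-order (for $0<r<1$) or by $k$-th order (for $0<r<k$) differences without changing the space. By part (i) it suffices to prove the corresponding equivalences for scalar functions in $\Lambda_r(\bbRd)$, which are the classical Marchaud-type inequalities relating moduli of smoothness of different orders: the bound $\omega^{(k)}_h(f) \leq C\, h^r$ for one admissible $k$ forces the same for every $k' > r$, and conversely. I would cite~\cite{Butzer67} for these scalar inequalities and simply transport them through the weak characterization, noting that the passage from $\norm{\Delta^k_t a}_\mA$ to $\omega^{(k)}_{\abs t}(a)$ is immediate since the latter is a supremum of the former over $\abs s \leq \abs t$.

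The main obstacle I anticipate is the bookkeeping in part (i) for $r>1$, namely verifying that $G_{a',\delta^\alpha a} = D^\alpha G_{a',a}$ as genuine classical derivatives of the scalar function on $\bbRd$. This requires that the vector-valued limit defining $\delta^\alpha a$ in~\eqref{eq:generator} pass through the bounded functional $a'$ and through the group action in the correct order; the commutation relations of Proposition~\ref{prop:density}(ii), together with the closedness of each generator, are exactly what is needed, but one must be careful that $a \in C^k(\mA)$ guarantees the relevant generators are defined along the path $t \mapsto \psi_t(a)$ and that the order of differentiation is immaterial (Proposition~\ref{prop:density}(iv)). Once this identification is secured, the remaining arguments are routine applications of the scalar theory.
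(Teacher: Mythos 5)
Your proposal is correct and follows essentially the same route as the paper: both reduce everything to the scalar case via the identity $\Delta^k_s G_{a',a}=G_{a',\Delta^k_s a}$ together with the norm equivalence $\norm{b}_\mA \asymp \sup_{\norm{a'}_{\mA'}\leq 1}\norm{G_{a',b}}_\infty$, handle $r>1$ through $\inprod{a',\delta^\alpha(a)}=D^\alpha G_{a',a}\vert_{t=0}$, and then quote the classical equivalences of moduli of smoothness for (ii) and (iii). The technical point you flag about identifying $G_{a',\delta^\alpha a}$ with $D^\alpha G_{a',a}$ is handled in the paper exactly as you suggest, via Proposition~\ref{prop:density}.
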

\begin{proof}
    We prove (i) directly from Definition~\ref{def:lipsch-spac-gener}.
   Note first that
  \[
  \norm {a}_\mA \asymp \sup_{\norm{a'}_{\mA'}\leq 1}\norm{G_{a',a}}_\infty \, ,
  \]
  because $ \norm{G_{a',a}}_\infty \leq \|a'\|_{\mA '} \| \psi _t (a) \|_{\mA} \leq M_\Psi \|a\|_{\mA } \,
  \|a'\|_{\mA ' } $ and
$$
\norm{a}_{\mA } = \sup _{\norm{a'}_{\mA'}\leq 1} |\langle a', a\rangle | \leq \sup _{\norm{a'}_{\mA'}\leq 1, t\in
  \bbRd} |\langle a', \psi _t( a)\rangle | = \sup _{\norm{a'}_{\mA'}\leq 1} \norm{G_{a',a}}_\infty \, .
$$
We prove the equivalence of the $\Lambda_r$-seminorms for $r \leq 1$ first. Using the algebraic
identity
\[
\inprod{a',\Delta^2_s a}=\Delta^2_s \inprod{a',\psi_t (a)} \vert _{t=0}=\Delta^2_s G_{a',a} \vert_{t=0} \, ,
\]
we obtain
\begin{equation}
  \label{eq:ch5}
  \abs{a}_{\Lambda_r}=\sup_{s\neq 0} \,  \abs s ^{-r} \norm{\Delta^2_s
    a}_\mA \asymp \sup_{s \neq 0} \abs s ^{-r}\sup_{\norm{a'}\leq1}
  \norm{\Delta^2_s G_{a',a}}_\infty  = \sup_{\norm{a'}\leq1}
  \norm{G_{a',a}}_{\Lambda _r} \, .
\end{equation}
\\
For $ r > 1$ we make use of $ \inprod{a',\delta^\alpha (a)}=D^\alpha G_{a',a} \vert_{t=0}, $ and obtain 
\begin{equation}
  \label{eq:ch4}
  \norm{\delta^\alpha (a)}_\mA \asymp \sup_{\norm{a'}\leq1}
  \norm{D^\alpha G_{a',a}}_\infty \, .  
\end{equation}
Combining~\eqref{eq:ch4} and \eqref{eq:ch5}, we obtain  $\abs a_{\Lambda _r (\mA )}\asymp  \sup_{\norm{a'}\leq 1} \norm{G_{a',a}}_{\Lambda _ r (\bbRd)}$.

Assertions (ii) and (iii) follow from (i) and the  well-known  scalar case. 
\end{proof}

From the standard literature \cite[Ch. 3.1, 3.4]{Butzer67} we know that every $\Lambda _r (\mathcal{A})$,
$r>0$ is a Banach space.  Furthermore $\Lambda _ r (\mathcal{A})$ is invariant under the action of
$\Psi$ and the following continuous embedding holds for $r \leq s $.
\begin{equation} \label{eq:47} \Lambda_s (\mA) \subseteq \Lambda_r(\mA) \, .
\end{equation}
Our interest is in the algebra property and the inverse-closedness of $\Lambda _r (\mA )$.
\begin{thm}\label{thm_HZ_is_ICBA}
  Let $\mathcal{A}$ be a Banach algebra, $\Psi $ be a $d$-dimensional
 automorphism group acting on 
  $\mathcal{A}$  and $r >0$. Then $\Lambda_r (\mA)$ is a Banach
  subalgebra of $\mathcal{A}$  and $\Lambda_r 
  (\mathcal{A})$ is \IC\ in \mA.
\end{thm}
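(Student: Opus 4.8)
The plan is to work throughout with the equivalent norm on $\Lambda_r(\mA)$ supplied by higher order differences. Fix an integer $N>r$. Combining the higher-difference characterization of Lemma~\ref{lemma:weakdeff}(iii) (or the weak characterization (i) together with the scalar theory), and using that $\psi_s$ commutes with $\Delta_t$ by Proposition~\ref{prop:density}(ii) and that $\sup_{\norm{a'}_{\mA'}\le1}\norm{G_{a',x}}_\infty\asymp\norm x_\mA$, I would first record
\[
\norm{a}_{\Lambda_r(\mA)} \asymp \norm{a}_\mA + \sup_{0<\abs t}\abs t^{-r}\norm{\Delta_t^N a}_\mA .
\]
Alongside this I would collect the elementary rate estimate
\[
\norm{\Delta_t^m a}_\mA \le C\,\abs t^{\min(m,r)}\,\norm{a}_{\Lambda_r(\mA)}, \qquad 0<\abs t\le 1,\ m\in\bbN_0 ,
\]
valid with an extra logarithmic factor $(1+\abs{\log\abs t})$ only in the single borderline case $m=r\in\bbN$: for small $m$ this comes from $C^m$-smoothness ($\omega^{(m)}_{\abs t}(a)\lesssim\abs t^{\,m}\norm a_{C^m(\mA)}$), and for large $m$ from the displayed characterization together with $\omega^{(m)}_{\abs t}\lesssim\omega^{(N)}_{\abs t}$.

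For the algebra property I would use the finite-difference Leibniz rule
\[
\Delta_t^N(uv)=\sum_{j=0}^N\binom Nj\,\psi_{jt}\!\bigl(\Delta_t^{N-j}u\bigr)\bigl(\Delta_t^{j}v\bigr),
\]
which follows by induction from $\Delta_t(uv)=(\Delta_t u)v+\psi_t(u)(\Delta_t v)$ and the commutation $\Delta_t\psi_{jt}=\psi_{jt}\Delta_t$. Taking norms, using $\norm{\psi_{jt}}_{\mA\to\mA}\le M_\Psi$, and inserting the rate estimate bounds the $j$-th summand (for $\abs t\le1$) by $C\abs t^{\,\min(N-j,r)+\min(j,r)}\norm a_{\Lambda_r(\mA)}\norm b_{\Lambda_r(\mA)}$. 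The elementary inequality $\min(N-j,r)+\min(j,r)\ge r$ — true because either one of the two orders already reaches $r$, or both are smaller and their sum is $N>r$ — then yields $\abs t^{-r}\norm{\Delta_t^N(ab)}_\mA\le C\norm a_{\Lambda_r(\mA)}\norm b_{\Lambda_r(\mA)}$ for $\abs t\le1$; for $\abs t>1$ the factor $\abs t^{-r}$ together with $\norm{\Delta_t^N(ab)}_\mA\le C\norm a_\mA\norm b_\mA$ is harmless. Hence $\Lambda_r(\mA)$ is closed under multiplication with a submultiplicative equivalent norm, so it is a Banach subalgebra of $\mA$.

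For inverse-closedness, let $a\in\Lambda_r(\mA)$ with $a^{-1}\in\mA$. Since $\Lambda_r(\mA)\subseteq C(\mA)$ and $C(\mA)$ is \IC\ in $\mA$ by Proposition~\ref{prop:autom-groups-cont-IC}, we already have $a^{-1}\in C(\mA)$, so $\norm{a^{-1}}_\mA<\infty$ and all shifts satisfy $\norm{\psi_s(a^{-1})}_\mA\le M_\Psi\norm{a^{-1}}_\mA$. The engine is the difference-quotient identity obtained from \eqref{eq:Quotientrule},
\[
\Delta_t(a^{-1})=-\,\psi_t(a^{-1})\,(\Delta_t a)\,a^{-1} .
\]
I would iterate this identity together with the finite-difference Leibniz rule to expand $\Delta_t^N(a^{-1})$ into a finite sum of terms, each a product of boundedly many shifted factors $\psi_{jt}(a^{-1})$ and finite differences $\Delta_t^{m_1}a,\dots,\Delta_t^{m_p}a$ with $m_i\ge1$ and $\sum_i m_i=N$. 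Bounding the $a^{-1}$-factors by $M_\Psi\norm{a^{-1}}_\mA$ and each $\Delta_t^{m_i}a$ by the rate estimate, every such term is $\le C\abs t^{\,\sum_i\min(m_i,r)}$ for $\abs t\le1$, and the same inequality in the form $\sum_i\min(m_i,r)\ge\min\bigl(\sum_i m_i,\,r\bigr)=r$ gives $\abs t^{-r}\norm{\Delta_t^N(a^{-1})}_\mA\le C$; the range $\abs t>1$ is again trivial. Therefore $a^{-1}$ obeys the equivalent norm above and lies in $\Lambda_r(\mA)$.

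The step I expect to be the main obstacle is the controlled expansion of $\Delta_t^N(a^{-1})$ and its bookkeeping: one must check that the recursion generated by the difference-quotient identity produces only terms whose total $a$-difference order equals $N$, with a uniformly bounded number of $a^{-1}$-factors, so that the combinatorial inequality $\sum_i\min(m_i,r)\ge r$ applies. A secondary technical point is the endpoint $\eta=1$ (equivalently $r\in\bbN$), where the borderline rate estimate carries a logarithmic factor; this is harmless, since a factor of order exactly $r$ can occur in a product only when some other factor has order $\ge1$, i.e.\ only when the exponent inequality is strict, so the surplus power of $\abs t$ absorbs the logarithm, while the unique tight term — the single factor $\Delta_t^N$ of order $N>r$ — is log-free.
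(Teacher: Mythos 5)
Your argument is correct, but it is organized quite differently from the paper's. The paper splits the proof by the size of $r$: for $r\leq 1$ it works with the explicit second\--difference identity $\Delta^2_t(ab)=\psi_{2t}(a)\Delta^2_tb+2\psi_t(\Delta_ta)\Delta_tb+(\Delta^2_ta)b$ (and its inverse counterpart obtained from $\Delta_t(a^{-1})=-a^{-1}\Delta_t(a)\psi_t(a^{-1})$), controlling the cross term via the embedding $\Lambda_r(\mA)\subseteq\Lambda_{r/2}(\mA)$ and Lemma~\ref{lemma:weakdeff}(ii); for $r=k+\eta$ it then reduces to this case through the derivative Leibniz rule \eqref{leib}, using Proposition~\ref{prop:smooth_IC} to get $a^{-1}\in C^k(\mA)$ for free and checking only that each $\delta^\alpha(ab)$, resp.\ $\delta^\alpha(a^{-1})$, lies in $\Lambda_\eta(\mA)$. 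You instead run a single uniform argument for all $r$ with the $N$-th order difference seminorm ($N>r$), the finite\--difference Leibniz rule, and the exponent inequality $\sum_i\min(m_i,r)\geq\min(\sum_im_i,r)$; your identification and treatment of the borderline log factor at $m=r\in\bbN$ is accurate, and your flagged bookkeeping of the expansion of $\Delta_t^N(a^{-1})$ does close (each application of $\Delta_t$ raises the total $a$-difference order by exactly one, and the number of $\psi_{jt}(a^{-1})$-factors stays bounded by $2N+1$). What your route buys is uniformity in $r$ and a cleaner combinatorial structure; what it costs is a heavier reliance on the full membership strength of the higher\--difference characterization in Lemma~\ref{lemma:weakdeff}(iii) -- to conclude $a^{-1}\in\Lambda_r(\mA)$ for $r>1$ you must recover the strong derivatives $\delta^\alpha(a^{-1})$ from a bound on $\Delta_t^N(a^{-1})$ alone (a Marchaud\--type inverse result, legitimate here since $a^{-1}\in C(\mA)$ makes the orbit strongly continuous so the Butzer--Berens theory applies), whereas the paper needs only the definition, the embedding \eqref{eq:47}, and the first\--order equivalence of Lemma~\ref{lemma:weakdeff}(ii).
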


\begin{proof} 
  We first treat the case $r \leq 1$. Taking norms in the identity
  \begin{equation}\label{eq:deltsquare}
    \Delta^2_t(ab) 
    =\psi_{2t}(a)\Delta^2_t b + 2 \psi_t(\Delta_t a) \Delta_t
    b+(\Delta^2_ta) \, b \, ,
  \end{equation}

  we obtain
$$
\norm{\Delta^2_t (ab)}_\mA \leq M_\Psi (\norm{a}_\mA \norm{\Delta^2_tb}_\mA + 2 \norm{\Delta_t a}_\mA
\norm{\Delta_t b}_\mA + \norm{\Delta^2_t a}_\mA \norm{b}_\mA)\, .
  $$
  Consequently, using Lemma~\ref{lemma:weakdeff}(ii) 
  \begin{equation*}
    \abs{ab}_{\Lambda_r} = \sup _{t\neq 0} |t|^{-r }
    \norm{\Delta^2_t (ab)}_\mA  \leq C (\norm{a}_\mA
    \abs{b}_{\Lambda_r}+\abs{a}_{\Lambda_{r/2}}\abs{b}_{\Lambda_{r/2}}+
    \abs{a}_{\Lambda_r} \norm{b}_\mA) . 
  \end{equation*}
  To get rid of the $\Lambda _{r /2}$-norm, we use the embedding (\ref{eq:47}) $\abs{a}_{\Lambda_{r/2}} \leq C
  \norm{a}_{\Lambda_r}$, and we finally obtain
  \begin{equation*}
    \norm{ab}_{\Lambda_r}=\norm{ab}_\mA+\abs{ab}_{\Lambda_r} \leq C \norm{a}_{\Lambda_r}\norm{b}_{\Lambda_r},
  \end{equation*}
  which shows that $\Lambda _r (\mA )$ is a Banach algebra for $r \leq 1$.

   Next we verify the \IC ness of $\Lambda_r (\mA )$. Let $a \in \Lambda_r (\mA )$ and $a$ invertible in \mA. 
We use (\ref{eq:deltsquare}) with $b=a^{-1}$ and obtain
\begin{equation}\label{eq:5}
\Delta_t^2(a^{-1})=-\psi_{2t}(a^{-1})\bigl[2 \psi_t(\Delta_t(a))\Delta_t(a^{-1})+\Delta_t^2(a)a^{-1} \bigr] .
\end{equation}
Using
  \begin{equation}
    \label{eq:invderi}
    \Delta_t(a^{-1})=-a^{-1} \;  \Delta_t(a) \; \psi_t(a^{-1}),
  \end{equation}
we argue as above and arrive at
\[
\abs{a \inv}_{\Lambda_r} \leq C \norm{a^{-1}}^2_\mA \bigl(\abs{a}_{\Lambda_{r/2}}^2 \norm{a^{-1}}_\mA+ \abs{a}_{\Lambda_r} \bigr),
\]
which is finite, again by (\ref{eq:47}).

Now let us sketch the modifications required to treat the general case
$r=k+\eta$, $k \in \bbN$, $0<\eta \leq 1$. If $a,b\in \Lambda _r(\mA
)$, then $a,b \in C^k(\mA )$ and $\delta ^\alpha (a), \delta ^\alpha
(b) \in \Lambda _\eta (\mA )$ for $|\alpha |=k$. Since $\Lambda _\eta
(\mA )$ is a \BA\ by the preceding step, the general Leibniz rule
\eqref{leib} implies that $\delta ^\alpha (ab)$ is in $\Lambda _\eta
(\mA )$ for $|\alpha |=k$, whence $\Lambda _r(\mA )$ is a \BA . 

To show that  $\Lambda_r(\mA)$ is \IC\  in \mA ,  we assume that $a
\in \Lambda_r(\mA)$ and $a \inv \in \mA$. From 
Proposition~\ref{prop:smooth_IC} we  know already  that $a \inv \in 
C^k(\mA)$, i.e.,  $\delta^\alpha(a \inv) \in C(\mA)$ for $\abs \alpha \leq k$. 
Now, using \eqref{leib} with $b=a\inv $, we obtain an explicit
expression for $\delta ^\alpha (a\inv ), |\alpha|=k$, namely
\begin{equation}
  \label{eq:ch79}
  \delta ^\alpha (a\inv )= - \sum _{0\neq \beta \leq \alpha }
  \binom{\alpha}{\beta} \delta ^\beta (a) \delta ^{\alpha-\beta }
  (a\inv )
  \, .
 \end{equation}
By assumption $\delta ^\beta (a) \in \Lambda _\eta (\mA )$ for $\beta
\leq \alpha $  and $\delta
^{\alpha - \beta } (a\inv ) \in C^1(\mA )\subseteq \Lambda _\eta (\mA
)$ for $\beta \neq 0$.  Consequently all terms on the right-hand side
of \eqref{eq:ch79} are in $\Lambda _\eta (\mA)$ and therefore $\delta
^\alpha (a\inv )\in \Lambda _\eta (\mA )$ for $|\alpha |=k$. We have
proved that $a\inv \in \Lambda _r (\mA )$ and thus $\Lambda _r(\mA )
$ is \IC\ in $\mA $. 
\end{proof}

What does Theorem~\ref{thm_HZ_is_ICBA} say about concrete \MA s? In line with our general philosophy
we show next how the abstract smoothness is related to the
off-diagonal decay of  matrices. 
\begin{prop}\label{prop:solid_MA_HZ}
  Let \mA\ be a solid \MA\ over \bbZd\ and $r>0$. Then
  $\Lambda_r(\mA)$ is solid, and $\mA_{v_r} \subseteq 
  \Lambda_r(\mA)$.
\end{prop}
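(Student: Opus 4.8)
The plan is to reduce everything to the fact that, for the group $\chi$, each operator occurring in the definition of $\Lambda_r(\mA)$ acts as an entrywise (Schur) multiplier depending only on the side-diagonal index $m=k-l$. Indeed $(\Delta_t A)(k,l)=(\cexp[(k-l)\cdot t]-1)A(k,l)$ and $\delta^\alpha(A)(k,l)=(2\pi i)^{|\alpha|}(k-l)^\alpha A(k,l)$, so by iteration $(\Delta_t^2 A)(k,l)=(\cexp[(k-l)\cdot t]-1)^2A(k,l)$ and each of $\Delta_t^2 A$, $\delta^\alpha A$, $\Delta_t^2\delta^\alpha A$ arises from $A$ by multiplying the entries by a factor that depends only on $k-l$. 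Because $\mA$ is solid, this multiplier structure is exactly what lets me transfer estimates for $A$ to estimates for these derived matrices.

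\emph{Solidity.} Suppose $B\in\Lambda_r(\mA)$ and $|A(k,l)|\le|B(k,l)|$ for all $k,l$. Multiplying both sides by any of the factors above preserves the domination, so $|(\Delta_t^2 A)(k,l)|\le|(\Delta_t^2 B)(k,l)|$, $|\delta^\alpha A(k,l)|\le|\delta^\alpha B(k,l)|$, and likewise for $\Delta_t^2\delta^\alpha A$. Solidity of $\mA$ then makes every constituent (semi)norm in $\norm{\cdot}_{C^k(\mA)}$ and in the seminorms $\abs{\delta^\alpha(\cdot)}_{\Lambda_\eta}$ monotone, so $\norm{A}_{\Lambda_r(\mA)}\le\norm{B}_{\Lambda_r(\mA)}$. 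The only delicate point is that $A$ should inherit the membership $A\in C^k(\mA)$ from $B$; I would dispatch this with the same continuity estimate used for the embedding below, applied to $A$ via the pointwise domination by $B$.

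\emph{Embedding, case $0<r\le1$.} Write $\tilde A(k,l)=(1+|k-l|)^rA(k,l)$, so $\norm{A}_{\mA_{v_r}}=\norm{\tilde A}_\mA$. The crux is the scalar multiplier estimate
\[
\bigl|\cexp[m\cdot t]-1\bigr|^2\,(1+\abs m)^{-r}\le C\,\abs t^{\,r},\qquad m\in\bbZd,\ t\neq0,
\]
proved by splitting into $\abs m\abs t\ge1$ (use $|\cexp[m\cdot t]-1|\le2$ and $(1+\abs m)^{-1}\le\abs t$) and $\abs m\abs t<1$ (use $|\cexp[m\cdot t]-1|\le2\pi\abs m\abs t$ and $\abs t^{\,2-r}\le\abs m^{-(2-r)}$). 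Since $\abs{A(k,l)}=(1+\abs{k-l})^{-r}\abs{\tilde A(k,l)}$, this gives $|(\Delta_t^2 A)(k,l)|\le C\abs t^{\,r}\abs{\tilde A(k,l)}$ entrywise, whence $\norm{\Delta_t^2 A}_\mA\le C\abs t^{\,r}\norm{A}_{\mA_{v_r}}$ by solidity and therefore $\abs A_{\Lambda_r}\le C\norm{A}_{\mA_{v_r}}$. The analogous first-order bound $|\cexp[m\cdot t]-1|(1+\abs m)^{-r}\le C\abs t^{\,r}$ yields $\norm{\chi_t(A)-A}_\mA\to0$, i.e. $A\in C(\mA)$, so $A\in\Lambda_r(\mA)$ with continuous inclusion.

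\emph{Embedding, case $r=k+\eta>1$, and the main obstacle.} Since $v_k\le v_r$, solidity gives $\mA_{v_r}\subseteq\mA_{v_k}=\mA^{(k)}$ by Proposition~\ref{prop-solidmazd}, so $\delta^\alpha A$ exists in $\mA$ for $\abs\alpha\le k$ and equals the multiplier matrix. The pointwise bound $(1+\abs m)^{\eta}\,\abs{(2\pi m)^\alpha}\,(1+\abs m)^{-r}\le C$ (valid because $\abs\alpha=k$ and $\eta+k=r$) shows $\delta^\alpha A\in\mA_{v_\eta}$, hence $\delta^\alpha A\in\Lambda_\eta(\mA)$ by the case already proved; combined with $\norm{A}_{C^k(\mA)}\le C\norm{A}_{\mA_{v_r}}$ and summation over $\abs\alpha=k$ this gives $\norm{A}_{\Lambda_r(\mA)}\le C\norm{A}_{\mA_{v_r}}$. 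I expect the genuine difficulty to be placing $A$ in $C^k(\mA)$, that is, verifying that the formal entrywise-multiplier derivatives $\delta^\alpha A$ really are strong limits of the difference quotients $h^{-1}(\chi_{he_j}A-A)$ and lie in $C(\mA)$. This is where the strict inequality $r>k$ is used: at the stage where $\delta_j$ is applied to some $\delta^\beta A$ with $\abs\beta\le k-1$, one needs
\[
\sup_m\Bigl|\tfrac1h\bigl(\cexp[m_j h]-1\bigr)-2\pi i\,m_j\Bigr|\,(1+\abs m)^{-(r-\abs\beta)}\longrightarrow 0\quad(h\to0),
\]
which holds because the remaining order of smoothness $r-\abs\beta\ge\eta+1>1$ dominates the quadratic remainder of the exponential.
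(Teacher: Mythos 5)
Your proof is correct and follows essentially the same route as the paper: exploit that $\Delta_t^2$ and $\delta^\alpha$ act as side-diagonal multipliers, use solidity to transfer the elementary scalar bound $\abs{e^{2\pi i m\cdot t}-1}^2 \leq C\,(\abs{m}\abs{t})^{r}$ (the paper's $\sin^2$ estimate), and reduce the case $r>1$ to applying this to $\delta^\alpha(A)$ with $\abs{\alpha}=k$. The only difference is that you spell out the verification that $A$ actually lies in $C(\mA)$ resp.\ $C^k(\mA)$ (convergence of the difference quotients, using $r-\abs{\beta}>1$), a point the paper leaves to the reader; your treatment of it is sound.
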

\begin{proof}
  Recall that the automorphism group is given by $\chi _t (A) = M_t A M_{-t}$ and $(\chi _t(A))(k,l) = e^{2\pi i
    (k-l)\cdot t} A(k,l)$.  For $0<r\leq 1$  the seminorm $\abs{A} _{\Lambda_r (\mathcal{A})}$ is the \mA-norm of the matrix
  with entries
  \begin{equation}\label{eq:48}
    \abs{t}^{-r} \, \abs{\chi_{2t}(A) -2\chi_t(A)+A}(k,l)=\abs{A(k,l)}
    \frac{\abs{\sin^2{\pi (k-l)\cdot t}}}{\abs t^r}.
  \end{equation}
  If $A\in \mA $ and $\abs{B(k,l)} \leq \abs{A(k,l})$, $k,l\in \bbZd$, then the solidity of $\mA $ implies not
  only that $\norm{ B}_{\mA} \leq \norm{A}_{\mA} $, but by \eqref{eq:48} also that
$$|B|_{\Lambda _r (\mA )} \leq |A|_{\Lambda _r (\mA )} \, ,$$
and thus $\Lambda _r (\mA ) $ is solid.

For $t \neq 0$ we obtain
\[
\abs{A(k,l)} \frac{\sin^2 (\pi (k-l)\cdot t )}{\abs t^r}= \abs{A(k,l)} \, \frac{\sin^2 (\pi (k-l)\cdot t )}
{({\abs{ \pi (k-l)}\abs t)}^r} \pi ^r \abs{k-l}^r \leq \pi ^r \abs{A(k,l)} \abs{k-l}^r \, .
\]
Applying the $\mA $-norm to both sides of this inequality, we see that $|A|_{\Lambda _r (\mA)} \leq \pi ^r
\|A\|_{\mA _{v_r}} \, $ and thus  $\mA_{v_r} \subseteq 
  \Lambda_r(\mA)$.

If $0< k < r \leq k+1$ for $k\in \bbN $, we apply the same argument to
all $\delta^\alpha(A)$, $\abs \alpha =k$. Details are left to the reader.
\end{proof}
It is possible but non-trivial (see~\cite{Klo09}) to show that for  a
solid \MA\  $\mA $
\[
\Lambda_r(\mA) \subseteq \mA_{v_s} \quad \text{for all } s < r.
\]

For the Jaffard class we obtain a complete characterization of the \HZ\ spaces.
\begin{prop}\label{prop:Jaffard-char}
  Let $r, s>0$. Then
  \begin{equation}
    \label{eq:jaffHZ}
    \Lambda_r(\mJ_s)=\mJ_{s+r}.
  \end{equation}  
\end{prop}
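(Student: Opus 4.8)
The plan is to prove the two inclusions $\mJ_{s+r}\subseteq\Lambda_r(\mJ_s)$ and $\Lambda_r(\mJ_s)\subseteq\mJ_{s+r}$ separately, keeping track of norm constants so that the identification is read as an equality of Banach spaces. The first inclusion comes essentially for free: the Jaffard norm depends only on the absolute values of the entries, so $\mJ_s$ is solid, and by \eqref{eq:ch8} one has $\mJ_{s+r}=(\mJ_s)_{v_r}$. Proposition~\ref{prop:solid_MA_HZ} then yields $\mJ_{s+r}=(\mJ_s)_{v_r}\subseteq\Lambda_r(\mJ_s)$ together with continuity of the embedding. Hence the whole content sits in the reverse inclusion.

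For the reverse inclusion I would first treat the base case $0<r\le 1$ by a direct probing argument that exploits the diagonal structure of the action $\chi_t$. Writing $m=k-l$ and $a_m=(1+\abs m)^s\sup_{k-l=m}\abs{A(k,l)}=\norm{\hat A(m)}_{\ell^2\to\ell^2}(1+\abs m)^s$, one has $\norm A_{\mJ_s}=\sup_m a_m$ and $\norm A_{\mJ_{s+r}}=\sup_m(1+\abs m)^r a_m$. Since $\chi_t$ multiplies the entry on the $m$-th side diagonal by $e^{2\pi i m\cdot t}$, the second difference obeys $\abs{\Delta_t^2 A}(k,l)=4\sin^2(\pi(k-l)\cdot t)\abs{A(k,l)}$, so restricting the Jaffard supremum to a single diagonal $m$ gives $\norm{\Delta_t^2 A}_{\mJ_s}\ge 4\sin^2(\pi m\cdot t)\,a_m$. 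Via the seminorm \eqref{lip*a} this reads $\abs t^{-r}\,4\sin^2(\pi m\cdot t)\,a_m\le\abs A_{\Lambda_r}$ for every $t\ne0$. For fixed $m\ne0$ I would then choose $t=\tfrac14\, m/\abs{m}_2^2$, so that $m\cdot t=\tfrac14$ and $\sin^2(\pi m\cdot t)=\tfrac12$, while $\abs t\asymp\abs m^{-1}$ by equivalence of the $1$- and $2$-norms on $\bbRd$. This forces $\abs t^{-r}\asymp\abs m^r$ and hence $\abs m^r a_m\lesssim\abs A_{\Lambda_r}$; together with $a_0\le\norm A_{\mJ_s}$ on the main diagonal, this gives $\norm A_{\mJ_{s+r}}\lesssim\norm A_{\Lambda_r(\mJ_s)}$, that is, $\Lambda_r(\mJ_s)\subseteq\mJ_{s+r}$ for $r\le1$.

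For general $r=k+\eta$ with $k\ge1$ and $0<\eta\le1$ I would reduce to the base case through the generators. If $A\in\Lambda_r(\mJ_s)$, then $\delta^\alpha A\in\Lambda_\eta(\mJ_s)$ for all $\abs\alpha=k$, where $\delta^\alpha A(k,l)=(2\pi i)^k(k-l)^\alpha A(k,l)$. By the base case $\delta^\alpha A\in\mJ_{s+\eta}$, so $\abs{(k-l)^\alpha A(k,l)}(1+\abs{k-l})^{s+\eta}$ is bounded uniformly in $k,l$ for each such $\alpha$. Summing these estimates against the multinomial weights and using the identity $\sum_{\abs\alpha=k}\binom{k}{\alpha}\abs{m^\alpha}=\abs m^k$ (the $1$-norm raised to the $k$-th power) collapses the family into the single bound $\abs{m}^k(1+\abs m)^{s+\eta}\abs{A(k,l)}\lesssim 1$. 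Since $\abs m^k(1+\abs m)^{s+\eta}\asymp(1+\abs m)^{s+r}$ off the diagonal, and the diagonal is controlled by $A\in C^k(\mJ_s)\subseteq\mJ_s$, this yields $A\in\mJ_{s+r}$.

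The main obstacle is the probing step in the base case: one must convert the family of inequalities indexed by $t$ into pointwise decay of $a_m$ by selecting, for each off-diagonal $m$, a single direction $t(m)$ that simultaneously keeps $\sin^2(\pi m\cdot t)$ bounded below and $\abs t$ of order $\abs m^{-1}$. Everything else—the solidity argument for the easy inclusion, the multinomial collapse for large $r$, and the bookkeeping of constants needed to upgrade the set equality to an equality of Banach spaces—is routine once this choice of $t(m)$ is in place.
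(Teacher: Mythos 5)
Your proposal is correct and follows essentially the same route as the paper: the easy inclusion via solidity and Proposition~\ref{prop:solid_MA_HZ}, the base case $0<r\le 1$ by probing the second-difference seminorm on a single side diagonal with $t$ chosen proportional to $m/\abs{m}_2^2$ (the paper takes $m\cdot t=\tfrac12$ rather than $\tfrac14$, which changes nothing), and the reduction of $r=k+\eta$ to the base case through the generators. Your multinomial collapse in the last step is just an unfolding of the identification $(\mJ_{s+\eta})^{(k)}=\mJ_{s+\eta+k}$ that the paper cites from Proposition~\ref{prop-solidmazd}, so the two arguments coincide in substance.
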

\begin{proof}
  By ~\eqref{eq:ch8} and
  Proposition~\ref{prop:solid_MA_HZ},  $\mJ_{s+r} = (\mJ _{s})_{v_r }
  \subseteq \Lambda_r(\mJ_s)$. 

For the converse assume first that $0<r\leq 1$ and  use (\ref{eq:48}) to obtain
  \[
  \norm{A}_{\Lambda_r(\mJ_s)}=\sup_{t \neq 0} \sup_{k\in \bbZd}
  \norm{\hat A (k)}_{\ell^2 \to \ell^2 } (1+\abs k)^s \, \frac{\sin^2 
    (\pi k\cdot t)} {{\abs t}^r} \, .
  \]
 So $A \in \Lambda_r(\mJ_s)$ implies
  \[
  \norm{\hat A (k)}_{\mJ_0} (1+\abs k)^s {\abs{\sin^2{(\pi k\cdot t)}}} \leq C \abs{t}^r
  \]
  for all $k \in \bbZd$ and $ t \in \bbRd, t\neq 0$. If $t=\frac{k}{2 \abs k_2^2}$ we conclude that $\norm{\hat A
    (k)}_{\ell^2 \to \ell^2 } \leq C \abs{k}^{-r-s }_2$, that is $A \in \mJ_{s+r}$.

If $A\in \Lambda _{r}(\mJ _s)$ for $r=k+\eta >1$, $k \in \bbN$, $0<
\eta \leq 1$, then by definition $\delta ^\alpha (A) \in \Lambda
_\eta (\mJ _s) = \mJ _{s+\eta }$ for each $\alpha$ with $\abs \alpha
=k$. This means that $A$ belongs to the derived algebra $(\mJ _{s+\eta
})^{(k)}$. Since $(\mJ _{s+\eta
})^{(k)} = \mJ _{s+\eta +k}$ by  Proposition~\ref{prop-solidmazd}, we  obtain that $A \in
\mJ_{s+\eta+k}$. We have proved that $\Lambda _{r}(\mJ _s)
\subseteq \mJ _{r+s}$ for all parameters $r,s>0$.  
\end{proof}

A more elementary relation between H\"older-Zygmund class and \odd\ is valid in all \MA s.
\begin{prop}
  Let \mA\ be a \MA. If $A \in \Lambda_r(\mA)$, then $\norm{\hat{A}(k)}_\mA =\bigo(\abs{k}^{-r})$.
\end{prop}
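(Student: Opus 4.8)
The plan is to exploit that the automorphism group $\chi$ diagonalizes the side-diagonal decomposition. From $\chi_t(A)(k,l)=e^{2\pi i(k-l)\cdot t}A(k,l)$ one reads off at once that every side diagonal is an eigenvector, $\chi_t(\hat A(m))=e^{2\pi i m\cdot t}\hat A(m)$. Hence the projection $P_m\colon A\mapsto\hat A(m)$ onto the $m$-th diagonal intertwines $\chi$ with multiplication by the character $t\mapsto e^{2\pi i m\cdot t}$ and commutes with every $\chi_s$. The crucial observation is that $\hat A(m)$ is simply the $m$-th Fourier coefficient of the ($1$-periodic) orbit $t\mapsto\chi_t(A)$, so the statement reduces to scalar Fourier analysis on the torus. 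The quantitative input I would record first is the boundedness of $P_m$: for $B\in C(\mA)$ the orbit $t\mapsto\chi_t(B)$ is continuous and $1$-periodic in each coordinate, so $\int_{[0,1]^d}e^{-2\pi i m\cdot t}\chi_t(B)\,dt$ exists as a Bochner integral in $\mA$; evaluating its matrix entries (coordinate functionals are continuous on a matrix space) identifies it with $\hat B(m)$, and $\norm{\chi_t}_{\mA\to\mA}\le M_\Psi$ gives $\norm{\hat B(m)}_\mA\le M_\Psi\norm{B}_\mA$. Since $\Lambda_r(\mA)\subseteq C(\mA)$ and $C(\mA)$ is $\chi$-invariant, this applies both to $A$ and to each $\Delta^2_t A$.

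For the base case $0<r\le1$ I would apply $P_m$ to the second difference. As $P_m$ commutes with the $\chi_s$, we get $P_m(\Delta^2_t A)=\Delta^2_t\hat A(m)=(e^{2\pi i m\cdot t}-1)^2\hat A(m)$, whence
\[
4\sin^2(\pi m\cdot t)\,\norm{\hat A(m)}_\mA=\norm{P_m(\Delta^2_t A)}_\mA\le M_\Psi\norm{\Delta^2_t A}_\mA\le M_\Psi\,\abs{t}^{r}\,\abs{A}_{\Lambda_r}.
\]
Choosing the extremal point $t=m/(2\abs{m}_2^2)$ makes $m\cdot t=\tfrac12$, so $\sin^2(\pi m\cdot t)=1$, while $\abs{t}\le\tfrac{d}{2}\abs{m}^{-1}$; this yields $\norm{\hat A(m)}_\mA\le C\abs{m}^{-r}\abs{A}_{\Lambda_r}=\bigo(\abs{m}^{-r})$. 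This is exactly the choice of $t$ already used in the Jaffard characterization (Proposition~\ref{prop:Jaffard-char}).

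For $r=k+\eta>1$ with $k\in\bbN$ and $0<\eta\le1$ I would reduce to the base case through the generators. By Definition~\ref{def:lipsch-spac-gener} we have $\delta^{k e_j}(A)\in\Lambda_\eta(\mA)$ for each $j$, so the base case gives $\norm{\widehat{\delta^{k e_j}A}(m)}_\mA=\bigo(\abs{m}^{-\eta})$. Since $\widehat{\delta^{k e_j}A}(m)=(2\pi i m_j)^k\hat A(m)$, picking for each $m$ the coordinate $j$ with $\abs{m_j}=\max_i\abs{m_i}\ge\abs{m}/d$ turns this into $\norm{\hat A(m)}_\mA=\bigo\bigl(\abs{m}^{-\eta}\abs{m_j}^{-k}\bigr)=\bigo(\abs{m}^{-r})$, uniformly in $m$ because only the finitely many indices $j=1,\dots,d$ occur.

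The main obstacle is the single genuinely non-formal step: the boundedness $\norm{\hat B(m)}_\mA\le M_\Psi\norm{B}_\mA$ of the diagonal projection for an \emph{arbitrary}, not necessarily solid, matrix algebra. This rests on continuity of the orbit map together with continuity of the coordinate functionals, which is what permits identifying the Bochner average with the actual side diagonal; for solid algebras it would be immediate, but here it must be argued from the torus action alone. Everything else is eigenvector bookkeeping and the standard extremal choice of $t$.
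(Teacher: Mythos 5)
Your proof is correct and follows essentially the same route as the paper: the paper identifies $\hat A(m)$ with the $m$-th Fourier coefficient of the orbit $t\mapsto\chi_t(A)$ and then invokes the standard scalar argument for Fourier coefficients of functions in $\Lambda_r(\bbT^d)$, which is exactly the second-difference computation with the extremal choice $t=m/(2\abs{m}_2^2)$ that you carry out explicitly. Your reduction of the case $r>1$ via the generators and your justification of the bound $\norm{\hat B(m)}_\mA\le M_\Psi\norm{B}_\mA$ merely fill in details that the paper delegates to the reference.
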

\begin{proof}
We remark first that the  $k$-th side diagonal of $A\in C(\mA )$  is
exactly  the $k$-th   ``Fourier coefficient'' of the mapping $t\to
\chi _t(A)$:  
  \begin{equation}
    \label{eq:10}
    \hat A (k) = \int_{\bbT^d} \chi_t(A) e^ {- 2 \pi i k t} \; dt\, .
  \end{equation}
This  can be seen by direct calculation or by using~\cite{Bas90}. 
Then the  standard
  argument for the decay of the Fourier coefficients of $f\in \Lambda
  _r(\bbT ^d)$  \cite[Theorem I.4.6]{Katznelson} carries over to
  $\Lambda _r(\mA  )$.
\end{proof}

\section{Approximation in Banach Algebras}
\label{sec:appr-banach-algebr}
In this section we study a completely different method for the construction of \IC\ subalgebras. We assume the
existence of a nested set of subspaces and study the corresponding
approximation spaces (see, e.g.~\cite{Butzer68,
  DeVore93,Pietsch81}). The analogy is now with the approximation of
periodic functions by trigonometric polynomials.

Let  the index set $\Lambda$  be either $\bbR^+_0$ or $\bbN_0$. An \emph{approximation scheme} on the \BA\ \mA\
is a family $(X_\sigma)_{\sigma \in \Lambda}$ of closed subspaces $X_\sigma$ that fulfill the conditions
\begin{equation}
  \label{eq:2}
  X_0=\set{0}\quad \text{ and } \,\, \,  X_\sigma \subseteq X_\tau\text{ for } \sigma \leq \tau, \text{ and}
\end{equation}
\begin{equation}
  \label{eq:6}
  X_\sigma \cdot X_\tau\subseteq X_{\sigma+\tau}, \quad \sigma,\tau\in \Lambda.
\end{equation}
If \mA\ possesses an involution, we further assume that
\begin{equation}
  \label{eq:26}
  \one \in X_1 \quad  \text{ and } \quad  X_\sigma=X^*_\sigma \quad \text{for all } \sigma \in \Lambda.
\end{equation}
The \emph{$\sigma$-th approximation error} of  $a \in \mA$ by $X_\sigma$ is
\begin{equation}
  \label{eq:apperr}
  E_\sigma(a)=\inf_{x \in X_\sigma} \norm{a-x}_\mA. 
\end{equation}
\begin{prop} \label{prop:apprelements1} Let \mA\ be a \BA\ with an approximation scheme $(X_\sigma)$. The set
  \begin{equation}
    \label{eq:37}
    \mA_0 =\set{a \in \mA: \lim_{\sigma \to \infty} E_\sigma(a)= 0}=\overline{\bigcup_{\sigma \in \Lambda} X_\sigma}^\mA    
  \end{equation}
  is a closed subalgebra of \mA. If \mA\ is symmetric, then $\mA_0$ is \IC\ in \mA.
\end{prop}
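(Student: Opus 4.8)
The plan is to prove the two claims about $\mA_0$ in sequence: first that it is a closed subalgebra, then that it is \IC\ in \mA\ under the symmetry hypothesis. The identity $\mA_0 = \overline{\bigcup_\sigma X_\sigma}^\mA$ is essentially built into the definition, since $E_\sigma(a)\to 0$ says exactly that $a$ lies in the closure of $\bigcup_\sigma X_\sigma$; I would record this equivalence first, as it makes both the algebra property and the involution structure transparent.

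For the algebra property, I would work from the right-hand description. The set $\bigcup_\sigma X_\sigma$ is an increasing union, hence a subspace, and it is closed under multiplication: if $a \in X_\sigma$ and $b\in X_\tau$, then by \eqref{eq:6} we have $ab \in X_{\sigma+\tau} \subseteq \bigcup_\sigma X_\sigma$. Taking the closure preserves the subspace and multiplication properties by continuity of the \BA\ operations (a standard limiting argument using submultiplicativity of the norm), so $\mA_0$ is a closed subalgebra. When \mA\ has an involution, condition \eqref{eq:26} gives $X_\sigma = X_\sigma^*$, so $\bigcup_\sigma X_\sigma$ is self-adjoint, and since the involution is continuous the closure $\mA_0$ is a $*$-subalgebra as well. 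The unit lies in $\mA_0$ because $\one \in X_1$ by \eqref{eq:26}.

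The \IC ness is the substantive part, and I expect it to be the main obstacle because it is exactly here that the symmetry hypothesis must be used. My plan is to invoke Hulanicki's Lemma (Proposition~\ref{prop:lemma-hulanicki}): since \mA\ is symmetric and $\mA_0$ is a closed $*$-subalgebra with common unit and involution, the final sentence of that proposition applies directly and gives that $\mA_0$ is \IC\ in \mA. In other words, once the algebra work above establishes that $\mA_0$ is a \emph{closed} $*$-subalgebra, inverse-closedness is immediate from the general principle that a closed $*$-subalgebra of a symmetric algebra is automatically \IC. The genuine content has therefore been shifted into verifying closedness and the $*$-structure, which is why I would be careful to confirm continuity of the involution and the limiting argument for $\mA_0$ being a subalgebra; the spectral-radius comparison underlying Hulanicki's Lemma then does the rest without any further estimate.
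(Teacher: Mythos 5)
Your proposal is correct and follows essentially the same route as the paper: the identity \eqref{eq:37} is immediate, the algebra property comes from \eqref{eq:6} by a closure argument, and the inverse-closedness is obtained by observing that $\mA_0$ is a closed $*$-subalgebra of the symmetric algebra \mA\ and invoking the remark after Proposition~\ref{prop:lemma-hulanicki}. Your write-up merely spells out the limiting arguments that the paper calls ``straightforward.''
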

\begin{proof}
  Identity (\ref{eq:37}) is straightforward. With (\ref{eq:6}) we obtain that $\mA_0$ is a \BA. Furthermore,
  since $\mA_0$ is a closed $*$-subalgebra of the symmetric algebra
  \mA, $\mA _0$ is \IC\ in $\mA $ (see the remark after
  Proposition~\ref{prop:lemma-hulanicki}). 
\end{proof}
By specifying a rate of decay for $E_\sigma(a)$ as $\sigma \to \infty$, we may define a class of approximation
spaces in \mA\ by the norm 
\begin{equation}
  \label{eq:appspace}
  \norm{a}_{\mE_r^p}^p= \begin{cases}
    \int_0^\infty{E_\sigma(a)^p}(\sigma+1)^{r p}\frac{
      \dd \sigma}{\sigma+1},& \text{ for }  \Lambda=\bbR^+,\\
    \sum_{k=0}^\infty{E_k(a)^p}(k+1)^{r p}
    \frac{1}{k+1},&  \text{ for } \Lambda=\bbN_0 \, ,
  \end{cases}
\end{equation}
for $1\leq p < \infty $ with the  obvious change for $p=\infty$. 
The elementary properties of $\mE_r^p(\mA) $ were already obtained in \cite{DeVore93,Pietsch81}, and in
\cite{Almira01,Almira06}.

\begin{prop}[~\cite{Almira01,Almira06}] \label{appspBA} Let \mA\ be a \BA\ an approximation scheme
  $(X_\sigma)_{\sigma \in \Lambda}$.  Then $\mE_r^p(\mA)$ is a \BA\ and dense in $\mA_0$ for every for $1\leq
  p\leq \infty$ and $r>0$.
\end{prop}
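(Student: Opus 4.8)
The plan is to reduce the algebra property to a weighted $\ell^p$ estimate on dyadic blocks, for which submultiplicativity becomes essentially a Minkowski argument. First I record the three structural properties of the error functional $\sigma \mapsto E_\sigma(a)$ that do all the work: it is non-increasing (because the $X_\sigma$ increase), for each fixed $\sigma$ it is a seminorm, i.e. subadditive and homogeneous (because each $X_\sigma$ is a subspace), and $E_0(a)=\norm{a}_\mA$ (because $X_0=\set{0}$). Throughout I work with the equivalent norm $\norm{a}_\mA + \bigl(\sum_{j\ge 0}(2^{jr}E_{2^j}(a))^p\bigr)^{1/p}$: by the monotonicity of $E_\sigma$ this dyadic expression is comparable to the integral (resp. sum) in (\ref{eq:appspace}), a standard discretization of monotone functions, and adjoining $\norm{a}_\mA$ is harmless since in the discrete case it already occurs as the $k=0$ term $E_0(a)=\norm{a}_\mA$. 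By the convention that Banach spaces with equivalent norms are identified, this change of norm costs nothing.

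The heart of the proof is the product inequality. Given $\sigma,\tau$ and near-best approximants $x\in X_\sigma$, $y\in X_\tau$ with $\norm{a-x}_\mA\le E_\sigma(a)+\epsilon$ and $\norm{b-y}_\mA\le E_\tau(b)+\epsilon$, the element $xy$ lies in $X_{\sigma+\tau}$ by (\ref{eq:6}); writing $ab-xy=(a-x)b+x(b-y)$ and using $\norm{x}_\mA\le\norm{a}_\mA+E_\sigma(a)+\epsilon$, I obtain after $\epsilon\to 0$
\[
E_{\sigma+\tau}(ab)\le E_\sigma(a)\norm{b}_\mA+2\norm{a}_\mA E_\tau(b),
\]
where I used $E_\sigma(a)\le\norm{a}_\mA$. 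Specializing to $\sigma=\tau=2^j$ gives $E_{2^{j+1}}(ab)\le E_{2^j}(a)\norm{b}_\mA+2\norm{a}_\mA E_{2^j}(b)$.

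It remains to sum. Multiplying by $2^{(j+1)r}$, taking the $\ell^p$-norm in $j$, and applying the triangle inequality in $\ell^p$ (here $p\ge1$ is essential) to separate the two terms yields
\[
\Bigl(\sum_{j\ge0}2^{(j+1)rp}E_{2^{j+1}}(ab)^p\Bigr)^{1/p}\le 2^r\bigl(\norm{b}_\mA\,\|a\|_\ast+2\norm{a}_\mA\,\|b\|_\ast\bigr),
\]
with $\|a\|_\ast=\bigl(\sum_{j\ge0}(2^{jr}E_{2^j}(a))^p\bigr)^{1/p}$. The left-hand side is exactly the dyadic sum for $ab$ with the single missing term $E_{2^0}(ab)=E_1(ab)\le\norm{ab}_\mA\le\norm{a}_\mA\norm{b}_\mA$, which I restore by hand via $(x^p+y^p)^{1/p}\le x+y$. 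Since $\norm{a}_\mA$ and $\|a\|_\ast$ are both dominated by $\norm{a}_{\mE_r^p}$ (up to the fixed equivalence constant), and likewise for $b$, these estimates combine to $\norm{ab}_{\mE_r^p}\le C\,\norm{a}_{\mE_r^p}\norm{b}_{\mE_r^p}$; together with the completeness recorded among the elementary properties before the statement, this makes $\mE_r^p(\mA)$ a Banach algebra. The main obstacle is purely bookkeeping: the multiplicativity (\ref{eq:6}) forces the jump $\sigma\mapsto 2\sigma$ in the index, and one must check that the geometric weights $2^{jr}$ absorb this shift (costing only the constant $2^r$) while the isolated low-index term stays controlled by the ambient algebra norm.

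For the density statement I argue entirely in the $\mA$-norm. On one hand $\mE_r^p(\mA)\subseteq\mA_0$: convergence of the defining integral or sum together with the monotonicity of $E_\sigma$ forces $E_\sigma(a)\to0$, since if $E_\sigma(a)\to L>0$ the non-summable weight $(\sigma+1)^{rp-1}$ would make the norm infinite, and so $a\in\mA_0$ by (\ref{eq:37}). On the other hand every $x\in X_\sigma$ lies in $\mE_r^p(\mA)$, because $E_\tau(x)=0$ for $\tau\ge\sigma$ and hence the defining integral or sum is finite. Thus $\bigcup_\sigma X_\sigma\subseteq\mE_r^p(\mA)\subseteq\mA_0$, and taking $\mA$-closures in (\ref{eq:37}) gives $\mA_0=\overline{\bigcup_\sigma X_\sigma}^{\mA}\subseteq\overline{\mE_r^p(\mA)}^{\mA}\subseteq\mA_0$, so $\mE_r^p(\mA)$ is dense in $\mA_0$.
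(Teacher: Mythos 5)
Your proof is correct and follows essentially the same route as the paper: the key step in both is the product error estimate $E_{2\sigma}(ab)\lesssim \norm{a}_\mA E_\sigma(b)+\norm{b}_\mA E_\sigma(a)$ obtained from near-best approximants and the splitting $ab-xy=(a-x)b+x(b-y)$, followed by absorbing the index doubling into the polynomial weight (the paper does this via $(1+2n)\asymp(1+n)$ directly in the sum, you via a standard dyadic discretization — a purely cosmetic difference). Your density argument is a correct and slightly more explicit version of what the paper dismisses as immediate from the definitions.
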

\begin{proof}
  We give the proof only for the index set $\Lambda=\bbN_0$. 
  Choose $a_n, b_n \in X_n$ such that $\norm{a-a_n}\leq 2 E_n(a)$ and $\norm{b-b_n} \leq 2E_n(b)\leq 2\|b\|_{\mA } $. Then
  $\norm{b_n } \leq \|b\|+ \|b_n -b\| \leq 3\|b\|$ and
  \begin{equation}\label{eq:frue}
    \begin{split}
      E_{2n+1 }(ab) & \leq E_{2n}(ab) \leq
      \norm{ab-a_n b_n}_\mA \\
      &\leq \norm{a}_\mA\, \norm{b-b_n}_\mA +\norm{b_n}_\mA
      \norm{a-a_n}_\mA \\
      &\leq 2\norm{a}_\mA E_n(b)+ 6 \norm{b}_\mA E_n(a).
    \end{split}
  \end{equation}
  Using this estimate and the equivalence $(1+n ) \asymp (1+2n )$, we 
  obtain
  \begin{equation} \label{twonormrel} \norm{a b}_{\mE^r_p} \le C \left(\norm{a}_{\mE_r^p} \norm{b}_\mA
      +\norm{b}_{\mE_r^p} \norm{a}_\mA\right).
  \end{equation}
 The \BA-property of $\mE_r^p(\mA)$ now follows from \eqref{twonormrel}.  The claimed density follows from the
  definition of the approximation spaces.
\end{proof}
We now treat the \IC ness of approximation spaces.
\begin{prop} \label{appspIC} Let \mA\ be a symmetric \BA\ and $(X_\sigma)_{ \sigma \in \Lambda}$ an
  approximation scheme. Then $\mE_r^p(\mA)$ is \IC\ in $\mA$.
\end{prop}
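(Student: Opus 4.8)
The plan is to obtain inverse-closedness from the equality of spectral radii, exactly along the lines of Brandenburg's trick followed by Hulanicki's Lemma. The decisive observation is that the two-norm inequality \eqref{twonormrel} already established in the proof of Proposition~\ref{appspBA}, namely
\[
\norm{ab}_{\mE_r^p} \le C\bigl(\norm{a}_{\mE_r^p}\norm{b}_\mA + \norm{b}_{\mE_r^p}\norm{a}_\mA\bigr),
\]
is precisely the hypothesis \eqref{eq:25} of Brandenburg's trick, with $\mE_r^p(\mA)$ in the role of the small algebra and $\mA$ in the role of the large one. After replacing the $\mE_r^p(\mA)$-norm by the equivalent submultiplicative norm furnished by Proposition~\ref{appspBA} and the convention of Section~\ref{sec:concepts-from-theory} (which does not change the spectral radius), all hypotheses of the trick are in place.

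First I would record that $\mE_r^p(\mA)$ is a $*$-subalgebra of $\mA$ sharing the unit and the involution. The unit lies in $\mE_r^p(\mA)$ because $\one \in X_1$ by \eqref{eq:26}, so that $E_\sigma(\one)=0$ for $\sigma \ge 1$ and hence $\norm{\one}_{\mE_r^p}<\infty$. Closure under the involution follows from the symmetry axiom $X_\sigma = X_\sigma^*$: since the involution is isometric and maps each $X_\sigma$ onto itself, one has $E_\sigma(a^*)=E_\sigma(a)$ for every $\sigma$, and therefore $\norm{a^*}_{\mE_r^p}=\norm{a}_{\mE_r^p}$. Thus $\mE_r^p(\mA)$ is a $*$-subalgebra of the symmetric algebra $\mA$ with common unit and involution, which is exactly the setting required by Hulanicki's Lemma.

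Next I would run Brandenburg's trick in the usual way: setting $a=b=c^n$ in \eqref{twonormrel} gives $\norm{c^{2n}}_{\mE_r^p}\le 2C\,\norm{c^n}_{\mE_r^p}\norm{c^n}_\mA$, and taking $2n$-th roots and letting $n\to\infty$ along powers of two yields $\rho_{\mE_r^p}(c)\le\rho_\mA(c)$. Since $\mE_r^p(\mA)\subseteq\mA$ always forces the reverse inequality, we obtain $\rho_{\mE_r^p}(c)=\rho_\mA(c)$ for all $c$, in particular for every self-adjoint $c=c^*$. Because $\mA$ is symmetric, Proposition~\ref{prop:lemma-hulanicki} then converts this equality of spectral radii on self-adjoint elements into inverse-closedness of $\mE_r^p(\mA)$ in $\mA$, which is the claim.

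I do not expect a genuine obstacle here: the entire analytic content is already packed into the inequality \eqref{twonormrel}, so the proof is essentially a bookkeeping application of two results proved earlier in the paper. The only point demanding a line of care is the verification of the $*$-structure, that is the identity $E_\sigma(a^*)=E_\sigma(a)$, which rests on the isometry of the involution together with the self-adjointness $X_\sigma=X_\sigma^*$ of the approximation scheme; everything else is automatic.
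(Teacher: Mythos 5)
Your proposal is correct and follows exactly the paper's own argument: the two-norm inequality \eqref{twonormrel} from Proposition~\ref{appspBA} feeds Brandenburg's trick to give $\rho_{\mE_r^p}(a)=\rho_\mA(a)$, and Hulanicki's Lemma (Proposition~\ref{prop:lemma-hulanicki}) then yields inverse-closedness since $\mA$ is symmetric. Your extra verification of the common unit and the $*$-structure via \eqref{eq:26} is a detail the paper leaves implicit but does not change the route.
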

\begin{proof}
  The norm inequality \eqref{twonormrel} is exactly the hypothesis for the application of Brandenburg's trick
  (Section \ref{sec:method-brandenburg}),  so (\ref{twonormrel}) implies that
  \[
  \rho_{\mE_r^p}(a)=\rho_\mA(a), \quad \text{ for all }  \, a \in \mE_r^p(\mA).
  \]
  Since \mA\ is symmetric, Lemma \ref{prop:lemma-hulanicki} shows that
  $\mE_r^p(\mA ) $ is \IC\ in \mA.
\end{proof}
Proposition \ref{appspIC} is not entirely new. If $\mA _0 = \mA$, then it follows from a result of Kissin and
Shulman \cite[Thm.~5]{kissin94}. However, in most of our examples $\mA _0 \neq \mA $ and we only know that
$\mE_r^p(\mA ) $ is \IC\ in $\mA_0$, but nothing about $\mA $.  This is why the symmetry assumption is needed for the
proof of the \IC ness of $\mA_0$ in \mA. Our new  proof has the advantage of being short and concise.

We illustrate the preceding concepts with some examples.
\\ \\
(1) \emph{Approximation with trigonometric polynomials.} Let $\mA=L^\infty(\bbTd)$ and choose the approximation
scheme as
\begin{equation*}
  X_0=\set{0},\quad X_k=\spann \set{\cexp [r\cdot  t] \colon \abs{r} <k}, \: k\geq 1.
\end{equation*}
Clearly the  conditions (\ref{eq:2}-\ref{eq:26}) are  fulfilled and 
$\mA_0=C(\bbTd)$. 
 Proposition~\ref{appspIC} implies  that  $\mE^p_r(L^\infty(\bbTd))$
 is  \IC\ in $L^\infty(\bbTd)$.
\\ \\
(2) \emph{Approximation with banded matrices.} Let \mA\ be a \MA\ and 
let $\mT_N=\mT_N(\mA)$ be the set of matrices in \mA\ with bandwidth smaller than $N$,
\[
\mT_N=\set{A \in \mA \colon A=\sum_{\abs k < N}\hat A (k)}
\]
Then the sequence $(\mT_k)_{k\geq0}$ is an approximation scheme for \mA.
The closure of all banded matrices in \mA\ is the space of \emph{band-dominated matrices} in \mA
~\cite{Rabinovich98, rrs04}.
\begin{cor} \label{cor:apprmas} Let \mA\ be a symmetric \MA.
  \begin{enumerate}
  \item [(i)] Then the band-dominated matrices in \mA\ form a closed and \IC\ $*$-subalgebra of \mA.
  \item [(ii)] Each approximation space $\mE^p_r(\mA)$ is \IC\ in \mA.
  \end{enumerate}
\end{cor}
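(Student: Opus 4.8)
The plan is to deduce Corollary~\ref{cor:apprmas} directly from the general approximation-space machinery already established, specializing to the concrete approximation scheme given by the banded matrices $\mT_N(\mA)$. First I would verify that $(\mT_k)_{k\geq 0}$ genuinely is an approximation scheme in the sense of conditions \eqref{eq:2}--\eqref{eq:26}. The nesting $\mT_\sigma \subseteq \mT_\tau$ for $\sigma \leq \tau$ and $\mT_0 = \set{0}$ are immediate from the definition. The multiplicativity \eqref{eq:6}, namely $\mT_\sigma \cdot \mT_\tau \subseteq \mT_{\sigma+\tau}$, follows from the observation that a product of a matrix of bandwidth $<\sigma$ with one of bandwidth $<\tau$ has bandwidth $<\sigma+\tau$, since $(AB)(k,l) = \sum_m A(k,m)B(m,l)$ is nonzero only when both $|k-m|<\sigma$ and $|m-l|<\tau$, forcing $|k-l| < \sigma+\tau$. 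For the involution conditions \eqref{eq:26}, the identity matrix $\one$ lies in $\mT_1$ (bandwidth zero), and each $\mT_\sigma$ is self-adjoint because the adjoint $A^*(k,l) = \overline{A(l,k)}$ preserves the bandwidth $|k-l|$. I would also confirm that each $\mT_N$ is closed in $\mA$, which holds because on a solid matrix algebra the coordinate projections onto side-diagonals are continuous, so the bandwidth constraint defines a closed subspace.

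Having checked that $(\mT_k)$ is a valid approximation scheme, part~(i) is then exactly Proposition~\ref{prop:apprelements1} applied to this scheme: the band-dominated matrices are by definition the closure $\mA_0 = \overline{\bigcup_k \mT_k}^\mA$, which that proposition identifies as a closed $*$-subalgebra, and since $\mA$ is symmetric (being a symmetric \MA), Proposition~\ref{prop:apprelements1} further gives that $\mA_0$ is \IC\ in $\mA$. Part~(ii) is likewise an immediate instance of Proposition~\ref{appspIC}: because $\mA$ is a symmetric \BA\ equipped with the approximation scheme $(\mT_k)$, that proposition asserts precisely that each $\mE^p_r(\mA)$ is \IC\ in $\mA$.

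I expect no serious obstacle here, since the corollary is purely a matter of recognizing that banded matrices fit the axiomatic framework. The one point requiring genuine (if routine) care is the verification of the multiplicativity property \eqref{eq:6}, because this is the only axiom whose truth depends on the specific algebraic structure of matrix multiplication rather than being formal; everything else is bookkeeping about bandwidths and adjoints. Once \eqref{eq:6} is in hand, both conclusions follow by direct citation of the two general propositions, so the proof is essentially a verification that the hypotheses of those propositions hold in the matrix setting.
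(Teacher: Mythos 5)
Your proposal is correct and takes essentially the same route as the paper: the paper simply observes that $(\mT_k)_{k\geq 0}$ is an approximation scheme and then obtains (i) from Proposition~\ref{prop:apprelements1} and (ii) from Proposition~\ref{appspIC}, exactly as you do. One very minor remark: the closedness of each $\mT_N$ should be justified by the continuity of the point evaluations $A \mapsto A(k,l)$ (which follows from the continuous embedding $\mA \hookrightarrow \bop$) rather than by solidity, since solidity is not among the hypotheses of the corollary.
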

Theorem~\ref{thm-intro-matic} from the introduction follows immediately by choosing $p=\infty$ and $\mA=\bopzd$.

For the algebra of bounded operators on vector-valued $\ell^p$-spaces
special instances of (i) have been 
obtained in~\cite{Rabinovich98, rrs04}.

Loosely speaking, if a matrix can be well approximated by banded matrices, then its inverse can be approximated
by banded matrices with the same quality. 
This property expresses a form of off-diagonal decay, which we now
relate to the standard notions.
\begin{cor} \label{jaffident} \hspace{1cm}
\begin{enumerate}
\item[(i)] Assume that $\mA $ is a solid matrix algebra continuously
  embedded in $\mB (\ell ^2(\bbZd))$. Then  $\mE^\infty_r(\mA) \subseteq \mJ _r $, and $A\in \mE^\infty_r(\mA)$
  decays at least polynomially off the diagonal.
\item[(ii)] For the Jaffard algebra $\mJ_s$ we have
  \[
  \mE^\infty_r(\mJ_s)= \mJ_{s+r}.
  \]
  As a consequence of Corollary~\ref{cor:apprmas} $\mJ_{s+r}$ is \IC\ in $\mJ_s$ and in $\bopzd$ for $s > d$ and
  $r>0$. 
\end{enumerate}
\end{cor}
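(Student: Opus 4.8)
The plan is to use \emph{solidity} to pin down the approximation error $E_N(A)$ exactly, and then read off the diagonal decay. Write $A^{\ge N}=\sum_{\abs{k}\ge N}\hat A(k)$ for the tail of $A$ formed by the side diagonals of index $\abs{k}\ge N$; this lies in $\mA$ by solidity. For any banded $X\in\mT_N$ the entries of $A-X$ agree with those of $A$ off the central band $\abs{k-l}<N$, so $\abs{A^{\ge N}(k,l)}\le\abs{(A-X)(k,l)}$ for all $k,l$. Solidity then gives $\norm{A^{\ge N}}_\mA\le\norm{A-X}_\mA$, while the choice $X=\sum_{\abs{k}<N}\hat A(k)$ attains $A-X=A^{\ge N}$. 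Hence
\[
E_N(A)=\norm{A^{\ge N}}_\mA=\Bignorm{\sum_{\abs{k}\ge N}\hat A(k)}_\mA .
\]
This identity is the crux of both parts; everything else is bookkeeping, and the relevant index set for the banded scheme is $\Lambda=\bbN_0$, so $N=\abs{m}\in\bbN_0$ is always an admissible choice.

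For part (i) I would first isolate a single diagonal. For $\abs{m}\ge N$ the side diagonal $\hat A(m)$ is a single diagonal of $A^{\ge N}$, so solidity yields $\norm{\hat A(m)}_\mA\le\norm{A^{\ge N}}_\mA=E_N(A)$. The continuous embedding $\mA\hookrightarrow\bopzd$ then gives $\norm{\hat A(m)}_{\ell^2\to\ell^2}\le C_0 E_N(A)$. Taking $N=\abs{m}$ and using $A\in\mE_r^\infty(\mA)$, i.e.\ $E_N(A)\le C(N+1)^{-r}$, we get $\norm{\hat A(m)}_{\ell^2\to\ell^2}(1+\abs{m})^r\le C_0C$ for every $m$. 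By the side-diagonal formula \eqref{eq:ch2} for the Jaffard norm this says precisely $A\in\mJ_r$, which by the definition \eqref{eq:jaffnorm} means $\abs{A(k,l)}\le C'(1+\abs{k-l})^{-r}$, the claimed polynomial off-diagonal decay.

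For part (ii) I would replace the crude embedding estimate by the exact Jaffard structure. Specializing the identity above to $\mA=\mJ_s$ and applying \eqref{eq:ch2} to the tail gives
\[
E_N(A)=\sup_{\abs{k}\ge N}\norm{\hat A(k)}_{\ell^2\to\ell^2}(1+\abs{k})^s .
\]
If $A\in\mJ_{s+r}$, then $\norm{\hat A(k)}_{\ell^2\to\ell^2}(1+\abs{k})^s\le\norm{A}_{\mJ_{s+r}}(1+\abs{k})^{-r}$, so $E_N(A)\le\norm{A}_{\mJ_{s+r}}(1+N)^{-r}$ and $A\in\mE_r^\infty(\mJ_s)$. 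Conversely, if $A\in\mE_r^\infty(\mJ_s)$, the choice $N=\abs{m}$ gives $\norm{\hat A(m)}_{\ell^2\to\ell^2}(1+\abs{m})^s\le E_{\abs{m}}(A)\le C(1+\abs{m})^{-r}$, i.e.\ $\norm{A}_{\mJ_{s+r}}\le C$; thus $\mE_r^\infty(\mJ_s)=\mJ_{s+r}$ with equivalent norms. The inverse-closedness then follows with no extra work: for $s>d$ the class $\mJ_s$ is a symmetric matrix algebra by Proposition~\ref{fundamental}, so Corollary~\ref{cor:apprmas}(ii) makes $\mE_r^\infty(\mJ_s)=\mJ_{s+r}$ inverse-closed in $\mJ_s$, and transitivity together with the inverse-closedness of $\mJ_s$ in $\bopzd$ (Proposition~\ref{fundamental}) yields that $\mJ_{s+r}$ is \IC\ in $\bopzd$.

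The main obstacle is entirely contained in the first paragraph: one must check that solidity forces truncation to the central band to be the \emph{optimal} banded approximation, so that $E_N(A)$ collapses to the norm of a single tail and, for part~(i), further to the norm of an individual side diagonal. Once this reduction is secured, both inclusions reduce to one-line estimates driven by the substitution $N=\abs{m}$ and the side-diagonal formula \eqref{eq:ch2}.
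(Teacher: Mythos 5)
Your proposal is correct and follows essentially the same route as the paper: identify $E_N(A)$ with the $\mA$-norm of the tail $\sum_{\abs k\ge N}\hat A(k)$ via solidity, bound a single side diagonal by that tail, and then read off the Jaffard norm through the side-diagonal formula \eqref{eq:ch2}. The only difference is cosmetic — you explicitly justify that truncation to the central band is the optimal banded approximation (which the paper merely asserts) and you track the embedding constant, both welcome but not a change of method.
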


\begin{proof}
  (i) If $\mA$ is a solid \MA , then for $A \in \mA$ the banded matrix
  $\sum_{\abs k <n}\hat A (k)$ is a best 
  approximation to $A$ in $\mT_n$. Hence
  \begin{equation*}
    E_n(A)=\norm{A-\sum_{\abs k < n}\hat A(k)}_\mA =\norm{\sum_{\abs k
        \geq n}\hat A(k)}_\mA \, . 
  \end{equation*}
If $A\in \mE^\infty _r(\mA )$, the  size of the $n$-th diagonal is
majorized by 
  \[
  \|\hat A (n) \|_{\mA } \leq \norm{\sum_{\abs k \geq n}\hat A(k)}_\mA \leq \norm{A}_{\mE^\infty_r(\mA)}
  (n+1)^{-r}\, .
  \]
  Since $\mA $ is embedded into $\bop $, this implies that
$$
\|\hat A (n) \|_{\ell^2 \to \ell^2 } \leq \|\hat A (n) \|_{\mA } \leq \norm{A}_{\mE^\infty_r(\mA)} (n+1)^{-r}\, ,
$$
and thus $A \in \mJ _r$.

(ii) For $A\in \mJ_s$ we obtain
\begin{equation*}
  \label{eq:18}
  E_n(A)= \norm{\sum_{\abs k \geq n}\hat A(k)}_{\mJ_s} = \sup_{\abs{u-v}
    \geq n} \abs{A (u,v)}(1+\abs{u-v})^s. 
\end{equation*}
This means
\begin{equation*}
  \label{eq:19}
  \begin{split}
    A \in \mE^\infty_r(\mJ_s) &\Leftrightarrow E_n(A)(1+n)^r \leq C \text{ for all } n>0 \\
    & \Leftrightarrow \sup_{\abs{u-v} \geq n} \abs{A (u,v)}(1+\abs{u-v})^s (1+n)^r \leq C \text{ for all } n>0.
  \end{split}
\end{equation*}
This is true if and only if
\[
\|\hat A (n) \|_{\ell^2 \to \ell^2} (1+n)^{s+r } = \sup_{\abs{u-v} = n} \abs{A (u,v)}(1+\abs{u-v})^{s+r} \leq C \text{ for all }
n>0,
\]
and we have shown that $\|A\|_{\mJ _{s+r } } = \sup _{n\in \bbZd} \|\hat A (n)\|_{\ell^2 \to \ell^2} (1+|n|)^{s+r} <\infty $ 
 or $A \in \mJ_{s+r}$.
\end{proof}
This corollary helps to simplify the proof of Jaffard's orginal theorem in \cite{Jaffard90}. Suppose we already
know that $\mJ _{d+\epsilon }$ is inverse-closed in $\bop$ for $0<\epsilon \leq \epsilon _0$. By
Corollary~\ref{cor:apprmas} and~\ref{jaffident} $\mJ _{s}$, $s>d+\epsilon $, is inverse-closed in $\mJ
_{d+\epsilon }$ and hence in $\bop$. Thus it suffices to prove Jaffard's result for the range $d<r<d+\epsilon_0 $
for some small $\epsilon_0 >0$.
\\
\\
(3) \emph{Approximation in UHF algebras.} In order to illustrate the potential of approximation methods for
operator algebras, we discuss the approximation properties in \emph{UHF algebras}.

A \emph{uniformly hyperfinite (UHF) algebra} (Glimm ~\cite{Glimm60}) is the direct limit of a directed system
$\set{M_{n_k}, \phi_k}$ of full matrix algebras $M_{n_k}$. Precisely $M_{n_k}$ is the full algebra of $n_k \times
n_k$, $\{n_k\}$ is a sequence of positive integers $n_k$, such that $n_k$ divides $n_{k+1}$ ($n_{k+1}=r_k n_k$)
for all $k \in \bbN$ and $\lim _{k \to \infty } n_k = \infty $, and the unital embedding $\phi_k: M_{n_k} \to
M_{n_{k+1}}$ is given by $ A \mapsto A \otimes I_{r_k}$.
Suppressing the embedding maps, we can write
\begin{equation*}
  \mathrm{UHF}((n_k))=\mathrm{UHF}(\vec n)= \overline{\bigcup_k M_{n_k}}^{\bop}
\end{equation*}
and obtain a $C^*$-algebra. We refer to \cite{ Bratteli72, Glimm60} for the deeper properties of UHF algebras. Elements of
$\mathrm{UHF}(\vec n)$ can be understood as follows: Let
\begin{equation*}
  \phi_{k,\infty} \colon M_{n_k} \to \bopn; \quad A \mapsto 
  \begin{pmatrix}
    A & {} & {} \\
    {} & A & {} \\
    {} & {} & \ddots
  \end{pmatrix}
\end{equation*}
the natural embedding of $M_{n_k}$ into \bopn. Then any element of $\mathrm{UHF}(\vec n)$ can be written as a
limit in the operator norm on $\ell ^2(\bbN)$.
\begin{equation} \label{eq:24} A=\sum_k \phi_{k,\infty}(A_k), \qquad A_k \in M_{n_k}.
\end{equation}
\\
The very definition of the UHF algebras suggests a natural approximation scheme, namely the subalgebras
$M_{n_k}$. More precisely, let
\begin{equation*}
  X_0={0}, \quad X_k=\phi_{k,\infty}(M_{n_k}),\: k\geq 1.
\end{equation*}
In this situation, \eqref{eq:6} can be improved to
\begin{equation}
  \label{eq:21}
  X_n X_m \subseteq X_{\max (n,m)}.
\end{equation}
Property~\eqref{eq:21} implies an approximation result that is stronger than Propositions~\ref{appspBA} and \ref{appspIC}. In fact, choose an arbitrary weight function $w>0$ on $\bbN_0$ and define the
\emph{generalized approximation space} $\tilde E^p_w$, $1\leq p\leq\infty$, by the norm
\begin{equation*}
  \norm a_{\tilde E^p_w}=\left(\sum_{n\geq 0} E_n (a)^p w(n)^p\right)^{1/p}.
\end{equation*}
Since $E_0(a)=\norm a _\mA$ and $\norm{a}_\mA \leq \frac{1}{w(0)} \norm{a}_{\tilde E^p_w}$ the generalized
approximation space $\tilde E^p_w$ is embedded into \mA.  Since every
$X_k$ is an algebra, the estimate~(\ref{eq:frue}) can be improved to
\begin{equation}
  \label{eq:22}
  E_n(ab)\leq C\bigl(\norm{a}_\mA E_n(b)+\norm{b}_\mA E_n(a)\bigr) \, ,
\end{equation}
and consequently
\begin{equation}
  \label{eq:23}
  \norm{ab}_{\tilde E^p_w} \leq C\bigl(\norm{a}_\mA \norm{b}_{\tilde
    E^p_w}+ \norm{a}_{\tilde E^p_w} \norm{b}_\mA \bigr), 
\end{equation}
where $\mA=\mathrm{UHF}(\vec n)$.  Applying now Brandenburg's trick from Section~\ref{sec:method-brandenburg}, we obtain a
new class of ``smooth'' inverse-closed subalgebras of
$\mathrm{UHF}(\vec n)$. 
\begin{cor}
  For $1 \leq p \leq \infty$ and arbitrary $w>0$ the generalized
  approximation space $\tilde E^p_w$ is a dense 
  $*$-subalgebra of $\mathrm{UHF}(\vec n)$,  and  $\tilde E^p_w$ is
  \IC\ in $\mathrm{UHF}(\vec n)$. 
\end{cor}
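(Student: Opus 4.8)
The plan is to assemble the corollary from the inequality~\eqref{eq:23}, which is already in hand, together with Brandenburg's trick and the fact that $\mathrm{UHF}(\vec n)$ is a $C^*$-algebra. The substantive analytic content has already been extracted in deriving~\eqref{eq:22} and~\eqref{eq:23}; what remains is essentially bookkeeping, organized around the three claims: Banach $*$-algebra, density, and inverse-closedness.

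First I would verify that $\tilde E^p_w$ is a Banach $*$-algebra. That the defining expression is a norm is routine (definiteness holds because $E_0(a)=\norm{a}_\mA$ and $w(0)>0$, and the triangle inequality follows from subadditivity of $E_n$ together with Minkowski's inequality). Completeness follows the standard template for weighted $\ell^p$ spaces: since $\norm{a}_\mA \leq \frac{1}{w(0)}\norm{a}_{\tilde E^p_w}$, any $\tilde E^p_w$-Cauchy sequence $(a_j)$ is $\mA$-Cauchy and hence converges to some $a\in\mathrm{UHF}(\vec n)$; the continuity estimate $\abs{E_n(a)-E_n(b)}\leq\norm{a-b}_\mA$ for each fixed $n$, combined with a Fatou argument applied to the defining sum (with the obvious modification for $p=\infty$), shows that $a\in\tilde E^p_w$ and that $a_j\to a$ in the $\tilde E^p_w$-norm. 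The algebra property is immediate from~\eqref{eq:23}: using once more $\norm{\cdot}_\mA\leq\frac{1}{w(0)}\norm{\cdot}_{\tilde E^p_w}$ turns~\eqref{eq:23} into $\norm{ab}_{\tilde E^p_w}\leq C'\norm{a}_{\tilde E^p_w}\norm{b}_{\tilde E^p_w}$, so that after passing to the equivalent submultiplicative norm described in Section~\ref{sec:concepts-from-theory} the space $\tilde E^p_w$ is a \BA. Finally, since the scheme satisfies $X_n=X_n^*$ by~\eqref{eq:26}, the approximation errors obey $E_n(a^*)=E_n(a)$ for all $n$, whence $\norm{a^*}_{\tilde E^p_w}=\norm{a}_{\tilde E^p_w}$ and $\tilde E^p_w$ is a $*$-subalgebra.

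For density, I would observe that each $X_k=\phi_{k,\infty}(M_{n_k})$ lies in $\tilde E^p_w$: if $a\in X_k$ then $E_n(a)=0$ for all $n\geq k$, so $\norm{a}_{\tilde E^p_w}^p=\sum_{n<k}E_n(a)^p w(n)^p<\infty$. By the very definition $\mathrm{UHF}(\vec n)=\overline{\bigcup_k X_k}^{\mA}$, so $\bigcup_k X_k$ is dense in $\mathrm{UHF}(\vec n)$; since $\bigcup_k X_k\subseteq\tilde E^p_w\subseteq\mathrm{UHF}(\vec n)$, the space $\tilde E^p_w$ is dense as well.

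It remains to prove inverse-closedness, and here the argument is exactly Brandenburg's trick. Inequality~\eqref{eq:23} is precisely the hypothesis~\eqref{eq:25} for the nested pair $\tilde E^p_w\subseteq\mathrm{UHF}(\vec n)$, so the trick of Section~\ref{sec:method-brandenburg} yields $\rho_{\tilde E^p_w}(c)=\rho_{\mathrm{UHF}(\vec n)}(c)$ for every $c\in\tilde E^p_w$, in particular for all self-adjoint $c$. Since $\mathrm{UHF}(\vec n)$ is a $C^*$-algebra it is symmetric, so Hulanicki's Lemma (Proposition~\ref{prop:lemma-hulanicki}) applies and gives that $\tilde E^p_w$ is \IC\ in $\mathrm{UHF}(\vec n)$. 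There is no real obstacle once~\eqref{eq:23} is available; the only step demanding genuine care is the completeness of the $\tilde E^p_w$-norm, and even that is a routine weighted-$\ell^p$ verification.
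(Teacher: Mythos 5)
Your proposal is correct and follows the same route the paper indicates: inequality~\eqref{eq:23} feeds Brandenburg's trick to equalize spectral radii, and Hulanicki's Lemma (applicable since $\mathrm{UHF}(\vec n)$ is a $C^*$-algebra, hence symmetric, and $\one \in X_1$) then yields inverse-closedness, while density follows from $\bigcup_k X_k \subseteq \tilde E^p_w$. The completeness and $*$-invariance details you supply are exactly the routine verifications the paper leaves implicit.
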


\section{Smoothness and Approximation with Bandlimited Elements}
\label{sec:smoothn-appr-with}
So far the two constructions of \IC\ subalgebras are based on different structural features of \BA s, namely,
derivations or commutative automorphism groups, and approximation schemes. Again classical approximation theory
teaches us how to relate smoothness properties to approximation properties. The prototypes of such a connection
are the Jackson-Bernstein theorems for polynomial approximation of periodic
functions. 

In this section we develop a similar theory for \BA s with an automorphism group $\Psi $. The application to
matrix algebras then supports once more the insight that ``smoothness of matrices'' amounts to their
off-diagonal decay. 
The general setup is again that of Section~\ref{sec:smooth}. Let \mA\ be a unital \BA\ with a uniformly bounded
$d$-parameter group of automorphisms $\Psi$.  Let $\delta_{e_k}, 1\leq k \leq d$ denote the canonical generators
of $\Psi$. If \mA\ possesses an involution, we assume that $\Psi$ consists of $*$-automorphisms.

\subsection{Bandlimited Elements and Their Spectral Characterization}
\label{sec:spectr-subsp-band}
We first need an analogue of the trigonometric polynomials in the context of a \BA\ with an automorphism
group. 
\begin{defn}
  An element $a \in \mA$ is \emph{$\sigma$-bandlimited} for $\sigma >0$, if there is a constant $C$ such that
  \begin{equation}
    \label{eq:20}
    \norm{\delta^\alpha( a)}_\mA \leq C ( 2 \pi \sigma)^{\abs \alpha}
  \end{equation}
  for every multi-index $\alpha$. An element is \emph{bandlimited}, if it is $\sigma$-bandlimited for
  some $\sigma
  >0$. 
  Inequality \eqref{eq:20} is a generalized Bernstein inequality.
\end{defn}
\begin{ex}
  In $C(\bbT)$ the $N$-bandlimited elements are exactly the
  trigonometric polynomials of degree $N\in \bbN _0$. If $f$ is a
  trigonometric polynomial of degree $N$, then, by the classical
  Bernstein inequality, we have  $ \norm{f'}_\infty\leq 2
  \pi N \norm{f}_\infty.  $ This implies (\ref{eq:20}). 

 Conversely,
  if $f \in C(\bbT )$ is $N$-bandlimited in the 
  sense of \eqref{eq:20}, then
  \begin{equation*}
    C (2\pi N)^k \geq   \norm{D^kf}_{L^\infty(\bbT)}\geq
    \norm{D^kf}_{L^2(\bbT)}=\norm{((2 \pi i l)^k \hat f (l))_{l \in \bbZ}
    }_{\ell^2} \geq (2 \pi \abs m)^k \abs{\hat f(m)} \, 
  \end{equation*}
for all $m\in \bbZ $.  This is true for all $k\geq 0$, whence
$\hat{f}(m) = 0$ for $|m| >N$. 
  See~\cite[3.4.2]{Trigub04} for related statements.
\end{ex}
We next generalize Fourier arguments to obtain an alternative
characterization of \BL\ elements in a \BA. 
 To avoid vector-valued distributions, we need some technical preparation.
\begin{defn}
  The \emph{spectrum} of $a \in C(\mA)$ is
  \begin{equation}
    \label{eq:27}
    \spec(a)= \bigcup_{a' \in \mA'}\supp \,\mF(G_{a',a}),
  \end{equation}
  where the Fourier transform $\mF$ is used in the distributional sense and $G_{a'a,}(t)= \langle a', \psi _t a\rangle$
  was defined in (\ref{eq:45}).
\end{defn}
An equivalent but less convenient definition is given in
\cite[Def. 2.2.5]{bratteli96}.  

Here is a spectral characterization of $\sigma$-\BL\ elements in $\mA $.
\begin{prop}
  An element $a \in C(\mA)$ is $\sigma$-\BL, if and only if $\spec(a) \subseteq [-\sigma,\sigma]^d$.
\end{prop}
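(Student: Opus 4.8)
The plan is to transfer everything to the scalar functions $G_{a',a}(t)=\inprod{a',\psi_t(a)}$ and then invoke the classical (scalar) equivalence between Bernstein inequalities and compact spectral support. The bridge is the identity $D^\alpha G_{a',a}(t)=\inprod{a',\psi_t(\delta^\alpha(a))}$, which holds whenever $a\in\mD(\delta^\alpha)$ by the commutation relation of Proposition~\ref{prop:density}(ii); evaluating at $t=0$ gives $D^\alpha G_{a',a}(0)=\inprod{a',\delta^\alpha(a)}$, while globally $\norm{D^\alpha G_{a',a}}_\infty\le M_\Psi\norm{a'}_{\mA'}\norm{\delta^\alpha(a)}_\mA$. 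Both are immediate from $\norm{\psi_t}_{\mA\to\mA}\le M_\Psi$.

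For the forward implication, suppose $a$ is $\sigma$-\BL, so $a\in C^\infty(\mA)$ and $\norm{\delta^\alpha(a)}_\mA\le C(2\pi\sigma)^{\abs\alpha}$. Then each $G_{a',a}$ is a bounded $C^\infty$ function on \bbRd\ whose derivatives obey $\norm{D^\alpha G_{a',a}}_\infty\le C M_\Psi\norm{a'}_{\mA'}(2\pi\sigma)^{\abs\alpha}$. The Taylor series then extends $G_{a',a}$ to an entire function with $\abs{G_{a',a}(x+iy)}\le C'e^{2\pi\sigma\abs y}$, so the scalar Paley--Wiener--Schwartz theorem yields $\supp\mF(G_{a',a})\subseteq[-\sigma,\sigma]^d$. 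Taking the union over all $a'\in\mA'$ gives $\spec(a)\subseteq[-\sigma,\sigma]^d$.

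For the reverse implication the difficulty is regularity: the hypothesis $\spec(a)\subseteq[-\sigma,\sigma]^d$ only constrains the weak transforms $\mF(G_{a',a})$, so I must first upgrade it to $a\in C^\infty(\mA)$ before $\delta^\alpha(a)$ even makes sense. I would fix $\phi\in S(\bbRd)$ with $\mF\phi$ smooth, compactly supported, and $\equiv1$ on $[-\sigma,\sigma]^d$, and form the Bochner integral $b_t=\int_{\bbRd}\psi_s(a)\phi(t-s)\,ds$, which converges in \mA\ since $s\mapsto\psi_s(a)$ is continuous and bounded by $M_\Psi\norm{a}_\mA$ while $\phi\in L^1$. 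Pairing with any $a'$ gives $\inprod{a',b_t}=(G_{a',a}*\phi)(t)=G_{a',a}(t)$, because $\mF\phi\equiv1$ on $\supp\mF(G_{a',a})$; hence $b_t=\psi_t(a)$, a smooth reproducing formula. Differencing it and using dominated convergence for the Bochner integral (the difference quotients of $\phi$ are dominated by an integrable function) shows that $\tfrac{\psi_{he_k}(a)-a}{h}$ converges in \mA, so $a\in\mD(\delta_{e_k})$ with $\delta_{e_k}(a)=\int_{\bbRd}\psi_s(a)(\partial_{e_k}\phi)(-s)\,ds$; iterating yields $a\in C^\infty(\mA)$.

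Once smoothness is secured, the Bernstein estimate follows by duality: for every $\alpha$,
$$\norm{\delta^\alpha(a)}_\mA=\sup_{\norm{a'}_{\mA'}\le1}\abs{\inprod{a',\delta^\alpha(a)}}=\sup_{\norm{a'}_{\mA'}\le1}\abs{D^\alpha G_{a',a}(0)}\le\sup_{\norm{a'}_{\mA'}\le1}\norm{D^\alpha G_{a',a}}_\infty.$$
Since $\supp\mF(G_{a',a})\subseteq[-\sigma,\sigma]^d$, the classical Bernstein inequality gives $\norm{D^\alpha G_{a',a}}_\infty\le(2\pi\sigma)^{\abs\alpha}\norm{G_{a',a}}_\infty\le(2\pi\sigma)^{\abs\alpha}M_\Psi\norm{a}_\mA\norm{a'}_{\mA'}$, whence $\norm{\delta^\alpha(a)}_\mA\le M_\Psi\norm{a}_\mA(2\pi\sigma)^{\abs\alpha}$, i.e.\ the generalized Bernstein inequality~(\ref{eq:20}) holds with $C=M_\Psi\norm{a}_\mA$, so $a$ is $\sigma$-\BL. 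The main obstacle is precisely this regularity step of the reverse direction: constructing the vector-valued reproducing formula and justifying differentiation under the Bochner integral, thereby passing from weak (scalar) bandlimitedness to genuine smoothness in~\mA.
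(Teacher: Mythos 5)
Your proposal is correct, and its overall architecture --- reducing everything to the scalar functions $G_{a',a}$ via the norm equivalence $\norm{\delta^\alpha(a)}_\mA \asymp \sup_{\norm{a'}_{\mA'}\le 1}\norm{D^\alpha G_{a',a}}_\infty$, and then invoking the classical Bernstein inequality and the Paley--Wiener--Schwartz theorem --- is exactly the paper's. The direction ``$\sigma$-bandlimited $\Rightarrow$ $\spec(a)\subseteq[-\sigma,\sigma]^d$'' is carried out identically in both proofs (uniformly convergent Taylor series, extension to an entire function of exponential type $\sigma$, then Paley--Wiener--Schwartz). Where you genuinely diverge is the regularity step in the converse direction. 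The paper disposes of it in one line: since each $G_{a',a}$ extends holomorphically, the map $t\mapsto\psi_t(a)$ is (weakly, hence strongly) holomorphic, which yields the existence of all $\delta^\alpha(a)$; this silently invokes the weak-to-strong analyticity principle for Banach-space-valued functions. You instead build a reproducing identity $\psi_t(a)=\int_{\bbRd}\psi_s(a)\phi(t-s)\,ds$ with a Schwartz kernel whose Fourier transform equals $1$ on $[-\sigma,\sigma]^d$, verify it weakly using $G_{a',a}*\phi=G_{a',a}$, and then differentiate under the Bochner integral. Your route is longer but more self-contained and makes explicit exactly where the hypothesis $a\in C(\mA)$ is used (to make sense of the Bochner integral); it is also the same convolution machinery the paper itself sets up later in \eqref{eq:module}--\eqref{eq:3}, so nothing you use is foreign to the framework. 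The paper's route buys brevity at the cost of an unstated appeal to Dunford's theorem. Both arguments close with the same duality computation, and your final constant $M_\Psi\norm{a}_\mA$ matches the paper's estimate \eqref{xxx}.
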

\begin{proof}
  Assume first that $\spec(a) \subseteq [-\sigma,\sigma]^d$. Then by definition
  \begin{equation*}
    \supp \mF(G_{a',a}) \subseteq [-\sigma,\sigma]^d \quad \text{ for all } a' \in \mA'.
  \end{equation*}
  By the Paley-Wiener-Schwartz theorem~\cite[3.4.9]{Trigub04} the bandlimited function $G_{a',a}$ can be extended to
  an entire function of exponential type $\sigma$, i.e., for every $\epsilon>0$ there is a constant $A =
  A(\epsilon) $, such that
  \[
  \abs {G_{a',a} (t+iy)} \leq A e^{(\sigma+\epsilon) \abs{y}} \qquad \text{ for } t,y \in \bbRd \, .
  \]
  Since $G_{a',a}= \langle a', \psi _t(a)\rangle $ is holomorphic for all $a' \in \mA '$, the mapping $t \mapsto
  \psi_t(a)$ is holomorphic.  This implies the existence of $\delta^\alpha(a)\in \mA$ for
  each multi-index $\alpha$.  To deduce \eqref{eq:20} we use the Bernstein inequality for entire
  functions~\cite[3.4.8]{Trigub04}, 
  \begin{equation}
    \label{eq:11}
    \norm{D^\alpha G_{a',a}}_\infty \le (2 \pi \sigma)^{\abs{\alpha}} \norm{G_{a',a}}_\infty
  \end{equation}
  for all $a' \in \mA'$. In particular, with~\eqref{eq:ch4}   
  \begin{equation}
\label{xxx}
    \begin{split}
      \|\delta ^\alpha (a)\|_{\mA } &\asymp \sup _{\|a'\|_{\mA '} \leq
        1} \norm{D^\alpha G_{a',a}}_\infty \\ 
      &\le (2\pi \sigma)^{\abs{\alpha}}
      \sup_{\norm{a'}_{\mA'}\leq 1}\norm{G_{a'a,}}_\infty \leq M_\Psi (2 \pi \sigma)^{\abs{\alpha}} \norm{a}_\mA.
    \end{split}
  \end{equation}
  Therefore $a$ is $\sigma$-\BL .

  Conversely, assume that $a$ is \BL\ with bandwidth $\sigma$. Then
  for arbitrary $t_0 \in \bbRd$ and $a'\in \mA  '$ the norm
  equivalence \eqref{eq:ch4} implies   that 
  \begin{equation}
    \label{eq:hmhm1}
    |D^\alpha G_{a',a}(t_0)| \leq  \|a'\|_{\mA '} \, \|\delta ^\alpha
    (a)\|_{\mA} \leq C (2\pi \sigma )^{|\alpha|}  \, .
  \end{equation}
Consequently the Taylor series of $G_{a',a}$ at $t_0$  converges
uniformly on $\bbRd$  and can be extended to an entire function
$$
G_{a',a}(z) = \sum _{\alpha \geq 0} \frac{D^\alpha
  G_{a',a}(t_0)}{\alpha !} (z-t_0)^\alpha \qquad \text{ for } z \in \bbCd \,
.
$$
The extension of $G_{a',a}$ is clearly independent of the base point
$t_0$ and satisfies the growth estimate
$$
|G_{a'a}(z)| \leq C \sum _{\alpha \geq 0} \frac{(2\pi \sigma
  )^{|\alpha|}}{\alpha !} |z-t_0|^{|\alpha|} \leq C e^{2\pi \sigma
  |z-t_0|} \, .
$$
For $z=t_0 +iy$, $y\in \bbRd$, we obtain  $|G_{a'a}(t_0+iy)| \leq C  e^{2\pi \sigma
  |y|}$, and   thus $G_{a',a}$ is an entire functions of exponential type
$\sigma$ \cite[4.8.3]{Timan63} for every $a'\in \mA '$. Once  
  again, the  Paley-Wiener-Schwartz theorem implies that  $\supp
  \mF(G_{a',a})\subseteq [-\sigma,\sigma]^d$ for all $a' \in \mA$.  We
  conclude   that $\spec(a)$ is contained in $[-\sigma,\sigma]^d$.
\end{proof}
\subsection{Periodic Group Actions}
\label{sec:peri-group-acti}
If the automorphism group $\Psi $ on \mA\ is \emph{periodic} (that is, there is a period $P \in \bbRd_+$ such
that $\psi_{t+P} = \psi_t$ for all $t$), the bandlimited elements can be described more explicitly by means of a
\BA-valued Fourier series. Without loss of generality we will assume that
$P=(1,\dotsc,1)$. 
Then the $k$-th Fourier coefficient of $a \in C(\mA$) is
\begin{equation}
  \label{eq:15}
  \hat a (k) = \int_{\bbTd} {\psi_t(a) \, e^{- 2 \pi i k \cdot t} }\, dt.
\end{equation}
By an observation of Baskakov~\cite{Bas90} for the action $\chi _t(A) = M_tAM_{-t}$ on a matrix $A$, the Fourier coefficient $\int _{\bbTd} \chi _t(A)e^{-2\pi i k\cdot t} \, dt $ 
is exactly the $k$-th side-diagonal $\hat A (k)$ of $A$ (see also (\ref{eq:10})). So there is no ambiguity in our notation.  The formal series $\sum_{k \in \bbZd} \hat a(k)
e^{2 \pi i k \cdot t} $ is the Fourier series of $a$ (see deLeeuw's work~\cite{deleeuw73,Deleeuw75, Deleeuw77}
for first developments of operator-valued Fourier series.)

\begin{prop}[ {\cite[Prop.~3.4]{Deleeuw75}}]\label{prop_DeLeeuw} Let
  $\Psi$ be a periodic  automorphism group  on \mA.  The following statements
  are equivalent for $a \in \mA$. 
  \begin{enumerate}
  \item[(i)] \label{item:1} $a \in C(\mA)$.
  \item[(ii)] \label{item:2} The Fejer-means of the Fourier series of
    $a$ converge in norm: 
    \[
    \psi_t (a)=\lim_{n \to \infty} \sum_{\abs k _\infty \leq n} \prod_{j=1}^d\Bigl(1-\frac{\abs {k_j}}{n+1} \Bigr) \hat a(k) e^{2 \pi i k \cdot t}.
    \]
  \item[(iii)] \label{item:3} The $C1$-means of the Fourier
    coefficients converge in norm to $a$: 
    \[
    a=\lim_{n \to \infty} \sum_{\abs k _\infty \leq n} \prod_{j=1}^d \Bigl(1-\frac{\abs {k_j}}{n+1} \Bigr) \hat a(k).
    \]
  \end{enumerate}
\end{prop}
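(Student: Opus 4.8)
The plan is to prove the cycle (i)$\Rightarrow$(ii)$\Rightarrow$(iii)$\Rightarrow$(i). The engine of the whole argument is the observation that the Fejér mean displayed in (ii) is nothing but a convolution of the orbit map $t\mapsto\psi_t(a)$ with a summability kernel. Inserting the definition \eqref{eq:15} of $\hat a(k)$ and interchanging the finite sum with the integral, one obtains
\[
\sum_{\abs{k}_\infty\le n}\prod_{j=1}^d\Bigl(1-\frac{\abs{k_j}}{n+1}\Bigr)\hat a(k)\,\cexp[k\cdot t]
=\int_{\bbTd}\psi_s(a)\,K_n(t-s)\,ds ,
\]
where $K_n(u)=\prod_{j=1}^d F_n(u_j)$ is the $d$-fold product of one-dimensional Fejér kernels $F_n$. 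The properties I would record are that $K_n\ge 0$, $\int_{\bbTd}K_n=1$, and $\int_{\abs{u}_\infty\ge\delta}K_n(u)\,du\to 0$ for every $\delta>0$ (the last by a union bound over the coordinates), so that $(K_n)$ is a positive approximate identity concentrating at the origin of $\bbTd$.

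For (i)$\Rightarrow$(ii) I would first upgrade strong continuity at $0$ to uniform continuity of the orbit map: since $\psi_{t-u}(a)-\psi_t(a)=\psi_t(\psi_{-u}(a)-a)$, uniform boundedness gives $\norm{\psi_{t-u}(a)-\psi_t(a)}_\mA\le M_\Psi\norm{\psi_{-u}(a)-a}_\mA$, which tends to $0$ as $u\to0$ uniformly in $t$ whenever $a\in C(\mA)$. Using $\int_{\bbTd}K_n=1$ and the substitution $u=t-s$,
\[
\psi_t(a)-\sum_{\abs{k}_\infty\le n}\prod_{j=1}^d\Bigl(1-\frac{\abs{k_j}}{n+1}\Bigr)\hat a(k)\,\cexp[k\cdot t]
=\int_{\bbTd}K_n(u)\bigl(\psi_t(a)-\psi_{t-u}(a)\bigr)\,du ,
\]
and the usual splitting of this integral into the regions $\abs{u}_\infty<\delta$ and $\abs{u}_\infty\ge\delta$ --- controlled respectively by the modulus of continuity of the orbit map and by the mass escape of $K_n$ --- yields norm convergence, in fact uniformly in $t$. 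The implication (ii)$\Rightarrow$(iii) is then immediate: evaluate (ii) at $t=0$ and use $\psi_0=\id$.

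For (iii)$\Rightarrow$(i) the crucial computation is that each Fourier coefficient is an eigenvector of the group. From \eqref{eq:15} and the group law $\psi_s\psi_t=\psi_{s+t}$ one gets $\psi_s(\hat a(k))=\cexp[k\cdot s]\,\hat a(k)$, so $\norm{\psi_s(\hat a(k))-\hat a(k)}_\mA=\abs{\cexp[k\cdot s]-1}\,\norm{\hat a(k)}_\mA\to0$ as $s\to0$; hence $\hat a(k)\in C(\mA)$ for every $k$. Consequently every $C1$-mean $\sum_{\abs{k}_\infty\le n}\prod_{j=1}^d(1-\abs{k_j}/(n+1))\hat a(k)$ is a finite linear combination of continuous elements and lies in $C(\mA)$. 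Since $C(\mA)$ is norm-closed by Proposition~\ref{prop:autom-groups-cont-IC} and these means converge in norm to $a$ by hypothesis (iii), we conclude $a\in C(\mA)$, closing the cycle.

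The step that needs the most care is the vector-valued integration theory, since $\Psi$ need not be strongly continuous on all of $\mA$: one must justify that $\hat a(k)$ and the convolution identity are meaningful \emph{before} $a$ is known to be continuous. On $C(\mA)$ the orbit map is continuous, so $\hat a(k)$ is a genuine $\mA$-valued (Bochner) integral and the interchange of sum and integral above is legitimate. For the direction (iii)$\Rightarrow$(i), where $a$ is a priori only in $\mA$, I would define $\hat a(k)$ weakly through the scalar functions $G_{a',a}(t)=\inprod{a',\psi_t(a)}$ of Lemma~\ref{lemma:weakdeff}, setting $\inprod{a',\hat a(k)}=\widehat{G_{a',a}}(k)$; the bounded measurable function $G_{a',a}$ has well-defined Fourier coefficients, and the eigenvector identity then holds after testing against $a'$. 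This reduction to the scalar Fejér theorem is precisely where the one-dimensional harmonic analysis is imported, and it is also the point at which the uniform bound $M_\Psi<\infty$ is genuinely used, to keep all the scalar functions $G_{a',a}$ uniformly bounded.
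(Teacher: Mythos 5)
Your proof is correct and is essentially the standard Fej\'er-summability argument that the paper does not reproduce but delegates to DeLeeuw and Katznelson: your three steps (the product Fej\'er kernel as a positive approximate identity acting on the uniformly continuous orbit map, evaluation at $t=0$, and the eigenvector identity $\psi_s(\hat a(k))=e^{2\pi i k\cdot s}\hat a(k)$ combined with the norm-closedness of $C(\mA)$ from Proposition~\ref{prop:autom-groups-cont-IC}) are exactly that argument transplanted to the algebra-valued setting. Your closing caveat about defining $\hat a(k)$ weakly when $a$ is not yet known to lie in $C(\mA)$ is the right technical point to flag and is resolved the same way in the cited sources.
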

DeLeeuw considers only the algebra \bop, but the proof for general \mA\ is identical. See also
\cite[2.12]{Katznelson}. 
An immediate consequence of Proposition~\ref{prop_DeLeeuw} is a Weierstrass-type density theorem for periodic
group actions.
\begin{cor}\label{Cor_perWeierstrass} \hspace{1cm}
  \begin{enumerate}
  \item [(i)] The set of \BL\ elements is dense in $C(\mA)$.
  \item [(ii)] $C^k(\mA )$ is dense in $C(\mA )$.
  \item [(iii)] An element $a \in \mA$ is $\sigma$-bandlimited, if and only if $\psi_t(a)$ is the trigonometric
    polynomial of the form
    \begin{equation}
      \label{eq:17}
      \psi_t( a ) = \sum_{\abs{k}_\infty \le \sigma}\hat a(k) e^{2 \pi i k\cdot t}
    \end{equation}
  \end{enumerate}
\end{cor}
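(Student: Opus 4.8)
The plan is to reduce all three assertions to the behaviour of the Fourier coefficients $\hat a(k)$ under the group action, and then to feed the outcome into the Cesàro summability supplied by Proposition~\ref{prop_DeLeeuw}. The computational heart is an eigen-relation for the coefficients: from $\hat a(k) = \int_{\bbTd}\psi_t(a)\,e^{-2\pi i k\cdot t}\,dt$ and the periodicity (hence translation invariance of the Haar integral on $\bbTd$), a change of variables gives
\begin{equation*}
  \psi_s\bigl(\hat a(k)\bigr) = e^{2\pi i k\cdot s}\,\hat a(k), \qquad s\in\bbRd .
\end{equation*}
Differentiating in $s$ at $s=0$ yields $\delta^\alpha(\hat a(k)) = (2\pi i)^{\abs\alpha} k^\alpha\,\hat a(k)$ for every multi-index $\alpha$. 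Two consequences will be used repeatedly: each $\hat a(k)$ is continuous, since $\norm{\psi_s(\hat a(k))-\hat a(k)}_\mA = \abs{e^{2\pi i k\cdot s}-1}\,\norm{\hat a(k)}_\mA\to 0$; and, because $\abs{k^\alpha}\le\abs{k}_\infty^{\abs\alpha}$, the estimate $\norm{\delta^\alpha(\hat a(k))}_\mA\le \norm{\hat a(k)}_\mA(2\pi\abs{k}_\infty)^{\abs\alpha}$ shows that $\hat a(k)$ is $\abs{k}_\infty$-bandlimited and lies in $C^\infty(\mA)$.

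For (i) and (ii) I would introduce the $C1$-means $b_n=\sum_{\abs{k}_\infty\le n}\prod_{j=1}^d(1-\abs{k_j}/(n+1))\,\hat a(k)$. Being a finite linear combination of the $\hat a(k)$, each $b_n$ is bandlimited and lies in $C^\infty(\mA)\subseteq C^k(\mA)$. Proposition~\ref{prop_DeLeeuw}(iii) gives $b_n\to a$ in $\mA$ for every $a\in C(\mA)$, so the bandlimited elements, and a fortiori $C^k(\mA)$ and $C^\infty(\mA)$, are dense in $C(\mA)$. This settles both parts at once.

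The substance is in (iii). The direction $(\Leftarrow)$ is routine: if $\psi_t(a)=\sum_{\abs{k}_\infty\le\sigma}\hat a(k)e^{2\pi i k\cdot t}$, then evaluating at $t=0$ exhibits $a$ as a finite sum of $\sigma$-bandlimited elements, hence $\sigma$-bandlimited, the Bernstein bound being stable under finite sums. For $(\Rightarrow)$, I first note that a $\sigma$-bandlimited $a$ satisfies $a\in\mD(\delta_{e_j})$ for all $j$, so $a\in\mA^{(1)}\subseteq C(\mA)$ by Proposition~\ref{prop:density}(iii) and the coefficients $\hat a(k)$ are defined. I would then invoke the preceding spectral characterization, namely that $\sigma$-bandlimitedness is equivalent to $\spec(a)\subseteq[-\sigma,\sigma]^d$. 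Since the action is periodic, each $G_{a',a}$ is a $1$-periodic scalar function whose distributional Fourier transform is a sum of point masses at the integer lattice, the mass at $k$ being $\langle a',\hat a(k)\rangle$; hence $\supp\mF(G_{a',a})\subseteq[-\sigma,\sigma]^d$ forces $\langle a',\hat a(k)\rangle=0$ whenever $\abs{k}_\infty>\sigma$, for every $a'\in\mA'$, and therefore $\hat a(k)=0$ for $\abs{k}_\infty>\sigma$ by Hahn--Banach. With only finitely many nonzero coefficients the weights $\prod_j(1-\abs{k_j}/(n+1))$ in the Fejér means of Proposition~\ref{prop_DeLeeuw}(ii) tend to $1$ over the finite support as $n\to\infty$, so the means converge simultaneously to $\psi_t(a)$ and to $\sum_{\abs{k}_\infty\le\sigma}\hat a(k)e^{2\pi i k\cdot t}$, which must then coincide.

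I expect the one genuine obstacle to be this last implication, namely converting the analytic Bernstein bound $\norm{\delta^\alpha(a)}_\mA\le C(2\pi\sigma)^{\abs\alpha}$ into the vanishing of the high-frequency coefficients. All operator-valued bookkeeping can be pushed down to the scalar functions $G_{a',a}$, where the statement becomes the classical fact that a periodic function of exponential type $\sigma$ has Fourier support in $[-\sigma,\sigma]^d$; this is exactly what the spectral-characterization proposition already encodes through Paley--Wiener--Schwartz. The only remaining care is the periodicity bookkeeping identifying $\widehat{G_{a',a}}(k)$ with $\langle a',\hat a(k)\rangle$, together with the observation that a bandlimited element is automatically continuous. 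Everything else is a direct application of Proposition~\ref{prop_DeLeeuw}.
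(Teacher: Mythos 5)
Your proposal is correct and follows essentially the route the paper intends: the paper offers no written proof, calling the corollary an "immediate consequence" of Proposition~\ref{prop_DeLeeuw}, and your reconstruction — the eigen-relation $\psi_s(\hat a(k))=e^{2\pi i k\cdot s}\hat a(k)$, hence each $\hat a(k)$ is $\abs{k}_\infty$-bandlimited and continuous, the C1/Fej\'er means settling (i) and (ii), and the spectral (Paley--Wiener) characterization forcing $\hat a(k)=0$ for $\abs{k}_\infty>\sigma$ in (iii) — is exactly the combination of Proposition~\ref{prop_DeLeeuw} with the preceding spectral characterization that the paper has set up. All the details you supply (continuity of $a$ via $\mA^{(1)}\subseteq C(\mA)$, the identification $\widehat{G_{a',a}}(k)=\langle a',\hat a(k)\rangle$, Hahn--Banach) check out.
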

We single out a characterization of \BL\ elements of \MA s.
\begin{cor}
  A matrix $A$ is banded with bandwidth $N$ in the \MA\ \mA, if and only it is $N$-bandlimited with
  respect to the group action $\{\chi _t\}$.
\end{cor}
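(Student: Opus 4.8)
The plan is to derive both implications from the explicit description of $N$-bandlimited elements furnished by Corollary~\ref{Cor_perWeierstrass}(iii), together with the crucial fact (due to Baskakov, recalled just before Proposition~\ref{prop_DeLeeuw}) that for the periodic group action $\{\chi_t\}$ the $k$-th Fourier coefficient $\int_{\bbTd}\chi_t(A)e^{-2\pi i k\cdot t}\,dt$ is precisely the $k$-th side diagonal $\hat A(k)$. Under this dictionary, bandedness of $A$ (only finitely many nonzero side diagonals) and bandlimitedness of $A$ (a finite operator-valued Fourier series) are two readings of the same condition, so the corollary should be essentially immediate once the two indexings are matched.

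For the direction ``bandlimited $\Rightarrow$ banded'' I would invoke Corollary~\ref{Cor_perWeierstrass}(iii) directly: if $A$ is $N$-bandlimited, then $\chi_t(A)=\sum_{\abs{k}_\infty\le N}\hat A(k)e^{2\pi i k\cdot t}$ as in \eqref{eq:17}. Since the Fourier coefficients of $t\mapsto\chi_t(A)$ are exactly the side diagonals $\hat A(k)$, uniqueness of Fourier coefficients forces $\hat A(k)=0$ whenever $\abs{k}_\infty>N$; equivalently $A(k,l)=0$ for $\abs{k-l}_\infty>N$, i.e. $A$ is banded.

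For ``banded $\Rightarrow$ bandlimited'' I would verify the generalized Bernstein inequality \eqref{eq:20} by a direct computation on side diagonals. Since $\delta^\alpha(A)(k,l)=(2\pi i)^{\abs{\alpha}}\prod_{j}(k_j-l_j)^{\alpha_j}A(k,l)$, the operator $\delta^\alpha$ acts on the $m$-th side diagonal as multiplication by the scalar $\prod_j(2\pi i m_j)^{\alpha_j}$, whence $\delta^\alpha(A)=\sum_{\abs{m}\le N}\prod_j(2\pi i m_j)^{\alpha_j}\hat A(m)$ (a finite sum of the finitely many side diagonals, each lying in \mA). Bounding $\prod_j\abs{2\pi m_j}^{\alpha_j}\le(2\pi\abs{m}_\infty)^{\abs{\alpha}}\le(2\pi N)^{\abs{\alpha}}$ and taking norms gives $\norm{\delta^\alpha(A)}_\mA\le(2\pi N)^{\abs{\alpha}}\sum_{\abs{m}\le N}\norm{\hat A(m)}_\mA$, which is exactly \eqref{eq:20} with the finite constant $C=\sum_{\abs{m}\le N}\norm{\hat A(m)}_\mA$. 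Alternatively, this direction too follows at once from the equivalence in Corollary~\ref{Cor_perWeierstrass}(iii).

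The one point demanding care---and the main ``obstacle''---is the bookkeeping between the two notions of width. Bandedness was defined through the $1$-norm ($A=\sum_{\abs{m}\le N}\hat A(m)$), whereas $N$-bandlimitedness corresponds to the cube $[-N,N]^d$, i.e. to the sup-norm condition $\abs{m}_\infty\le N$. In dimension $d=1$ these coincide and the equivalence holds verbatim; in general $\abs{m}_\infty\le\abs{m}$, so ``banded of $1$-norm width $N$'' implies ``$N$-bandlimited'', while the converse yields sup-norm width $N$ (equivalently $1$-norm width at most $dN$). I would therefore phrase and prove the equivalence with the width measured in the sup-norm, under which the two conditions match exactly, and leave the (bounded) comparison of norms to the reader.
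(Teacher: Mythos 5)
Your argument is correct and is essentially the paper's (implicit) proof: the corollary is stated immediately after Corollary~\ref{Cor_perWeierstrass} precisely because part (iii) of that corollary, combined with Baskakov's identification of the Fourier coefficients of $t\mapsto\chi_t(A)$ with the side diagonals $\hat A(k)$, gives both directions at once, and your direct verification of the Bernstein inequality \eqref{eq:20} for banded matrices is a harmless additional check. Your observation that bandedness is defined in \eqref{eq:bandmatrix} via the $1$-norm condition $\abs{m}\le N$ while $N$-bandlimitedness corresponds to the cube $[-N,N]^d$, i.e.\ $\abs{k}_\infty\le N$, is a fair catch for $d>1$ which the paper glosses over, and your resolution (measure the bandwidth in the sup-norm, the two notions differing only by the bounded factor $d$) is the right one.
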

 
\subsection{Characterization of Smoothness by Approximation}
\label{sec:char-smoothn-appr}
When working with an automorphism group on $\mA $, then the sequence of subspaces of bandlimited elements of
given bandwidth provides a natural approximation scheme for $\mA $. For this case, we will show that the
smoothness spaces defined in Section~\ref{sec:smooth} are equivalent to approximation spaces. In other words,
we will state and prove a general version of the Jackson-Bernstein theorem. 
Although proofs are similar to the classical ones in \cite{DeVore93,Timan63,Trigub04}, we gain new insight from
the generalization to \BA s. In particular, we need 
a theory of smoothness based on the action of an automorphism group (Section~\ref{sec:smooth}), and a spectral
characterization of \BL\ elements (Section~\ref{sec:spectr-subsp-band}).
Related results were obtained independently in~\cite{grushka07,Torba08}.
  \begin{lem}
    Let $\mA$ be a \BA\ with automorphism group $\Psi$, and set
    \begin{equation}
      \label{eq:29}
      X_0=\set{0}, \quad X_\sigma=\set{a \in \mA \colon \spec(a)
        \subseteq [-\sigma,\sigma]^d}, \quad \sigma>0. 
    \end{equation}
    Then $\{X_\sigma : \sigma \geq 0\}$ is an approximation scheme for $\mA $ consisting of the \BL\ elements.
  \end{lem}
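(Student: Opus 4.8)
The plan is to verify directly each defining property of an approximation scheme, \eqref{eq:2}--\eqref{eq:26}, reading off from the spectral characterization of $\sigma$-\BL\ elements that the $X_\sigma$ are exactly the bandlimited elements of bandwidth at most $\sigma$. First note that $\spec(a)$ is only defined for $a \in C(\mA)$, so by construction $X_\sigma \subseteq C(\mA)$, and all subsequent manipulations take place inside the closed subalgebra $C(\mA)$ of Proposition~\ref{prop:autom-groups-cont-IC}. The monotonicity $X_\sigma \subseteq X_\tau$ for $\sigma \le \tau$ is immediate, since $[-\sigma,\sigma]^d \subseteq [-\tau,\tau]^d$ forces $\spec(a)\subseteq[-\sigma,\sigma]^d$ to lie in $[-\tau,\tau]^d$.

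Next I would check that each $X_\sigma$ is a closed subspace. Linearity of $a \mapsto G_{a',a}$ gives $\mF(G_{a',\lambda a + \mu b}) = \lambda \mF(G_{a',a}) + \mu \mF(G_{a',b})$, so the support of the left side lies in the union of the supports on the right; taking the union over $a' \in \mA'$ yields $\spec(\lambda a + \mu b) \subseteq \spec(a) \cup \spec(b)$, and $X_\sigma$ is a subspace. For closedness, if $a_n \in X_\sigma$ and $a_n \to a$ in \mA, then $\norm{G_{a',a_n} - G_{a',a}}_\infty \le M_\Psi \norm{a'}_{\mA'}\norm{a_n - a}_\mA \to 0$ for each fixed $a'$. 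Uniform convergence of these bounded functions implies convergence in $\mathcal{S}'$, hence $\mF(G_{a',a_n}) \to \mF(G_{a',a})$ in $\mathcal{S}'$; since every approximant is supported in the closed cube $[-\sigma,\sigma]^d$, so is the limit, whence $\spec(a)\subseteq[-\sigma,\sigma]^d$ and $a\in X_\sigma$.

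The heart of the proof is the sub-multiplicativity \eqref{eq:6}, and the cleanest route is to abandon the spectral picture in favour of the Bernstein inequality \eqref{eq:20}. Given $a \in X_\sigma$ and $b \in X_\tau$, the spectral characterization provides constants with $\norm{\delta^\beta(a)}_\mA \le C_a (2\pi\sigma)^{\abs\beta}$ and $\norm{\delta^\gamma(b)}_\mA \le C_b(2\pi\tau)^{\abs\gamma}$ for all multi-indices. Applying the general Leibniz rule \eqref{leib} and taking norms gives
\[
\norm{\delta^\alpha(ab)}_\mA \le \sum_{\beta \le \alpha}\binom{\alpha}{\beta}\norm{\delta^\beta(a)}_\mA\norm{\delta^{\alpha-\beta}(b)}_\mA \le C_a C_b \sum_{\beta\le\alpha}\binom{\alpha}{\beta}(2\pi\sigma)^{\abs\beta}(2\pi\tau)^{\abs{\alpha-\beta}}.
\]
Because $\binom{\alpha}{\beta}=\prod_j\binom{\alpha_j}{\beta_j}$, the last sum factors over coordinates into $\prod_{j=1}^d(2\pi\sigma+2\pi\tau)^{\alpha_j} = (2\pi(\sigma+\tau))^{\abs\alpha}$, so $\norm{\delta^\alpha(ab)}_\mA \le C_aC_b(2\pi(\sigma+\tau))^{\abs\alpha}$. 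Thus $ab$ satisfies \eqref{eq:20} with bandwidth $\sigma+\tau$, i.e.\ $ab \in X_{\sigma+\tau}$, which is exactly \eqref{eq:6}. I expect this binomial bookkeeping to be the only genuinely computational step, and it is the one I would flag as the main point, since it is here that the additivity of bandwidths is made precise; trying to establish it by direct manipulation of the distributional spectra would be the hard way.

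Finally I would dispatch the involution conditions \eqref{eq:26}. Since automorphisms fix the unit, $\psi_t(\one)=\one$ for all $t$, so $\delta^\alpha(\one)=0$ for $\abs\alpha\ge1$ and $\one$ trivially satisfies \eqref{eq:20} for every bandwidth; hence $\spec(\one)=\set{0}\subseteq[-1,1]^d$ and $\one\in X_1$. For $X_\sigma=X_\sigma^*$ I would use that $\Psi$ acts by $*$-automorphisms: writing $\langle (a')^*,x\rangle = \overline{\langle a',x^*\rangle}$, which is a bounded functional because the involution is continuous, one gets $G_{a',a^*}(t)=\langle a',\psi_t(a)^*\rangle = \overline{G_{(a')^*,a}(t)}$. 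Since $\mF(\overline g)(\omega)=\overline{\mF(g)(-\omega)}$, the support of $\mF(G_{a',a^*})$ is the reflection of $\supp\mF(G_{(a')^*,a})\subseteq[-\sigma,\sigma]^d$; as this cube is symmetric, $\spec(a^*)\subseteq[-\sigma,\sigma]^d$ and $a^*\in X_\sigma$. Together with $X_0=\set{0}$, which is built into the definition, this verifies all the axioms, and the spectral characterization identifies the $X_\sigma$ as the \BL\ elements, as claimed.
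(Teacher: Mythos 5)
Your proof is correct and its core argument is exactly the paper's: the sub-multiplicativity $X_\sigma X_\tau \subseteq X_{\sigma+\tau}$ is obtained by taking norms in the general Leibniz rule \eqref{leib} and inserting the Bernstein estimates $\norm{\delta^\beta(a)}_\mA \leq C_a(2\pi\sigma)^{\abs{\beta}}$, $\norm{\delta^\gamma(b)}_\mA \leq C_b(2\pi\tau)^{\abs{\gamma}}$, with the binomial sum collapsing to $(2\pi(\sigma+\tau))^{\abs{\alpha}}$. The only differences are cosmetic: the paper treats the periodic case separately via Corollary~\ref{Cor_perWeierstrass}(iii) and leaves the remaining axioms (closedness, $\one \in X_1$, $X_\sigma = X_\sigma^*$) as routine, whereas you verify them explicitly and correctly.
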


  \begin{proof}
    If the group action is periodic, then
    Corollary~\ref{Cor_perWeierstrass}(iii) implies directly that
    $X_\sigma X_\tau     \subseteq X_{\sigma +\tau}$.
    If the acting group is $\bbRd$, then
we take norms in  the  Leibniz rule~\eqref{leib}  and  substitute the
estimates $ \norm{\delta^\alpha( a)}_\mA \leq C_a ( 2 \pi \sigma)^{\abs
  \alpha}$ and   $\norm{\delta^\alpha( b)}_\mA \leq C_b ( 2 \pi
\tau)^{\abs \alpha}$. The resulting   estimate is 
\begin{equation*}
\norm{\delta^\alpha(ab)}_\mA \leq \sum_{\beta \leq\alpha } \binom{
  \alpha}{\beta} C_a C_b (2 \pi \sigma)^{|\beta|} (2 \pi \tau)^{
  |\alpha-\beta| }=C_aC_b (2 \pi (\sigma+\tau))^{\abs \alpha}\, ,
\end{equation*}%
therefore  $ab$ is $\sigma + \tau$-\BL.
  \end{proof}
  Next we formulate a theorem of Jackson-Bernstein type for \BA
  s. 
  \begin{thm}\label{prop:jacksonbernstein}
    Let $\mA$ be a \BA\ with automorphism group $\Psi$, and $\{X_\sigma : \sigma \geq 0\}$ be the approximation
    scheme of bandlimited elements.  Then, for $r >0$,
    \begin{equation}
      \label{eq:cha1}
      \Lambda _r (\mA ) = \mE ^\infty _r (\mA ) \, .
    \end{equation}
    In other words, $a \in \Lambda_r(\mA)$, if and only if $ E_\sigma(a) \leq C \sigma^{-r}$ for all $ \sigma>0$.
  \end{thm}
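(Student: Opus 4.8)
The plan is to reduce both inclusions to the classical scalar Jackson--Bernstein theorems on $\bbRd$ (or on $\bbTd$ in the periodic case) via the weak characterization of the \HZ\ norm from Lemma~\ref{lemma:weakdeff}(i),
\[
\norm{a}_{\Lambda_r(\mA)} \asymp \sup_{\norm{a'}_{\mA'}\leq 1}\norm{G_{a',a}}_{\Lambda_r(\bbRd)},
\]
combined with the spectral description of the approximation scheme. The key bridge is that $x \in X_\sigma$, i.e.\ $\spec(x)\subseteq[-\sigma,\sigma]^d$, means precisely that for \emph{every} $a'$ the bounded scalar function $G_{a',x}$ has $\supp\mF(G_{a',x})\subseteq[-\sigma,\sigma]^d$, hence (by Paley--Wiener--Schwartz, exactly as in the proof of the spectral characterization of \BL\ elements) extends to an entire function of exponential type $\sigma$. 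Both $a \in \mE^\infty_r(\mA)\subseteq C(\mA)$ and $a\in\Lambda_r(\mA)\subseteq C(\mA)$ guarantee that $G_{a',a}$ and $\spec(a)$ are defined, so the reduction is legitimate.

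For the Bernstein direction $\mE^\infty_r(\mA)\subseteq\Lambda_r(\mA)$ I would fix $a$ with $E_\sigma(a)\le C\sigma^{-r}$ and an arbitrary $a'$ with $\norm{a'}_{\mA'}\le1$. For every $x\in X_\sigma$ the competitor $G_{a',x}$ is entire of exponential type $\sigma$ and
\[
\norm{G_{a',a}-G_{a',x}}_\infty=\norm{G_{a',a-x}}_\mA\le M_\Psi\,\norm{a-x}_\mA,
\]
so the scalar best approximation of $G_{a',a}$ by functions of exponential type $\sigma$ is at most $M_\Psi E_\sigma(a)\le CM_\Psi\sigma^{-r}$, uniformly in $a'$. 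The quantitative scalar Bernstein theorem then yields $\norm{G_{a',a}}_{\Lambda_r(\bbRd)}\le C'$ with $C'$ depending only on $r,d$ and on $\norm{a}_{\mE^\infty_r}$; taking the supremum over $\norm{a'}\le1$ and invoking the weak characterization gives $a\in\Lambda_r(\mA)$.

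For the Jackson direction $\Lambda_r(\mA)\subseteq\mE^\infty_r(\mA)$ I cannot apply the scalar theorem pointwise in $a'$, since I must exhibit a \emph{single} element of $X_\sigma$ approximating $a$ in $\mA$. Instead I would use a fixed Jackson-type convolution operator: pick a kernel $K$ on $\bbRd$ with $\hat K$ supported in $[-1,1]^d$, equal to $1$ near the origin and reproducing polynomials up to degree exceeding $r$, put $K_\sigma(t)=\sigma^dK(\sigma t)$, and define the Bochner integral $S_\sigma a=\int_{\bbRd}K_\sigma(t)\psi_t(a)\,dt\in\mA$, which is legitimate since $t\mapsto\psi_t(a)$ is continuous for $a\in C(\mA)$ and $K_\sigma\in L^1$. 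Because $\mF(G_{a',S_\sigma a})=\hat K_\sigma\cdot\mF(G_{a',a})$ is supported in $[-\sigma,\sigma]^d$ for every $a'$, we get $\spec(S_\sigma a)\subseteq[-\sigma,\sigma]^d$, so $S_\sigma a\in X_\sigma$. Since $G_{a',a-S_\sigma a}=G_{a',a}-K_\sigma*G_{a',a}$, the classical Jackson estimate for this kernel gives $\norm{G_{a',a}-K_\sigma*G_{a',a}}_\infty\le C\sigma^{-r}\norm{G_{a',a}}_{\Lambda_r(\bbRd)}$; taking the supremum over $\norm{a'}\le1$ and using the weak characterization yields $E_\sigma(a)\le\norm{a-S_\sigma a}_\mA\le C\sigma^{-r}\norm{a}_{\Lambda_r(\mA)}$ (the estimate for bounded $\sigma$ being trivial from $E_\sigma\le\norm{a}_\mA$), whence $a\in\mE^\infty_r(\mA)$.

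The main obstacle is the Jackson direction, precisely the uniformity in $a'$: one convolution operator $S_\sigma$ must approximate all the scalar functions $G_{a',a}$ simultaneously, which is exactly why a Fourier-multiplier construction is used rather than an abstract best approximation, and why the spectral-support computation is needed to certify $S_\sigma a\in X_\sigma$. The remaining care is choosing $K$ with enough vanishing moments to achieve the rate $\sigma^{-r}$ for \emph{arbitrary} $r>0$. In the periodic case the identical scheme applies verbatim with $\bbTd$, trigonometric-polynomial approximation, and a de~la~Vall\'ee~Poussin/Jackson kernel (via Proposition~\ref{prop_DeLeeuw}) replacing the exponential-type machinery.
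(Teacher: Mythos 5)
Your Jackson half is essentially the paper's argument: the paper also approximates $a$ by $\kappa_{1/\sigma}*a$ with $\kappa\in S(\bbRd)$, $\supp \mF\kappa\subseteq[-1,1]^d$ and vanishing moments (Lemma~\ref{lem:smoothappr} and Proposition~\ref{prop:jackson1}), and certifies that the approximant is $\sigma$-\BL\ by the same spectral-support computation (Lemma~\ref{lem:convspec}). The only cosmetic difference is that the paper estimates $\norm{a-\kappa_{1/\sigma}*a}_\mA$ directly via an $\mA$-valued Taylor formula and the modulus of smoothness, rather than transporting the scalar Jackson estimate through the functions $G_{a',a}$; both are legitimate.

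Your Bernstein half takes a genuinely different route. The paper (Proposition~\ref{p:ch46}) runs the dyadic telescoping $a=a_0+\sum_k(a_{k+1}-a_k)$ inside $\mA$ and uses the abstract Bernstein inequality $\norm{\Delta_t b}_\mA\leq C\sigma\abs{t}\norm{b}_\mA$ for $\sigma$-\BL\ $b$ (Lemma~\ref{lem:ch45}); you instead reduce to the classical scalar inverse theorem applied to each $G_{a',a}$, with uniformity in $a'$. For $0<r<1$ this is correct and arguably cleaner, since the competitors $G_{a',x}$, $x\in X_\sigma$, are indeed bounded entire functions of exponential type $\sigma$ and $\norm{G_{a',a-x}}_\infty\leq M_\Psi\norm{a-x}_\mA$. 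For $r\geq 1$, however, there is a gap: Lemma~\ref{lemma:weakdeff}(i) is established as a norm equivalence for elements already known to lie in $C^k(\mA)$ --- its proof uses $\inprod{a',\delta^\alpha(a)}=D^\alpha G_{a',a}\vert_{t=0}$, which presupposes that $\delta^\alpha(a)$ exists in $\mA$. Uniform bounds on $\norm{G_{a',a}}_{\Lambda_r(\bbRd)}$ only give weak differentiability of $t\mapsto\psi_t(a)$, so you must still produce $\delta^\alpha(a)$ as a strong limit before the weak characterization can certify $a\in\Lambda_r(\mA)$. This is repairable --- either by the weak-generator-equals-strong-generator theorem for the $C_0$-group restricted to $C(\mA)$, or, as the paper does, by observing that $\sum_k\delta^\alpha(b_k)$ converges in $\mA$ and must equal $\delta^\alpha(a)$ because the generators are closed --- but as written the step ``taking the supremum over $\norm{a'}\leq 1$ gives $a\in\Lambda_r(\mA)$'' is not justified for $r\geq 1$.
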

  We will split the proof into several statements. 
  One of the main tools will be smooth approximating units in $\mA$, which we will review next.

  \label{prop:lonemod}
  Given $\mu \in \mM(\bbRd)$ and $a \in C(\mA)$, the \emph{action of} $\mu$ on $a$ is defined by
  \begin{equation}\label{eq:module}
    \mu * a = \int_{\bbRd}\psi_{-t}(a) d\mu(t). 
  \end{equation}
  This action is a generalization of the usual convolution and satisfies similar properties.  See
  {\cite{Butzer67}} for details and
  proofs.

If the group action is periodic with period one, 
the action of $\mu$ on $a$ is
\begin{equation}
  \label{eq:16}
  \mu * a = \int_{\bbTd} \psi_{-t}(a) \, d\mu(t)=  \sum_{k \in \bbZd} \mF(f)(k) \hat a(k),
\end{equation}
where  $\hat a (k)$ is the $k$-th Fourier coefficient of $a$ and the sum converges in  the C1-sense as in Proposition~\ref{prop_DeLeeuw}.\\
  (i) If $ a \in C(\mA), \mu \in \mM(\bbRd)$, then
  \begin{equation}
    \label{eq:4}
    \norm{\mu * a}_\mA \leq M_\Psi 
    \, \norm{\mu}_{M(\bbRd)}\, \norm{a}_\mA  \, .
  \end{equation} 
  (ii) If $f \in C_c^\infty(\bbR^d)$ and $a \in C(\mA)$, then
  \begin{equation}
    \label{eq:3}
    \delta^\alpha(f * a)=D^\alpha f *a \in C(\mA)
  \end{equation}
  for every multi-index $\alpha$.  In particular, $f * a \in C^\infty(\mA)$.
\\
  (iii) Taylor's formula: if $a \in C^k(\mA)$, then
  \begin{equation}
    \label{eq:taylor}
    \psi_t(a)=\sum_{\abs \alpha \leq k} \frac{\delta^\alpha(a)}{\alpha !} t^\alpha + \frac{R_k(t)}{k!}\abs{t}^k   ,
  \end{equation}
  where the remainder term $R_k(t)$ is bounded by the modulus of continuity
  \begin{equation}
    \label{eq:remainder}
    \abs{R_k(t)} \leq C \max_{\abs {\alpha}=k}\omega_{|t|}(\delta^\alpha a).
  \end{equation}
Taylor's formula for $\psi_t(a)$ and the estimation of the remainder follow from the Taylor expansion of $G_{a',a}(t)=\inprod{a',\psi_t(a)}$ (see, e.g.~\cite[3.2]{krantz83} for the scalar case).

  For the construction of approximating units let $f_\rho (x)= \rho^{-d} f(\rho\inv x)$, $\rho >0$, be the dilation
  of $f\in L^1(\bbRd)$. Then
  \begin{equation}
    \label{eq:apprunit}
    f_\rho * a = \int_{\bbR^n}{\psi_{-\rho u}(a) f(u) \, du}\,.
  \end{equation}

  \begin{lem}\label{lem:smoothappr}
    Let \mA\ be a \BA,  $a \in C(\mA)$, and  $\kappa \in L^1(\bbRd)$ with  $\int_{\bbRd}\kappa(x) \, dx=1$. 
    \begin{enumerate}
   \item [(i)] If $\kappa \in L^1_{v_1}(\bbRd)$, then
    \begin{equation}
      \label{eq:modcontapp}
      \norm{a-\kappa_\rho *a}_\mA \le C \omega_\rho(a).
    \end{equation}
    \item [(ii)] If $\kappa \in L^1_{v_{k+1}}(\bbRd)$, and if $\int_{\bbRd}{t^\alpha \kappa(t) \, dt}=0$ for $1 \le \abs {\alpha}
    \le  k$, $k \in \bbN$, then for every  $a \in C^{ k}(\mA)$
    \begin{equation}
      \label{eq:smoothapp}
      \norm{a-\kappa_\rho *a}_\mA \le C \rho^{ k } \max_{\abs \beta
        =k}\omega_\rho (\delta^\beta (a)) \, .
    \end{equation}
  \end{enumerate}
\end{lem}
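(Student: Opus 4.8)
The plan is to handle both parts by the same mollifier computation, using the normalization $\int_{\bbRd}\kappa\,du = 1$ to rewrite the error as a single vector-valued integral. From the representation \eqref{eq:apprunit} and $a = \int_{\bbRd} a\,\kappa(u)\,du$, I would write
\[
a - \kappa_\rho * a = \int_{\bbRd}\bigl(a - \psi_{-\rho u}(a)\bigr)\kappa(u)\,du .
\]
The one tool needed beyond this is a subadditivity estimate for the modulus of continuity adapted to a merely \emph{uniformly bounded} (not isometric) group: telescoping $\Delta_{ns}(a) = \sum_{j=0}^{n-1}\psi_{js}(\Delta_s a)$ and splitting a vector $t$ with $\abs t \le \rho\abs u$ as $t = n(t/n)$ with $n = \lceil\abs u\rceil$ (so that $\abs{t/n}\le\rho$) yields
\[
\omega_{\rho\abs u}(b) \le M_\Psi (1+\abs u)\,\omega_\rho(b), \qquad b \in C(\mA).
\]
This single inequality drives both estimates, and the weights $v_1$ and $v_{k+1}$ imposed on $\kappa$ are exactly what is needed to absorb the factor $(1+\abs u)$ it produces.

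For part (i), taking norms in the integral identity and inserting $\norm{a - \psi_{-\rho u}(a)}_\mA = \norm{\Delta_{-\rho u}(a)}_\mA \le \omega_{\rho\abs u}(a) \le M_\Psi(1+\abs u)\omega_\rho(a)$ gives
\[
\norm{a - \kappa_\rho * a}_\mA \le M_\Psi\,\omega_\rho(a)\int_{\bbRd}(1+\abs u)\abs{\kappa(u)}\,du = M_\Psi\,\norm{\kappa}_{L^1_{v_1}(\bbRd)}\,\omega_\rho(a),
\]
which is \eqref{eq:modcontapp}.

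For part (ii), I would replace this crude bound by Taylor's formula \eqref{eq:taylor} applied to $\psi_{-\rho u}(a)$. The order-zero term cancels $a$, so
\[
a - \psi_{-\rho u}(a) = -\sum_{1\le\abs\alpha\le k}\frac{\delta^\alpha(a)}{\alpha!}(-\rho)^{\abs\alpha} u^\alpha - \frac{\rho^k\abs{u}^k}{k!}R_k(-\rho u).
\]
Integrating against $\kappa$ and invoking the moment conditions $\int_{\bbRd} u^\alpha\kappa(u)\,du = 0$ for $1\le\abs\alpha\le k$ annihilates the entire polynomial sum, leaving
\[
a - \kappa_\rho * a = -\frac{\rho^k}{k!}\int_{\bbRd}\abs{u}^k R_k(-\rho u)\,\kappa(u)\,du .
\]
Now the remainder bound \eqref{eq:remainder}, $\norm{R_k(-\rho u)}_\mA \le C\max_{\abs\beta=k}\omega_{\rho\abs u}(\delta^\beta a)$, together with the same subadditivity inequality applied to $b = \delta^\beta(a)$, produces a factor $(1+\abs u)$; using $(1+\abs u)\abs{u}^k \le (1+\abs u)^{k+1}$ and integrating yields
\[
\norm{a - \kappa_\rho * a}_\mA \le C\,\rho^k\,\norm{\kappa}_{L^1_{v_{k+1}}(\bbRd)}\max_{\abs\beta=k}\omega_\rho(\delta^\beta a),
\]
which is \eqref{eq:smoothapp}.

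The remaining points are bookkeeping rather than conceptual. All integrals are Bochner integrals of the continuous, uniformly bounded map $t\mapsto\psi_t(a)$ against an $L^1$-weight, so commuting the norm and the moment integrals with the integral sign is legitimate and should be noted; Taylor's formula \eqref{eq:taylor} is exactly available under the hypothesis $a\in C^k(\mA)$ of (ii), and its remainder estimate is what converts smoothness into the rate $\rho^k$. I expect the main obstacle to be the clean statement of the subadditivity estimate $\omega_{\rho\abs u}(b)\le M_\Psi(1+\abs u)\omega_\rho(b)$, since this is the step where the loss of the $C^*$-normalization $M_\Psi = 1$ must be carried along and where the precise weighted integrability required of $\kappa$ is pinned down.
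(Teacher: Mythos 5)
Your proof is correct and follows essentially the same route as the paper: part (i) is the integral estimate combined with the subadditivity property $\omega_{\rho\abs u}(a)\le M_\Psi(1+\abs u)\omega_\rho(a)$ (the paper's \eqref{eq:38}, which it cites from the scalar case and you prove by telescoping), and part (ii) is Taylor's formula \eqref{eq:taylor} with the vanishing moments and the remainder bound \eqref{eq:remainder}, exactly as the paper indicates. The only difference is that you carry out the details the paper delegates to the references.
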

  \begin{proof} 
    The proof is similar to standard approximation results for $C(\bbT)$ or $C_u(\bbR^d)$.
    Part (i) follows from
    \begin{equation*}
      \norm{a-\kappa_\rho *a}_\mA \leq\int_{\bbR^n}\abs{\kappa(u)} \, \norm{a -\psi_{-\rho u}(a)}_\mA  \, du \leq 
      \int_{\bbR^n}{\abs{\kappa(u)} \, \omega_{\rho |u|}(a) \, du},
    \end{equation*}
    and the property
    \begin{equation}\label{eq:38}
      \omega_{\rho |u|}(a) \leq \sup_{t\in \bbRd } \norm{\psi_t} \,
      (1+\abs u ) \omega_\rho(a) \, ,
    \end{equation}
    which is proved as in the scalar case~\cite[1.2.1]{Trigub04}.  The proof of (ii) uses Taylors formula
    (\ref{eq:taylor}) in connection with the vanishing moment condition 
and the estimation of the
    remainder (\ref{eq:remainder}), see~\cite[1.2.6]{Trigub04}, \cite[3.3]{krantz83} for details.
  \end{proof}
  We need another property of the spectrum.
  \begin{lem}\label{lem:convspec}
    If $a \in C(\mA)$ and $f \in L^1(\bbRd)$, then
    \begin{equation}
      \label{eq:31}
      \spec (f *a) \subseteq \supp(\mF f) \cap \spec(a). 
    \end{equation}
  \end{lem}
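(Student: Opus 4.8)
The plan is to pull the whole statement back to the scalar functions $G_{a',a}$ and then invoke the classical Fourier analysis behind the Paley--Wiener circle of ideas. First I would record that $f*a=\int_{\bbRd}\psi_{-t}(a)\,f(t)\,dt$ is a genuine (Bochner) integral: since $a\in C(\mA)$ the orbit map $t\mapsto\psi_{-t}(a)$ is norm-continuous and bounded by $M_\Psi\norm{a}_\mA$, and the estimate $\norm{\psi_h(f*a)-f*a}_\mA\le M_\Psi\norm{f}_{L^1(\bbRd)}\norm{\psi_h(a)-a}_\mA$ shows $f*a\in C(\mA)$, so that $\spec(f*a)$ is defined. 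Because $\psi_s$ is a bounded automorphism and $a'\in\mA'$ a bounded functional, both may be moved inside the integral, giving for every $a'\in\mA'$ the pointwise identity
\begin{equation*}
  G_{a',f*a}(s)=\langle a',\psi_s(f*a)\rangle=\int_{\bbRd}\langle a',\psi_{s-t}(a)\rangle\,f(t)\,dt=(G_{a',a}*f)(s).
\end{equation*}
Thus each frequency function attached to $f*a$ is the convolution of the corresponding frequency function of $a$ with $f$.

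Morally, the conclusion is now just the convolution theorem: $\mF(G_{a',f*a})=\mF f\cdot\mF(G_{a',a})$, and multiplying a distribution by the continuous function $\mF f$ can only shrink its support, so that
\begin{equation*}
  \supp\mF(G_{a',f*a})\subseteq\supp(\mF f)\cap\supp\mF(G_{a',a})\subseteq\supp(\mF f)\cap\spec(a).
\end{equation*}
Taking the union over all $a'\in\mA'$ in the definition \eqref{eq:27} of the spectrum then yields $\spec(f*a)\subseteq\supp(\mF f)\cap\spec(a)$, which is the claim.

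The delicate point, and the step I expect to be the main obstacle, is that $G_{a',a}$ is only bounded and continuous, hence a tempered distribution that is \emph{not} integrable; consequently $\mF(G_{a',a})$ is a genuine distribution and its product with the merely continuous function $\mF f$ is a priori ill defined. I would therefore prove the two inclusions separately, avoiding that product. For the inclusion into $\spec(a)$, testing against $\phi\in S(\bbRd)$ and using Fubini gives the honest identity
\begin{equation*}
  \langle\mF(G_{a',a}*f),\phi\rangle=\int_{\bbRd}f(t)\,\langle\mF(G_{a',a}),\,e^{-2\pi i t\cdot}\phi\rangle\,dt,
\end{equation*}
so if $\supp\phi$ is disjoint from $\supp\mF(G_{a',a})$ then every inner pairing vanishes and $\langle\mF(G_{a',a}*f),\phi\rangle=0$. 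For the inclusion into $\supp(\mF f)$, I would use the adjoint rule for convolution, $\langle G_{a',a}*f,\mF\phi\rangle=\langle G_{a',a},\tilde f*\mF\phi\rangle$ with $\tilde f(x)=f(-x)$, and observe that for $\phi\in C_c^\infty(\bbRd)$ both $\tilde f$ and $\mF\phi$ lie in $L^1(\bbRd)$, so the \emph{elementary} $L^1$ convolution theorem applies and gives $\mF(\tilde f*\mF\phi)=\widetilde{\mF f\cdot\phi}$; when $\supp\phi\cap\supp\mF f=\emptyset$ this vanishes identically, forcing $\tilde f*\mF\phi=0$ and hence $\langle\mF(G_{a',a}*f),\phi\rangle=0$. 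Together these two inclusions establish the displayed support estimate rigorously, and the union over $a'$ completes the proof.
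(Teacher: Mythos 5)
Your proof is correct and follows essentially the same route as the paper: both hinge on the identity $G_{a',f*a}=f*G_{a',a}$ (the paper's (\ref{eq:46})) followed by the standard fact that convolution can only shrink Fourier supports, then take the union over $a'\in\mA'$. The only difference is that you carefully justify the support inclusion $\supp\mF(G_{a',a}*f)\subseteq\supp(\mF f)\cap\supp\mF(G_{a',a})$ for a bounded continuous $G_{a',a}$ by testing against Schwartz functions, a step the paper simply asserts; your two separate pairing arguments are both valid.
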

  \begin{proof}
    By definition $t \in \spec(f *a)$ means that there exists $a' \in \mA'$, such that $t \in \supp \mF G_{a', f
      * a}$.  An elementary calculation shows that
    \begin{equation}\label{eq:46}
      G_{a', f * a}= f*G_{a',  a}
    \end{equation}
    for all $a'\in\mA'$, $a \in C(\mA)$. So $\supp \mF G_{a', f * a} \subseteq \supp \mF f \cap \supp \mF G_{a',
      a}$, and the Lemma follows.
  \end{proof}

  With the existence of approximating kernels we can now state a Jackson-type theorem for automorphism groups.

  \begin{prop} \label{prop:jackson1} Let $a \in \mA$ and $\sigma>0$.
\begin{enumerate}
   \item [(i)] Then there is a $\sigma$-\BL\ element $a_\sigma \in C(\mA )$, such that
    \begin{equation*}
      \norm{a-a_\sigma}_\mA \leq C \omega_{1/\sigma}(a)
    \end{equation*}
    with $C$ independent of $\sigma$ and $a$.
   \item [(ii)] If $a \in C^k(\mA)$, then there exists a $\sigma$-\BL\ element $a_\sigma \in \mA$, such that
    \begin{equation*}
      \norm{a-a_\sigma} \leq C \sigma^{-\abs
        \alpha}\max_{\abs\alpha =k}\omega_{1/\sigma}(\delta^\alpha a)\, .
    \end{equation*}
  \end{enumerate}
\end{prop}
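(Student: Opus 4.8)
The plan is to produce $a_\sigma$ by convolving $a$ against a single smoothing kernel whose Fourier transform is compactly supported (so that $a_\sigma$ is automatically bandlimited) and identically $1$ near the origin (so that the kernel is normalized and has vanishing moments of \emph{all} orders at once). Concretely, I would fix $\phi \in C_c^\infty(\bbRd)$ with $\supp \phi \subseteq [-1,1]^d$ and $\phi \equiv 1$ on a neighborhood of the origin, and set $\kappa = \mF^{-1}\phi$. Since $\phi$ is smooth with compact support, $\kappa$ is a Schwartz function, so $\kappa \in L^1_{v_{m}}(\bbRd)$ for every $m$. Moreover $\int \kappa = \mF\kappa(0)=\phi(0)=1$, and differentiating $\mF\kappa = \phi$ at the origin gives $\int t^\alpha \kappa(t)\,dt = (-2\pi i)^{-\abs\alpha}D^\alpha\phi(0)=0$ for every $\alpha$ with $\abs\alpha \geq 1$, because $\phi$ is constant near $0$. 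Thus one and the same $\kappa$ meets all the integrability and moment hypotheses of Lemma~\ref{lem:smoothappr} simultaneously, for every $k$.

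With this kernel I would define $a_\sigma = \kappa_{1/\sigma} * a$, where $\kappa_\rho(x)=\rho^{-d}\kappa(\rho^{-1}x)$ and $\rho = 1/\sigma$, the action being the one in \eqref{eq:apprunit}; this is defined for $a \in C(\mA)$, which is the only case of interest, since $\omega_{1/\sigma}(a)$ carries no approximation content otherwise. The first point to verify is bandlimitedness: from $\mF(\kappa_{1/\sigma})(\omega)=\mF\kappa(\omega/\sigma)=\phi(\omega/\sigma)$ we see that $\supp \mF(\kappa_{1/\sigma}) \subseteq [-\sigma,\sigma]^d$, so Lemma~\ref{lem:convspec} yields $\spec(a_\sigma)\subseteq \supp \mF(\kappa_{1/\sigma}) \cap \spec(a) \subseteq [-\sigma,\sigma]^d$. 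By the spectral characterization of $\sigma$-\BL\ elements this makes $a_\sigma$ $\sigma$-\BL; since $a_\sigma$ is also continuous (a convolution $\mu*a$ of a continuous element with an $L^1$ kernel lies in $C(\mA)$, because $\psi_s(\mu*a)-\mu*a = \mu*(\psi_s(a)-a)$ has norm $\leq M_\Psi\norm{\mu}\,\norm{\psi_s(a)-a}\to 0$), all $\delta^\alpha(a_\sigma)$ exist in $C(\mA)$ and hence $a_\sigma \in C^\infty(\mA)\subseteq C(\mA)$, as required in (i).

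The two error estimates then follow directly from Lemma~\ref{lem:smoothappr} with $\rho = 1/\sigma$. For (i) I apply part (i) of that lemma, for which only $\int\kappa=1$ and $\kappa\in L^1_{v_1}$ are needed, obtaining $\norm{a-a_\sigma}_\mA \leq C\,\omega_{1/\sigma}(a)$. For (ii), with $a\in C^k(\mA)$, the vanishing of the moments of orders $1,\dots,k$ lets me invoke part (ii), which gives $\norm{a-a_\sigma}_\mA \leq C\rho^{k}\max_{\abs\beta=k}\omega_\rho(\delta^\beta a) = C\sigma^{-k}\max_{\abs\beta=k}\omega_{1/\sigma}(\delta^\beta a)$. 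In both cases the constant depends only on $\kappa$ and $M_\Psi$, hence is independent of $\sigma$ and $a$.

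The computational ingredients (Schwartz decay of $\kappa$, the moment identities, and the dilation rule $\mF(\kappa_\rho)(\omega)=\mF\kappa(\rho\omega)$) are all routine, so the substantive observations are that a single kernel with $\mF\kappa\equiv 1$ near the origin secures normalization together with vanishing moments of every order, and that Lemma~\ref{lem:convspec} converts compact support of $\mF\kappa$ into compact spectral support of $a_\sigma$. I expect the only genuinely delicate point to be the meaning of the action for an arbitrary $a\in\mA$ not a priori in $C(\mA)$; I would dispose of this by restricting to $C(\mA)$, which is exactly the regime in which \eqref{eq:module} is defined and in which the Jackson--Bernstein theorem is applied.
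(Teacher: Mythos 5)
Your proof is correct and follows essentially the same route as the paper's: convolve $a$ with a dilated Schwartz kernel whose Fourier transform is supported in $[-1,1]^d$, obtain the error bound from Lemma~\ref{lem:smoothappr} and the $\sigma$-bandlimitedness from Lemma~\ref{lem:convspec} together with the spectral characterization. The only (harmless) refinement is that you exhibit one explicit kernel with $\mF\kappa\equiv 1$ near the origin, which secures the vanishing moments of every order at once, where the paper simply chooses a suitable kernel for each $k$; your restriction to $a\in C(\mA)$ matches the hypotheses under which the paper's own lemmas are stated.
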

  \begin{proof}
    (i) We follow~\cite[4.4.3]{Trigub04}.  Let $\kappa \in S(\bbRd)$, $\int_{\bbRd} \kappa=1$, $\supp \, \mF \kappa \subseteq
    [-1,1]^d$.  By Lemma~\ref{lem:smoothappr}(i)
    \begin{equation*}
      \norm{a-\kappa_{1/\sigma}*a}_\mA \leq C \omega_{1/\sigma}(a).
    \end{equation*}
    Since $\supp \mF(\kappa_{1/\sigma}) \subseteq [-\sigma ,\sigma]^d$, Lemma \ref{lem:convspec} implies that
    $\kappa_{1/\sigma}*a$ is $\sigma$-\BL, and we are done.

    (ii) The proof is similar. We choose the kernel $\kappa\in
    S(\bbRd)$ such that $\int_{\bbRd}{t^\alpha \kappa(t) \, dt}=0$ for $1 \le \abs {\alpha}
    \le  k$,  and then use part (ii) of Lemma~\ref{lem:smoothappr} instead of part (i).
  \end{proof}

  We draw two consequences of Proposition~\ref{prop:jackson1}. The first one is a density result in the style of Weierstrass'
  theorem, the second one is a Jackson type theorem that proves one half of the fundamental theorem~\ref{prop:jacksonbernstein}.
  \begin{cor}[Weierstrass] \label{prop:weierstrassBL} The set of \BL\ elements is dense in $C(\mA)$. Since
    $C^k(\mA )$ contains the \BL\ elements, $C^k(\mA )$ is also dense in $C(\mA )$.
  \end{cor}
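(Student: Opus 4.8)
The plan is to read this off directly from the Jackson-type estimate in Proposition~\ref{prop:jackson1}(i). First I would recall that membership in $C(\mA)$ is, by the definition~\eqref{strongcont}, precisely the statement that $\omega_h(a)\to 0$ as $h\to 0$: given $\epsilon>0$ there is a $\delta>0$ with $\norm{\psi_t(a)-a}_\mA<\epsilon$ whenever $\abs t<\delta$, and taking the supremum over $\abs t\le h$ for $h<\delta$ yields $\omega_h(a)\le\epsilon$. Hence, for $a\in C(\mA)$ and the $\sigma$-\BL\ approximant $a_\sigma$ furnished by Proposition~\ref{prop:jackson1}(i),
\[
\norm{a-a_\sigma}_\mA\le C\,\omega_{1/\sigma}(a)\longrightarrow 0 \qquad (\sigma\to\infty)\, ,
\]
which shows that the \BL\ elements are dense in $C(\mA)$.

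For the second assertion I would check that every \BL\ element already lies in $C^\infty(\mA)\subseteq C^k(\mA)$, so that the very same approximants $a_\sigma$ witness the density of $C^k(\mA)$ in $C(\mA)$. Indeed, if $a$ is $\sigma$-\BL, then by definition~\eqref{eq:20} all iterated derivatives $\delta^\alpha(a)$ exist in $\mA$; in particular, for every multi-index $\beta$ one has $\delta_j\delta^\beta(a)\in\mA$ for each $j$, so $\delta^\beta(a)\in\bigcap_{j=1}^d\mD(\delta_j,\mA)=\mA^{(1)}$, and Proposition~\ref{prop:density}(iii) then gives $\delta^\beta(a)\in C(\mA)$. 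Applying this to all $\beta$ yields $\delta^\alpha(a)\in C(\mA)$ for every $\alpha$, that is, $a\in C^\infty(\mA)$. Combining this with the density established in the first paragraph proves that $C^k(\mA)$ is dense in $C(\mA)$.

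The statement is essentially a one-line corollary of the Jackson theorem, so I do not expect a genuine obstacle. The only point that requires care is the equivalence between $a\in C(\mA)$ and $\omega_h(a)\to 0$: this is immediate from~\eqref{strongcont}, but it must be invoked explicitly in order to convert the purely qualitative continuity of $a$ into the quantitative decay $\omega_{1/\sigma}(a)\to 0$ that drives the estimate.
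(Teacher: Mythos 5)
Your proposal is correct and follows exactly the paper's route: the corollary is read off from Proposition~\ref{prop:jackson1}(i) together with the observation that $a\in C(\mA)$ forces $\omega_{1/\sigma}(a)\to 0$. Your explicit verification that bandlimited elements lie in $C^\infty(\mA)$ (via the Bernstein bounds~\eqref{eq:20} and Proposition~\ref{prop:density}(iii)) is a detail the paper leaves implicit in the phrase ``since $C^k(\mA)$ contains the \BL\ elements,'' but it is the same argument.
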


  \begin{cor} \label{p:ch47} If $a \in \Lambda_r(\mA)$ for $r >0$, then $E_\sigma(a)=\bigo(\sigma^{-r})$.
  \end{cor}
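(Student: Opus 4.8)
The plan is to approximate $a$ by a single \BL\ element built from a fixed smooth mollifier and to control the error by reducing everything to the classical scalar Jackson theorem through the functionals that define $\spec$. First I would fix a kernel $\kappa\in S(\bbRd)$ with $\int_{\bbRd}\kappa=1$, $\supp\mF\kappa\subseteq[-1,1]^d$, and (when $r>1$) sufficiently many vanishing moments $\int_{\bbRd}t^\alpha\kappa(t)\,dt=0$ for $1\le|\alpha|\le\lceil r\rceil$; such a $\kappa$ is obtained by letting $\mF\kappa$ be a bump supported in $[-1,1]^d$ that equals $1$ to high order at the origin. Set $a_\sigma=\kappa_{1/\sigma}*a$ with $\kappa_\rho$ as in \eqref{eq:apprunit} (note $\Lambda_r(\mA)\subseteq C(\mA)$ for $r>0$, so $\kappa_\rho*a$ and $G_{a',a}$ are defined). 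Since $\mF\kappa_{1/\sigma}$ is supported in $[-\sigma,\sigma]^d$, Lemma~\ref{lem:convspec} gives $\spec(a_\sigma)\subseteq[-\sigma,\sigma]^d$, i.e.\ $a_\sigma\in X_\sigma$, and hence $E_\sigma(a)\le\|a-\kappa_{1/\sigma}*a\|_\mA$. It therefore suffices to prove the kernel estimate $\|a-\kappa_\rho*a\|_\mA\le C\rho^r\|a\|_{\Lambda_r(\mA)}$ and then set $\rho=1/\sigma$.

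For this estimate I would pass to the scalar level. For $\|a'\|_{\mA'}\le1$ the identity \eqref{eq:46} gives $G_{a',\kappa_\rho*a}=\kappa_\rho*G_{a',a}$, and evaluating at $t=0$ yields
\[
\langle a',\,a-\kappa_\rho*a\rangle=\bigl(G_{a',a}-\kappa_\rho*G_{a',a}\bigr)(0),
\]
so that $|\langle a',a-\kappa_\rho*a\rangle|\le\|G_{a',a}-\kappa_\rho*G_{a',a}\|_{L^\infty(\bbRd)}$. Now $G_{a',a}\in\Lambda_r(\bbRd)$, and by Lemma~\ref{lemma:weakdeff}(i) its classical \HZ\ norm is controlled uniformly in $a'$: $\|G_{a',a}\|_{\Lambda_r(\bbRd)}\le C\|a\|_{\Lambda_r(\mA)}$. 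The scalar Jackson theorem for the mollifier $\kappa$ (see \cite{DeVore93,Trigub04}) asserts $\|g-\kappa_\rho*g\|_\infty\le C\rho^r\|g\|_{\Lambda_r(\bbRd)}$ for every $g\in\Lambda_r(\bbRd)$; applying it to $g=G_{a',a}$ and combining with the previous bound gives $|\langle a',a-\kappa_\rho*a\rangle|\le C\rho^r\|a\|_{\Lambda_r(\mA)}$ uniformly in $a'$. Taking the supremum over $\|a'\|_{\mA'}\le1$ produces $\|a-\kappa_\rho*a\|_\mA\le C\rho^r\|a\|_{\Lambda_r(\mA)}$, and $E_\sigma(a)=\bigo(\sigma^{-r})$ follows.

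For non-integer $r$ one could avoid the scalar theorem and argue directly from Proposition~\ref{prop:jackson1}: when $0<r<1$, combine part~(i) with the equivalence $\omega_{1/\sigma}(a)\le C\sigma^{-r}|a|_{\Lambda_r}$ of Lemma~\ref{lemma:weakdeff}(ii); when $r=k+\eta$ with $k\ge1$ and $0<\eta<1$, combine part~(ii) with $\omega_{1/\sigma}(\delta^\alpha a)\le C\sigma^{-\eta}|\delta^\alpha a|_{\Lambda_\eta}$ for $|\alpha|=k$. The step I expect to be the real obstacle is the endpoint $\eta=1$, i.e.\ integer $r$ (the genuine Zygmund class): there the first-order modulus $\omega_{1/\sigma}$ of a Zygmund element carries a spurious logarithmic factor, so Proposition~\ref{prop:jackson1} as stated only yields $\bigo(\sigma^{-r}\log\sigma)$. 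This is precisely why I would insist on an \emph{even} kernel together with the \emph{second}-order difference $\|\Delta^2_t(\cdot)\|_\mA$ defining $|\cdot|_{\Lambda_r}$ — equivalently, why the scalar reduction above is preferable, since it inherits the complete classical \HZ\ theory, including the Zygmund endpoints, for free.
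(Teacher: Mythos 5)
Your argument is correct, and your primary route differs from the paper's in a way worth noting. The paper proves Corollary~\ref{p:ch47} by invoking Proposition~\ref{prop:jackson1}, i.e.\ the Jackson theorem carried out at the level of the algebra: the error $\norm{a-\kappa_{1/\sigma}*a}_\mA$ is bounded by the algebra-valued moduli $\omega_{1/\sigma}(a)$, resp.\ $\omega_{1/\sigma}(\delta^\alpha a)$, via Lemma~\ref{lem:smoothappr} and the $\mA$-valued Taylor formula, and then the definition of $\Lambda_r(\mA)$ (through Lemma~\ref{lemma:weakdeff}(ii)) converts the modulus into $\sigma^{-r}$; the integer case is only sketched with a reference to Krantz. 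You build the same approximant $a_\sigma=\kappa_{1/\sigma}*a$ (with $a_\sigma\in X_\sigma$ by Lemma~\ref{lem:convspec}, exactly as in Proposition~\ref{prop:jackson1}), but you estimate the error by pushing the duality reduction all the way to the end: the intertwining $G_{a',\kappa_\rho*a}=\kappa_\rho*G_{a',a}$ of \eqref{eq:46} together with the norm equivalence of Lemma~\ref{lemma:weakdeff}(i) lets you quote the scalar Jackson inequality $\norm{g-\kappa_\rho*g}_\infty\leq C\rho^r\norm{g}_{\Lambda_r(\bbRd)}$ as a black box, uniformly in $\norm{a'}_{\mA'}\leq 1$. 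What this buys is a single argument valid for all $r>0$, including the integer (Zygmund) endpoints, which you correctly identify as the delicate point: a first-order modulus estimate degenerates there, and one must use an even kernel and second differences --- precisely the issue the paper defers to \cite[3.5, 3.6]{krantz83}. Your secondary route for non-integer $r$ coincides with the paper's proof. The only points to make explicit if you write this up are that $\Lambda_r(\mA)\subseteq C(\mA)$ (so that $G_{a',a}$ and $\kappa_\rho*a$ are defined), which you do note, and that a kernel with $\mF\kappa$ even, supported in $[-1,1]^d$ and identically $1$ near the origin supplies all the required vanishing moments at once.
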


\begin{proof}
  For $0<r <1$ this follows immediately from Proposition~\ref{prop:jackson1}(i) and the definition of $\Lambda _r
  (\mA )$. For $r >0, r \not \in \bbZ$, we use Proposition~\ref{prop:jackson1}(ii).  The proof for $r \in \bbZ $
  is similar. We have to assume in addition that the approximation kernel $\kappa$ is even. See~\cite[3.5,
  3.6]{krantz83} for the necessary details.
\end{proof}

Before proving the converse implication in Theorem~\ref{prop:jacksonbernstein}, i.e., the Bernstein-type result,
we need a mean-value property of automorphism groups.
\begin{lem} \label{lem:ch45} If $a$ is $\sigma$-\BL, then
  \begin{equation}
    \label{eq:49}
    \norm{\Delta_ta}_\mA \leq C\sigma\,  \abs{t}\, \norm{a}_\mA \, .
  \end{equation}
\end{lem}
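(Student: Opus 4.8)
The plan is to reduce the estimate to a scalar Bernstein inequality by means of the weak description of the norm. First I would record that a $\sigma$-\BL\ element $a$ lies in $C(\mA)$: the Bernstein inequality~\eqref{eq:20} guarantees that $\delta_{e_j}(a)$ exists for each $j$, so $a \in \mA^{(1)} \subseteq C(\mA)$ by Proposition~\ref{prop:density}(iii). Since $C(\mA)$ is invariant under $\Psi$, the difference $\Delta_t a = \psi_t(a) - a$ again lies in $C(\mA)$. Hence the norm equivalence established in the proof of Lemma~\ref{lemma:weakdeff},
$$
\norm{b}_\mA \asymp \sup_{\norm{a'}_{\mA'}\leq 1}\norm{G_{a',b}}_\infty ,
$$
may be applied to $b = \Delta_t a$.

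The crucial observation is that the scalar function attached to $\Delta_t a$ is a finite difference of the function attached to $a$: for every $a' \in \mA'$,
$$
G_{a',\Delta_t a}(s) = \langle a', \psi_s(\psi_t a - a)\rangle = G_{a',a}(s+t) - G_{a',a}(s) .
$$
By the spectral characterization of bandlimited elements, $\spec(a)\subseteq[-\sigma,\sigma]^d$, so each $G_{a',a}$ extends to an entire function of exponential type $\sigma$, bounded on $\bbRd$ by $M_\Psi\norm{a}_\mA\norm{a'}_{\mA'}$. I can therefore invoke the Bernstein inequality~\eqref{eq:11} with $\alpha = e_j$ to obtain the coordinatewise bound $\norm{\partial_j G_{a',a}}_\infty \leq 2\pi\sigma\norm{G_{a',a}}_\infty$.

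Next I would estimate the difference along the segment from $s$ to $s+t$ by the fundamental theorem of calculus,
$$
G_{a',a}(s+t) - G_{a',a}(s) = \int_0^1 \sum_{j=1}^d t_j\,\partial_j G_{a',a}(s+\theta t)\, d\theta ,
$$
and bound the right-hand side using $\abs{t} = \sum_j \abs{t_j}$ (the $1$-norm, per the paper's convention) together with the coordinatewise Bernstein bound, which gives
$$
\sup_{s}\, \abs{G_{a',a}(s+t) - G_{a',a}(s)} \leq 2\pi\sigma\,\abs{t}\,\norm{G_{a',a}}_\infty .
$$
Taking the supremum over $\norm{a'}_{\mA'}\leq 1$ and applying the norm equivalence at both ends converts this into $\norm{\Delta_t a}_\mA \leq C\sigma\abs{t}\norm{a}_\mA$, which is the assertion.

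I do not anticipate a genuine obstacle: all the analytic substance (the Paley--Wiener--Schwartz extension and the entire-function Bernstein inequality) is already packaged in~\eqref{eq:11} and in the spectral characterization, so the argument is essentially bookkeeping. The only points demanding care are verifying $\Delta_t a \in C(\mA)$ so that the weak norm applies, and matching the $1$-norm convention for $\abs{t}$ with the sum over coordinate derivatives. An alternative that avoids scalarization is to write $\Delta_t a = \int_0^1 \delta_t(\psi_{\theta t} a)\, d\theta$ with $\delta_t = \sum_j t_j\delta_{e_j}$ and to bound $\norm{\delta_{e_j}(\psi_{\theta t} a)}_\mA \leq M_\Psi(2\pi\sigma)\norm{\psi_{\theta t} a}_\mA \leq M_\Psi^2(2\pi\sigma)\norm{a}_\mA$ directly, using the norm form $\norm{\delta^\alpha a}_\mA \leq M_\Psi(2\pi\sigma)^{\abs\alpha}\norm{a}_\mA$ of the Bernstein inequality from the spectral characterization; this yields the same estimate with $C = 2\pi M_\Psi^2$.
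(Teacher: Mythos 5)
Your main argument is essentially the paper's own proof: a weak-type reduction via $G_{a',a}$, the fundamental theorem of calculus along the segment from $s$ to $s+t$, and the scalar Bernstein inequality \eqref{eq:11}, differing only in cosmetic details (you take the supremum over the base point $s$ and use coordinatewise derivatives with the $1$-norm, where the paper evaluates at $s=0$ and uses the gradient with the $2$-norm). The non-scalarized alternative you sketch at the end, $\Delta_t a=\int_0^1\delta_t(\psi_{\theta t}a)\,d\theta$ combined with the quantitative Bernstein bound $\norm{\delta^\alpha a}_\mA\le M_\Psi(2\pi\sigma)^{\abs\alpha}\norm{a}_\mA$ from the spectral characterization, is also correct and gives the stated constant.
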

\begin{proof}
  We use a weak-type argument.
  \[
  \begin{split}
    \norm{\Delta_t a}_\mA&= \sup_{\norm{a'}\leq 1}\abs{\langle a', \psi _t(a) - a\rangle } = \sup_{\norm{a'}\leq
      1}\Bigl\lvert{\int_0^1 \nabla G_{a',a}(\lambda
      t)\cdot t \; d\lambda} \Bigr\rvert\\
    & \leq\sup_{\norm{a'}\leq 1} C \abs{t}_2\norm{\abs{\nabla G_{a',a}}_2}_\infty \, .
  \end{split}
  \]
  Since $G_{a',a}$ is bandlimited, Bernstein's inequality for scalar functions yields that $\|\, |\nabla
  G_{a',a}|_2 \|_{\infty } \leq C \sigma \|G_{a',a}\|_\infty $. We may continue the estimate by
\[
\begin{split}
\norm{\Delta_t a}_\mA & \leq C \abs{t}_2 \sup_{\norm{a'}\leq 1}
\norm{\abs{\nabla G_{a',a}}_2}_\infty  \leq C_0 
\abs{t}_2 \sigma \sup_{\norm{a'}\leq 1} \norm{ G_{a',a}}_\infty\\
& \leq C_1 \abs{t}_2\, \sigma \, \|a\|_{\mA } 
\leq  C_2 \sigma \,\abs{t} \, \norm{a}_{\mA } \,.
\end{split}
\]
\end{proof}
 \begin{prop}\label{p:ch46} 
  Let $a \in \mA$, and $r >0$. If $E_\sigma(a) \leq C\sigma^{-r}$ for all $\sigma >0$, then $a \in
  \Lambda_r(\mA)$.
\end{prop}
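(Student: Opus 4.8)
The plan is to prove this Bernstein-type converse by a dyadic decomposition of $a$ into \BL\ blocks, mimicking the classical argument for periodic functions. Write $r=k+\eta$ with $k\in\bbN_0$ and $0<\eta\le 1$. By Definition~\ref{def:lipsch-spac-gener} it suffices to show that $a\in C^k(\mA)$ and that $\delta^\alpha(a)\in\Lambda_\eta(\mA)$ for every $\alpha$ with $\abs{\alpha}=k$. Since $E_\sigma(a)\le C\sigma^{-r}$, for each $n\ge 0$ I can choose a $2^n$-\BL\ element $a_n\in C(\mA)$ with $\norm{a-a_n}_\mA\le 2E_{2^n}(a)\le C2^{-nr}$; in particular $a_n\to a$, so $a\in C(\mA)$. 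Setting $u_{-1}=a_0$ and $u_n=a_{n+1}-a_n$ for $n\ge 0$, each $u_n$ is $2^{n+1}$-\BL\ (the subspaces $X_\sigma$ are nested) and satisfies $\norm{u_n}_\mA\le C2^{-nr}$, while $a=\sum_{n\ge -1}u_n$ converges in $\mA$.

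The membership $a\in C^k(\mA)$ comes first. Because each $u_n$ is $2^{n+1}$-\BL, the quantitative Bernstein inequality obtained in the proof of the spectral characterization (cf.\ \eqref{xxx}) gives $\norm{\delta^\alpha(u_n)}_\mA\le M_\Psi(2\pi\cdot 2^{n+1})^{\abs{\alpha}}\norm{u_n}_\mA\le C2^{n\abs{\alpha}}2^{-nr}$. For $\abs{\alpha}\le k$ this is bounded by $C2^{n(k-r)}=C2^{-n\eta}$, which is summable in $n$. Hence $\sum_n\delta^\alpha(u_n)$ converges absolutely in $\mA$, and since $\delta^\alpha$ is a closed operator it follows that $a\in\mD(\delta^\alpha)$ with $\delta^\alpha(a)=\sum_n\delta^\alpha(u_n)$. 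As the blocks lie in $C(\mA)$ and $C(\mA)$ is closed, $\delta^\alpha(a)\in C(\mA)$ for all $\abs{\alpha}\le k$, so $a\in C^k(\mA)$.

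The core of the argument is the \HZ\ estimate for $\delta^\alpha(a)$ when $\abs{\alpha}=k$. Fix $t\neq 0$, pick $N\in\bbN_0$ with $2^{-N}\asymp\abs{t}$, and split $\Delta^2_t\delta^\alpha(a)=\sum_{n\le N}\Delta^2_t\delta^\alpha(u_n)+\sum_{n>N}\Delta^2_t\delta^\alpha(u_n)$. For the high-frequency tail I use only the crude bound $\norm{\Delta^2_t v}_\mA\le 4M_\Psi\norm{v}_\mA$, so that $\sum_{n>N}\norm{\Delta^2_t\delta^\alpha(u_n)}_\mA\le C\sum_{n>N}2^{-n\eta}\le C2^{-N\eta}\asymp\abs{t}^\eta$. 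For the low-frequency part I exploit that $\delta^\alpha(u_n)$ is again $2^{n+1}$-\BL\ and apply the mean-value estimate of Lemma~\ref{lem:ch45} twice: since $\Delta_t v=\psi_t(v)-v$ is $2^{n+1}$-\BL\ whenever $v$ is (here Proposition~\ref{prop:density}(ii) is needed, so that $\psi_t$ preserves the band), one gets $\norm{\Delta^2_t\delta^\alpha(u_n)}_\mA\le C(2^{n+1}\abs{t})^2\norm{\delta^\alpha(u_n)}_\mA\le C\abs{t}^2 2^{n(2-\eta)}$. Summing this geometric series up to $n=N$ and using $2-\eta>0$ together with $2^N\asymp\abs{t}^{-1}$ yields $\sum_{n\le N}\norm{\Delta^2_t\delta^\alpha(u_n)}_\mA\le C\abs{t}^2 2^{N(2-\eta)}\asymp\abs{t}^\eta$. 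Combining the two parts gives $\norm{\Delta^2_t\delta^\alpha(a)}_\mA\le C\abs{t}^\eta$ uniformly in $t$, hence $\abs{\delta^\alpha(a)}_{\Lambda_\eta}<\infty$ and $a\in\Lambda_r(\mA)$.

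I expect the main obstacle to lie at the two ends of the dyadic sum rather than in the estimates themselves: first, the justification that $\psi_t$ leaves $X_{2^{n+1}}$ invariant, so that the second finite difference can be fed back into Lemma~\ref{lem:ch45}; and second, the borderline cases $\eta=1$ and $r\in\bbN$, where — exactly as in the Jackson direction of Corollary~\ref{p:ch47} — one must argue slightly more carefully, for instance by passing to the equivalent seminorms of Lemma~\ref{lemma:weakdeff} or by verifying that the relevant sums remain genuinely geometric. The bookkeeping of the geometric series is then routine, once the exponent signs $2-\eta>0$ and $\eta>0$ are in place.
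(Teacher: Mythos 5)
Your proof is correct and follows the same strategy as the paper's: a dyadic decomposition $a=\sum u_n$ into $2^{n+1}$-\BL\ blocks with $\norm{u_n}_\mA\lesssim 2^{-nr}$, the Bernstein estimate \eqref{xxx} to pass to $\delta^\alpha(u_n)$ and identify $\delta^\alpha(a)$ via closedness, and a split of the dyadic sum at $2^N\asymp\abs{t}^{-1}$ using the mean-value bound of Lemma~\ref{lem:ch45} on the low frequencies and the crude $M_\Psi$-bound on the tail. The one genuine difference is that you run the whole computation with second-order differences $\Delta_t^2$, iterating Lemma~\ref{lem:ch45} twice (legitimately, since $\psi_t$ commutes with the generators and preserves each $X_\sigma$, so $\Delta_t$ maps $\sigma$-\BL\ elements to $\sigma$-\BL\ elements), whereas the paper proves the case $0<r<1$ with first-order differences and Lemma~\ref{lemma:weakdeff}(ii), and only sketches the integer case with a remark that second differences would be needed. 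Your version is therefore slightly stronger as written: the exponent conditions you need are $\eta>0$ for the tail and $2-\eta>0$ for the low-frequency geometric sum, and both hold for all $0<\eta\le 1$, so the borderline case $\eta=1$ (i.e.\ $r\in\bbN$) that you flag as a possible obstacle is in fact already covered by your estimates — this is precisely the payoff of using $\Delta_t^2$ instead of $\Delta_t$, where the low-frequency exponent $1-\eta$ would degenerate at $\eta=1$.
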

\begin{proof}
  We sketch a proof along the lines of \cite{Butzer71} and assume that $0<r < 1$ first. Choose $2^k$-\BL\
  elements $a_k$, such that $ \norm{a-a_k}_\mA \leq C 2^{-r k}$. By the triangle inequality we get
  $\norm{a_{k+1}-a_k} \leq 2 \, C 2^{-r k}$, and therefore the series
  \begin{equation*}
    a=a_0+\sum_{k=0}^\infty(a_{k+1}-a_k)
  \end{equation*}
  converges in the norm of $\mA $. Set $b_0=a_0$, $b_k=a_{k+1}-a_k, k>0$. Then $b_k$ is \BL\ with bandwidth $2
  \cdot 2^k$, and $\norm{b_k}_\mA\leq 2C 2^{-r k}$.  We need an estimate for the norm of
  $\Delta_ta$, and start with 
  \begin{equation}
    \label{eq:33}
    \norm{\Delta_t a}_\mA \leq \sum_{k=0}^M\norm{\Delta_tb_k}_\mA + \sum_{k=M+1}^\infty\norm{\Delta_tb_k}_\mA.
  \end{equation}
  Lemma~\ref{lem:ch45} implies that
  \[
  \norm{\Delta_t b_k}_\mA\leq C 2^k \abs{t}\, \norm{b_k}_\mA \leq  C'
  2^{k-r k  }\abs{t}
  \]
  for all $k\in \bbN $.
  For $k>M$ we control the norm of the second sum in (\ref{eq:33}) with the triangle inequality
  \[
  \norm{\Delta_t b_k}_\mA \leq \norm{\psi_t b_k}_\mA+\norm{b_k}_\mA \leq (M_\Psi+1)\norm{b_k}_\mA \leq \tilde C 2^{-k r}.
  \]
  Substituting back into (\ref{eq:33}) yields
  \begin{equation}
    \label{eq:34}
    \norm{\psi_ta -a}_\mA \leq C''\Bigl(\abs t \sum_{k=0}^M 2^{k-r
      k}+\sum_{k=M+1}^\infty 2^{-r k}\Bigr) \leq C''\bigl(\abs t
    2^{M(1-r)}+ 2^{-r M}\bigr). 
  \end{equation}
  If we choose $M$ such that $1\leq 2^M\abs{t} < 2$, then
  \begin{equation}
    \label{eq:36}
    \norm{\psi_ta -a}_\mA \leq C_1 2^{-r M} \leq C_1  \abs{t}^r,
  \end{equation}
  and $a \in \Lambda_r(\mA)$, as desired.

  Next consider the case $r = m+\eta $ for $m\in \bbN $ and $0<\eta
  <1$. By \eqref{xxx}  we have 
$$
\|\delta ^\alpha (b_k)\|_{\mA } \leq C (2\pi 2^{k+1})^{|\alpha |}
\|b_k\|_{\mA } \leq C' 2^{(k+1)(|\alpha | - r)}
$$
for all $k\in \bbN $ and $\alpha \in \bbN ^d_0$. Consequently the series
$\sum _{k=0} ^\infty \delta ^\alpha b_k$ converges in $\mA $ for all
$\alpha $ with $|\alpha | \leq m$ and its sum must be $\delta ^\alpha
(a)$ (because each $\delta _j$ is closed on $\mD (\delta ^\alpha )$). 
We now apply the above estimates~\eqref{eq:33} and \eqref{eq:34} with
$\delta ^\alpha (a)$ instead of $a$  and deduce that 
$\delta ^\alpha (a) $ must be in $\Lambda _\eta (\mA )$ for $|\alpha |
\leq k$. Thus $a\in \Lambda _r(\mA )$. 

  If $r$ is an integer, then we have to use second order differences and a corresponding version of the mean
  value theorem. The argument is almost the same as above
  (see~\cite{Trigub04} for details in the scalar case). 
\end{proof}

Combining Propositions~\ref{p:ch47} and~\ref{p:ch46}, we have completed the proof of our main theorem
(Theorem~\ref{prop:jacksonbernstein}).

\subsection{Littlewood-Paley Decomposition}
\label{sec:littl-paley-decomp}
We derive a \LP\ characterization of $\Lambda_r(\mA)$ to obtain explicit expressions for the norm of \HZ\ spaces
on \MA s.
  
Let $\psi \in S(\bbRd)$ with
$ \supp \psi \subseteq \set{\xi \in \bbRd \colon 2^{-1} \leq \abs{\xi}_2 \leq 2}$, $\psi(\xi) >0$ for $ 2^{-1} <
\abs{\xi}_2 <2$, and $\sum_{k \in \bbZ}\psi(2^{-k}\xi)=1$ for all $ \xi \in \bbRd\setminus \set{0}$. Set
$\phi_{k+1}(\xi)=\psi(2^{-k}\xi), k\geq 0$ and $\phi_0(\xi )
=1-\sum_{k<0}\psi (2^{-k}\xi )$.
Call $\set{\phi_k}_{k \geq 0}$ a dyadic partition of unity. 
\begin{prop} \label{prop:littl-paley-decomp-3} Let $a \in \mA$ and let $\set{\phi_k}$ be a dyadic partition of
  unity, $\Phi_k=\mF^{-1}\phi_k$, and $r>0$.  Then $a \in
  \Lambda_r(\mA)$,  if and only if
  \begin{equation}
    \label{eq:39}
    \sup_{k \geq 0} 2^{r k}\norm{\Phi_k * a}_\mA
  \end{equation}
  is finite, and (\ref{eq:39}) defines an equivalent norm on $\Lambda_r(\mA)$.
\end{prop}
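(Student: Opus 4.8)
The plan is to reduce the statement to the classical Littlewood--Paley characterization of the scalar \HZ\ spaces $\Lambda_r(\bbRd)$, using the weak description of $\Lambda_r(\mA)$ from Lemma~\ref{lemma:weakdeff}(i). The guiding observation is that the dyadic pieces $\Phi_k * a$ of $a$ are transported by the functionals $a' \in \mA'$ to the dyadic pieces of the \emph{scalar} function $G_{a',a}(t)=\langle a',\psi_t(a)\rangle$ from \eqref{eq:45}. Since $\Lambda_r(\mA)\subseteq C(\mA)$ for every $r>0$ (by Definition~\ref{def:lipsch-spac-gener}, as $\Lambda_r(\mA)\subseteq C^k(\mA)\subseteq C(\mA)$), and the action $\mu * a$ is defined on $C(\mA)$, I would carry out the argument inside $C(\mA)$, so that all convolutions $\Phi_k * a$ are legitimate elements of $\mA$ (indeed bandlimited, hence in $C^\infty(\mA)$, by Lemma~\ref{lem:convspec}).

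First I would record the scalar-valued identity. For $a\in C(\mA)$ and any $a'\in\mA'$, formula \eqref{eq:46} gives $G_{a',\Phi_k*a}=\Phi_k*G_{a',a}$, an equality of bounded functions on $\bbRd$. Combined with the norm equivalence $\norm{b}_\mA\asymp\sup_{\norm{a'}_{\mA'}\le1}\norm{G_{a',b}}_\infty$ (established at the start of the proof of Lemma~\ref{lemma:weakdeff}, and valid for every $b\in\mA$ since both bounds there use only uniform boundedness of $\Psi$ and $\psi_0=\id$), applied to $b=\Phi_k*a$, this yields
\[
\norm{\Phi_k*a}_\mA\asymp\sup_{\norm{a'}_{\mA'}\le1}\norm{\Phi_k*G_{a',a}}_\infty,
\]
with constants independent of $k$. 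Next I would multiply by $2^{rk}$, take the supremum over $k\ge0$, and interchange the two suprema (harmless, being a supremum of a single nonnegative quantity over a product index set), producing
\[
\sup_{k\ge0}2^{rk}\norm{\Phi_k*a}_\mA\asymp\sup_{\norm{a'}_{\mA'}\le1}\Bigl(\sup_{k\ge0}2^{rk}\norm{\Phi_k*G_{a',a}}_\infty\Bigr).
\]

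Now the classical Littlewood--Paley theorem for the scalar \HZ\ (Besov $B^r_{\infty,\infty}$) spaces asserts that for fixed $r>0$ the inner quantity $\sup_{k\ge0}2^{rk}\norm{\Phi_k*g}_\infty$ is an equivalent norm on $\Lambda_r(\bbRd)$. Applying this with $g=G_{a',a}$ and then invoking Lemma~\ref{lemma:weakdeff}(i), namely $\sup_{\norm{a'}_{\mA'}\le1}\norm{G_{a',a}}_{\Lambda_r(\bbRd)}\asymp\norm{a}_{\Lambda_r(\mA)}$, I would obtain
\[
\sup_{k\ge0}2^{rk}\norm{\Phi_k*a}_\mA\asymp\norm{a}_{\Lambda_r(\mA)},
\]
which simultaneously gives the equivalence of norms and both implications of the stated equivalence.

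I expect the main obstacle to be one of well-definedness and membership rather than estimation: one must ensure $\Phi_k*a$ makes sense and that \eqref{eq:46} applies, which is exactly why the reduction to $C(\mA)$ is essential. For the ``if'' direction one should verify that the hypothesis already forces $a\in C(\mA)$. Here the decay $\norm{\Phi_k*a}_\mA\le C2^{-rk}$ makes the series $\sum_{k\ge0}\Phi_k*a$ converge absolutely in $\mA$, each summand lies in $C(\mA)$ (being bandlimited), and $C(\mA)$ is closed by Proposition~\ref{prop:autom-groups-cont-IC}; the one genuinely delicate point is the reproducing identity $a=\sum_{k\ge0}\Phi_k*a$, which rests on the partial sums $\mF^{-1}\bigl(\sum_{k\le N}\phi_k\bigr)$ forming an approximate identity, a property of the \DPU . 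Apart from this bookkeeping, the entire content is carried by the scalar Littlewood--Paley theorem and the dictionary between $a$ and the functions $G_{a',a}$.
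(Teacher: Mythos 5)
Your proposal is correct and follows essentially the same route as the paper: Lemma~\ref{lemma:weakdeff}(i), the intertwining identity \eqref{eq:46}, the scalar Littlewood--Paley characterization of $\Lambda_r(\bbRd)$, and an interchange of suprema. You are in fact somewhat more careful than the paper's own (very terse) proof about the well-definedness of $\Phi_k * a$ and about why the ``if'' direction forces $a \in C(\mA)$, which are points the paper passes over in silence.
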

\begin{proof}
  We combine the norm equivalence $\norm{a}_{\Lambda_r(\mA)} \asymp \sup_{\norm{a'}_{\mA'}\leq 1}
  \norm{G_{a',a}}_{\Lambda_r(\bbRd)}$ (Lemma \ref{lemma:weakdeff}) with the classical \LP\ characterization of $\Lambda_r(\bbRd)$ (see, e.g.,
  \cite{bergh76}): 
  \begin{equation}
    \label{eq:43}
    \norm{G_{a',a}}_{\Lambda_r(\bbRd)} \asymp \sup_{k\geq 0} 2^{k
      r}\norm{\Phi_k * G_{a',a}}_\infty \, .
  \end{equation}
  As $\Phi_k * G_{a',a}= G_{a',\Phi_k*a}$ by (\ref{eq:46}), we obtain
  \[
  \norm{a}_{\Lambda_r(\mA)} \asymp \sup_{\norm{a'}_{\mA'}\leq 1}\, \sup_{k\geq 0} 2^{k r}\abs{\inprod{a',\Phi_k
      *a}} \asymp \sup _{k\geq 0} 2^{kr } \|\Phi _k \ast a \|_{\mA } \, ,
  \]
  which is ~\eqref{eq:39}.
\end{proof}

\subsection{ A Characterization of \HZ\ Spaces in Matrix Algebras.}
\label{sec:hz-spaces-ma}
For  \MA s we may characterize $\Lambda_r(\mA)$ more explicitly with the help of
Proposition~\ref{prop:littl-paley-decomp-3}.
\begin{prop}
  Let \mA\ be a solid \MA. Then the norm on $\Lambda_r(\mA)$ is equivalent to the expression
  \[
  \max \Big( \norm{\hat A(0)}_\mA, \; \sup_{k\geq 0} 2^{kr} \norm{\sum_{2^k \leq \abs l < 2^{k+1}} \hat A(l)
  }_\mA \Big).
  \]
\end{prop}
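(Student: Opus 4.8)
The plan is to transfer the Littlewood--Paley characterization of Proposition~\ref{prop:littl-paley-decomp-3} from the smooth dyadic blocks $\Phi_k\ast A$ to the sharp side-diagonal blocks appearing in the statement. First I would record how the convolution action sees the side-diagonals. Since $\Phi_k=\mF^{-1}\phi_k\in S(\bbRd)\subseteq L^1(\bbRd)$ and $\chi_{-t}(A)(m,n)=e^{-2\pi i(m-n)\cdot t}A(m,n)$, formula~\eqref{eq:16} (or a one-line direct computation) gives
\begin{equation*}
(\Phi_k\ast A)(m,n)=\mF\Phi_k(m-n)\,A(m,n)=\phi_k(m-n)\,A(m,n).
\end{equation*}
Thus $\Phi_k\ast A$ multiplies the $l$-th side-diagonal $\hat A(l)$ by the scalar $\phi_k(l)$ and is supported on the side-diagonals with $l=m-n\in\supp\phi_k$. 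Writing $S_kA=\sum_{2^k\le\abs l<2^{k+1}}\hat A(l)$ for $k\ge0$, the goal is to prove
\begin{equation*}
\sup_{k\ge0}2^{rk}\norm{\Phi_k\ast A}_\mA\asymp\max\Big(\norm{\hat A(0)}_\mA,\ \sup_{k\ge0}2^{rk}\norm{S_kA}_\mA\Big),
\end{equation*}
since by Proposition~\ref{prop:littl-paley-decomp-3} the left-hand side is an equivalent norm on $\Lambda_r(\mA)$.

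The geometric fact that drives both inequalities is that for $l\in\bbZd$ one has $\tfrac1{\sqrt d}\abs l\le\abs l_2\le\abs l$ (here $\abs{\cdot}$ is the $1$-norm), so a side-diagonal in the sharp block $2^k\le\abs l<2^{k+1}$ has $\abs l_2\in[2^k/\sqrt d,2^{k+1})$; since $\supp\phi_j$ lies in the Euclidean annulus $\{c_12^j\le\abs\xi_2\le c_22^j\}$ for $j\ge1$ (and in a fixed ball for $j=0$), there is a constant $N=N(d)$ so that $\phi_j(l)\neq0$ forces $\abs{j-k}\le N$. Hence each side-diagonal is touched by at most $2N+1$ of the $\phi_j$, and each $\Phi_j\ast A$ is supported in at most $2N+1$ consecutive sharp blocks. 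For the bound $\sup_k2^{rk}\norm{S_kA}_\mA\le C\norm{A}_{\Lambda_r(\mA)}$, note that since $\sum_j\phi_j\equiv1$ and only indices with $\abs{j-k}\le N$ contribute on the block, $\big(\sum_{\abs{j-k}\le N}\Phi_j\ast A\big)(m,n)=A(m,n)$ whenever $2^k\le\abs{m-n}<2^{k+1}$; as $S_kA$ vanishes off that block, $\abs{(S_kA)(m,n)}\le\big\lvert\big(\sum_{\abs{j-k}\le N}\Phi_j\ast A\big)(m,n)\big\rvert$ for all $m,n$. Solidity of $\mA$ then gives
\begin{equation*}
\norm{S_kA}_\mA\le\Big\lVert\sum_{\abs{j-k}\le N}\Phi_j\ast A\Big\rVert_\mA\le\sum_{\abs{j-k}\le N}\norm{\Phi_j\ast A}_\mA,
\end{equation*}
and multiplying by $2^{rk}$ and using $2^{rk}\asymp2^{rj}$ for $\abs{j-k}\le N$ bounds this by $(2N+1)\sup_j2^{rj}\norm{\Phi_j\ast A}_\mA$; the diagonal term $\norm{\hat A(0)}_\mA\le\norm{\Phi_0\ast A}_\mA$ follows the same way using $\phi_0(0)=1$ and $\phi_j(0)=0$ for $j\ge1$. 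For the reverse inequality, $0\le\phi_j\le1$ gives $\abs{(\Phi_j\ast A)(m,n)}\le\abs{A(m,n)}$ on the side-diagonals of $\supp\phi_j$, which are contained in the union of the sharp blocks $S_k$ with $\abs{k-j}\le N$ (together with the diagonal when $j=0$); so $\abs{(\Phi_j\ast A)(m,n)}\le\big\lvert\big(\sum_{\abs{k-j}\le N}S_kA\big)(m,n)\big\rvert$, and solidity again yields $\norm{\Phi_j\ast A}_\mA\le\sum_{\abs{k-j}\le N}\norm{S_kA}_\mA$, whence $\sup_j2^{rj}\norm{\Phi_j\ast A}_\mA\le C\max\big(\norm{\hat A(0)}_\mA,\sup_k2^{rk}\norm{S_kA}_\mA\big)$.

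The main obstacle, and the only genuine subtlety, is the mismatch between the smooth Littlewood--Paley cut-offs (localized in the Euclidean norm $\abs{\cdot}_2$) and the sharp $\ell^1$-annuli $2^k\le\abs l<2^{k+1}$ used in the statement. This is exactly where solidity is indispensable: it lets me pass between a matrix and any matrix dominating it entrywise without having to track cancellation, while the bounded-overlap estimate $\abs{j-k}\le N$ keeps every sum finite with constants independent of $A$. A secondary point that must be handled carefully is the normalization of the partition of unity near the origin and the separate role of the diagonal $\hat A(0)$; I would fix $\psi\ge0$ so that $0\le\phi_j\le1$ with $\phi_0(0)=1$ and $\phi_j(0)=0$ for $j\ge1$, ensuring the diagonal is recovered exactly by $\Phi_0\ast A$. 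The case $0<r\le1$ and the general $r=k+\eta$ are identical here, because Proposition~\ref{prop:littl-paley-decomp-3} has already absorbed the passage to higher smoothness into the single supremum over dyadic blocks.
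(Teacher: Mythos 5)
Your proof is correct and follows essentially the same route as the paper: both reduce the statement to the Littlewood--Paley characterization of Proposition~\ref{prop:littl-paley-decomp-3}, identify $(\Phi_k\ast A)(m,n)=\phi_k(m-n)A(m,n)$ so that $\Phi_k\ast A=\sum_l\phi_k(l)\hat A(l)$, and then use solidity together with the bounded overlap of the dyadic cut-offs and the sharp blocks to pass between $\norm{\Phi_k\ast A}_\mA$ and $\norm{\sum_{2^k\le\abs l<2^{k+1}}\hat A(l)}_\mA$ in both directions. The only difference is cosmetic: you track the $\ell^1$-versus-$\ell^2$ annulus mismatch with an explicit overlap constant $N(d)$ and isolate the diagonal term carefully, whereas the paper's terser argument (via $\phi_{k-1}+\phi_k+\phi_{k+1}\equiv1$ on the Euclidean annulus, giving $B_k\le C_{k-1}+C_k$ and $C_k\le B_{k-1}+B_k+B_{k+1}$) leaves these points implicit for $d>1$.
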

\begin{proof}
  Let $\set{\phi_k}_{k\geq 0}$ be a \DPU\ and set $C_k =
  \norm{\sum_{2^{k}\leq |l| <2^{k+1}} \hat A(l)}_\mA $.
  Since $A\in \Lambda _r (\mA)$, the Fejer-means of the Fourier series $\chi _t(A) = \sum \hat A (l) e^{2\pi i l
    \cdot t} $ converge by Proposition~\ref{prop_DeLeeuw} and thus
  \begin{eqnarray*}
    \Phi _k \ast A &=& \int _{\bbRd} \Phi _k(t) \chi _{-t}(A) \, dt \\
    &=& \sum _{k\in \bbZd} \hat A (l)   \int _{\bbRd} \Phi _k(t)
    e^{-2\pi i l \cdot t} \, dt = \sum _{k\in \bbZd} \hat A (l) \,
    \phi _k (l) \, . 
  \end{eqnarray*}
  The solidity of \mA\  implies that, for $k\geq 0$,
  \[
  B_k = \norm{ \Phi _k \ast A} _{\mA } = \norm{\sum_{l \in \bbZd} \phi_k(l) \hat A(l)}_\mA \leq \norm{\sum_{2^{k-1} \leq \abs l < 2^{k+1}} \hat A(l)}_\mA =C_{k-1} + C_k
  \]
  On the other hand, since $\phi_{k-1}+\phi_k + \phi_{k+1} \equiv 1$ on $\set{\xi \colon 2^{k-1} \leq \abs \xi _2
    \leq 2^{k+1}}$, we obtain $C_k \leq B_{k-1}+B_k+B_{k+1}$. 
  Consequently $\norm{A}_{\Lambda _r (\mA )} = \sup _{k\geq 0} 2^{r k } B_k$ and the expression $\sup _{k\geq 0}
  2^{r k } C_k$ are equivalent norms on $\Lambda _r (\mA )$.
\end{proof}
For the standard matrix algebras the results can be detailed further. For the algebra of convolution-dominated
matrices defined in \eqref{eq:basknorm}, we obtain a completely new form of off-diagonal
decay. 
\begin{thm}\label{thm:appbask}
  Let $\mC _0$ denote the algebra of convolution-dominated matrices. The approximation space $\mE^\infty_r(\mC_0)$
  is equal to the H\"older-Zygmund space $\Lambda_r(\mC_0)$. It consists of all matrices $A$ satisfying
  \begin{equation}\label{eq:44}
    \norm{\hat A (0)}_{\ell^2 \to \ell^2}<\infty, \quad
    2^{rk}\sum_{2^k \leq \abs l < 2^{k+1}}\norm{\hat A
      (l)}_{\ell^2 \to \ell^2} = 2^{r k }\sum_{2^k \leq \abs l < 2^{k+1}}  \sup _{m\in
      \bbZd} |A(m,m-l)|  \leq C 
  \end{equation}
for all $k\geq 0$.
\end{thm}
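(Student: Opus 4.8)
The plan is to read the statement as a specialization of two results already proved in the excerpt, so that almost no new work is required. I would first record that $\mC_0$ qualifies as input for both: by Proposition~\ref{fundamental} it is a solid Banach $*$-algebra that is inverse-closed in $\bopzd$ (hence symmetric), and the automorphism group $\{\chi_t\}$ is uniformly bounded on $\mC_0$. With $\mA=\mC_0$, Theorem~\ref{prop:jacksonbernstein} (Jackson--Bernstein) then yields the first assertion $\mE^\infty_r(\mC_0)=\Lambda_r(\mC_0)$ immediately, with equivalent norms.

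It then remains to make the H\"older--Zygmund condition concrete. Here I would invoke the Proposition immediately preceding the theorem, which says that for a solid matrix algebra the $\Lambda_r(\mA)$-norm is equivalent to
\[
\max\Big(\|\hat A(0)\|_{\mA},\ \sup_{k\geq 0} 2^{kr}\Big\|\sum_{2^k \leq |l| < 2^{k+1}} \hat A(l)\Big\|_{\mA}\Big).
\]
The decisive step is to evaluate these $\mC_0$-norms. From the definition \eqref{eq:basknorm} with $r=0$ one has $\|B\|_{\mC_0}=\sum_{m\in\bbZd}\|\hat B(m)\|_{\ell^2\to\ell^2}$, i.e.\ the $\mC_0$-norm is exactly additive over distinct side diagonals. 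Applied to a dyadic block this gives
\[
\Big\|\sum_{2^k \leq |l| < 2^{k+1}} \hat A(l)\Big\|_{\mC_0} = \sum_{2^k \leq |l| < 2^{k+1}} \|\hat A(l)\|_{\ell^2\to\ell^2},
\]
and the first term reduces to $\|\hat A(0)\|_{\mC_0}=\|\hat A(0)\|_{\ell^2\to\ell^2}$. Finally I would substitute the elementary identity $\|\hat A(l)\|_{\ell^2\to\ell^2}=\sup_{m\in\bbZd}|A(m,m-l)|$, valid because $\hat A(l)$ acts on $\ell^2$ as a weighted shift, which turns each block into $\sum_{2^k \leq |l| < 2^{k+1}}\sup_m|A(m,m-l)|$ and reproduces \eqref{eq:44} exactly.

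I do not expect a genuine obstacle: the argument is bookkeeping that pushes the abstract machinery through a single concrete algebra. The one point deserving a moment's attention is the \emph{exact} additivity of the $\mC_0$-norm over side-diagonal blocks used above; this is the feature peculiar to the $\ell^1$-type algebra $\mC_0$ (and would fail, for instance, for the Schur algebra), and it is precisely what lets the dyadic sum of the preceding Proposition collapse into the clean summability condition of \eqref{eq:44}. Because we work only at $p=\infty$, none of the integrability subtleties in the definition \eqref{eq:appspace} of $\mE^p_r$ arise. I would close by remarking that \eqref{eq:44} is the promised new off-diagonal decay condition, and that it is inherited under inversion since $\Lambda_r(\mC_0)$ is \IC\ in $\mC_0$ (Theorem~\ref{thm_HZ_is_ICBA}), which is \IC\ in $\bopzd$ (Proposition~\ref{fundamental}).
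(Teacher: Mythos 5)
Your proposal is correct and follows exactly the route the paper intends: the identity $\mE^\infty_r(\mC_0)=\Lambda_r(\mC_0)$ is the Jackson--Bernstein theorem (Theorem~\ref{prop:jacksonbernstein}) specialized to $\mC_0$ (where bandlimited elements are banded matrices), and condition \eqref{eq:44} is the dyadic-block characterization of $\Lambda_r$ for solid matrix algebras, made explicit by the $\ell^1$-additivity of the $\mC_0$-norm over side diagonals together with $\|\hat A(l)\|_{\ell^2\to\ell^2}=\sup_m|A(m,m-l)|$. Your remark that this exact additivity is the feature peculiar to $\mC_0$ is precisely the right point to flag.
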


We can now prove Theorem~\ref{thm:baskalgapp} of the introduction. Assume that the matrix $A$ satisfies (\ref{eq:44}) and is invertible on $\ell^2(\bbZd)$. By Theorem~\ref{thm:appbask}, $A \in \Lambda_r(\mC_0)$, and by Theorem~\ref{thm_HZ_is_ICBA} $\Lambda_r(\mC_0)$ is \IC\ in $\mC_0$ which in turn is \IC\ in \bopzd. 
Thus $A \inv \in \Lambda_r(\mC_0)$ and  $A \inv$ also satisfies the conditions~(\ref{eq:44}).

The characterization~\eqref{eq:44} implies the embeddings
\[
\mC_r \subset \Lambda_r(\mC_0) \subset \mC_s \quad \text{for } 0\leq s < r \, ,
\]
and it is easy to show that the three algebras are distinct.

The characterization of \HZ\ spaces in the Jaffard class is even simpler. The \LP\ characterization of
$\Lambda_r(\mJ _s)$ implies that
\begin{eqnarray*}
A \in \Lambda_r(\mJ _s) &\Leftrightarrow & \sup _{k\geq 0} 2^{rk}
\|\sum_{2^k \leq \abs l < 2^{k+1}}\hat A       (l) \|_{\mJ _r }
<\infty  \\
&\Leftrightarrow &    \norm{\hat A(l)}_{\mJ _s} \abs{l}^r \leq C' \\
& \Leftrightarrow &  \|\hat A (l)\|_{\ell^2 \to \ell^2}
(1+|l|)^{s+r } < C \text{ for all } l \Leftrightarrow A \in \mJ_{r+s} \, .
  \end{eqnarray*}
Thus we have another proof of the identification $ \Lambda_r(\mJ_s)=\mJ_{s+r}$ of
Proposition~\ref{prop:Jaffard-char}.
\\

\def\cprime{$'$} \def\cprime{$'$}


\end{document}